\newcommand{\R}{\mathbb{R}}
\newcommand{\E}{\mathbb{E}}
\newcommand{\N}{\mathbb{N}}
\newcommand{\1}{\mathbf{1}}
\newcommand{\normal}{\mathcal{N}}
\newcommand{\C}{\mathcal{C}}
\newcommand{\D}{\mathcal{D}}
\renewcommand{\hat}{\widehat}
\renewcommand{\tilde}{\widetilde}
\renewcommand{\d}{\ensuremath {\,\mathrm{d}}}
\renewcommand{\vec}[1]{{\boldsymbol{#1}}}
\renewcommand{\P}{\mathbb{P}}
\newtheorem{lemma}{Lemma}
\newtheorem{proposition}{Proposition}
\newtheorem{remark}{Remark}
\newtheorem{definition}{Definition}
\newtheorem{corollary}{Corollary}
\title{Adaptive Sensing for Estimation of Structured Sparse Signals}
\author{Ervin T\'{a}nczos and Rui~M.~Castro
\thanks{The authors are with the Department of Mathematics, Eindhoven University of Technology, 5600 MB Eindhoven,
The Netherlands (email \url{e.t.tanczos@tue.nl} and \url{rmcastro@tue.nl}). This work was partially supported by NWO Grant 613.001.114.}}
\begin{document}

\maketitle

\begin{abstract}
In many practical settings one can sequentially and adaptively guide the collection of future data, based on information extracted from data collected previously. These sequential data collection procedures are known by different names, such as \emph{sequential experimental design}, \emph{active learning} or \emph{adaptive sensing/sampling}. The intricate relation between data analysis and acquisition in adaptive sensing paradigms can be extremely powerful, and often allows for reliable signal estimation and detection in situations where non-adaptive sensing would fail dramatically.
In this work we investigate the problem of estimating the support of a structured sparse signal from coordinate-wise observations under the adaptive sensing paradigm. We present a general procedure for support set estimation that is optimal in a variety of cases and shows that through the use of adaptive sensing one can: (i) mitigate the effect of observation noise when compared to non-adaptive sensing and, (ii) capitalize on structural information to a much larger extent than possible with non-adaptive sensing. In addition to a general procedure to perform adaptive sensing in structured settings we present both performance upper bounds, and corresponding lower bounds for both sensing paradigms.
\end{abstract}


\section{Introduction} \label{sec:intro}

Consider the problem of estimating the support set of an unknown vector (which we refer to as a signal) through noisy coordinate wise measurements. Such a problem may arise in various contexts, ranging from gene expression studies (where one tries to identify genes that are differentially expressed under some specific condition) or network monitoring (where one wishes to either detect or locate anomalous elements in a network). Under non-adaptive sensing paradigms, the most natural way to collect data is to measure each coordinate of the vector with the same accuracy (that is, provided each coordinate of the vector is equally likely to be in the support set). However, what if we have the additional flexibility of also choosing the precision and location of each measurement based on the data collected so far? It is not immediately clear how much can be gained by these adaptive sensing strategies over the non-adaptive ones.

In this paper we consider sparse signals, meaning the size of the support set is significantly less than the total number of components of the signal. Sparsity has enjoyed quite a lot of success as a modeling tool \cite{efron:07,donoho:98,genovese:09} . However, in addition to this sparsity assumption one might consider further structural restrictions on the unknown support set. For instance, in network anomaly monitoring one may assume anomalous behavior ``radiates'' from certain networks elements, giving rise to star-shaped patterns on the network graph. In gene expression studies one may expect certain groups of genes that are involved in certain biological mechanisms to share common expression levels among different individuals with the same type of disease. In this case the support set of the gene-expression matrix is a submatrix. Can we use such additional structural knowledge to further increase the performance of support set estimation algorithms? If so, to what extent does such structural knowledge help us? In this work we aim to address the questions above in the framework considered in \cite{DS_Haupt_2011} and \cite{AS_Rui_2012}. In particular we shed light on how adaptive sensing can capitalize on structural information, by providing general and practical algorithms endowed with performance guarantees. Furthermore we show that these algorithms are essentially optimal as we give matching performance lower bounds.

Inference methods where data collection is sequential and adaptive have been studied in many different disciplines. In statistics, these are often referred to as sequential design of experiments \cite{SLRT_Wald_1945,Seq_Chernoff_1959,Fedorov_1972,El-Gamal_1991,Hall_2003,Blanchard_2005,Seq_Testing_Limits_Malloy_2011,Seq_Testing_Malloy_2011}. In machine learning and computer science, these methodologies are often known as active learning, and are of great importance in applied settings \cite{Power_Novak_1996,Cohn_1996,Dasgupta_2005,Sample_Comp_Bounds_Dasgupta_2005,Perceptron_Based_Dasgupta_2005,Willett_2005,Minimax_Castro_2008,Balcan_2009,Koltchinskii_2010,Hanneke_2011}. The effect that structural information regarding the unknown parameter of interest has on statistical inference is an important question, and was studied in e.g. \cite{Trail_Arias_2008,Comb_Testing_Lugosi_2010,Butucea_2011} in non-adaptive sensing settings, and investigated for a particular class of structural information in \cite{Balakrishnan_2012} in an adaptive sensing setting.

The paper is structured as follows. Section~\ref{sec:setting} describes the framework we are considering in this work. In Section~\ref{sec:procedure} we introduce a general procedure for support set estimation. We analyze the performance of the procedure in Section~\ref{sec:performance}. The performance limitations of any support estimation recovery procedure is investigated in Section~\ref{sec:lower_bounds}. Finally we provide concluding remarks in Section~\ref{sec:conclusion}.


\section{Problem Setting} \label{sec:setting}

Let $\vec{x}=(x_1,\ldots,x_n)\in\R^n$ be a vector of the form
\begin{equation}\label{eqn:signal_model}
x_i= \left\{ \begin{array}{ll}
\mu & \textrm{if $i \in S$}\\
0 & \textrm{if $i \notin S$}
\end{array}\right. \ ,
\end{equation}
where $\mu>0$, and $S$ is a subset of $\{1,\ldots,n\}$. We refer to the vector $\vec{x}$ as the \emph{signal}, and to $S$ as the \emph{signal support} or the \emph{significant components} of the signal. The latter is our main object of interest, as neither $\vec{x}$ and $S$ are directly available. We are allowed to collect multiple noisy measurements of each individual component of $\vec{x}$, namely
$$Y_j = x_{A_j} + \Gamma_j^{-1/2} W_{j}\ , \  j=1,2,\ldots\ ,$$
where $j$ indexes the $j$th measurement. For each measurement we can choose $A_j$, the entry of $\vec{x}$ to be measured, and the corresponding \emph{precision} of the measurement $\Gamma_j> 0$. Finally $W_{j} \sim \normal(0,1)$ are independent and identically distributed standard normal random variables. Also for any given $j$, $W_j$ is independent of $\{A_i,\Gamma_i\}_{i=1}^{j}$. Under the adaptive sensing paradigm $A_j$ and $\Gamma_j$ are allowed to be functions of the past observations $\{Y_i,A_i,\Gamma_i\}_{i=1}^{j-1}$. This model is only interesting if one includes some constraint on the total amount of precision available. Let $\P_S$ denote the joint probability distribution of $\{Y_i,A_i,\Gamma_i\}_{i=1}^{\infty}$ and $\E_S$ denote the expectation with respect to $\P_S$. In this paper we require that
\begin{equation}\label{eqn:budget}
\E_S \left(\sum_{j} \Gamma_j \right) \leq m\ ,
\end{equation}
where $m$ is our total precision budget, given in advance. This constraint arises naturally in many practical settings, and can be viewed as a total measurement time constraint in sensing modalities where precision is directly proportional to the amount of time necessary to collect a measurement (see for instance \cite{sampta:13}). Finally, using the collected data we construct an estimator $\hat S\equiv \hat S(\{Y_i,A_i,\Gamma_i\}_{i=1}^{\infty})$ that is desirably as close to $S$ as possible.

At a first glance it might seem that the model \eqref{eqn:signal_model} is overly restrictive, as all the significant components of $\vec{x}$ have exactly the same value $\mu$. However, the results in this paper can be generalized to sparse signals with non-zero significant components of arbitrary signs and magnitudes, provided the minimum magnitude of these is large enough. For the sake of clarity and simplicity we do not consider this extension here, but refer the reader to \cite{AS_Rui_2012} for details on how this can be done.

In this work the primary focus is on adaptive sensing algorithms. However, for comparison purposes we will also consider non-adaptive sensing inference, which means $\{A_i,\Gamma_i\}_{i=1}^{\infty}$ must be chosen before any observations are collected. In other words, non-adaptive sensing requires $\{A_i,\Gamma_i\}_{i=1}^{\infty}$ to be statistically independent from $\{Y_i\}_{i=1}^{\infty}$.


\subsection{Inference Goals} \label{sec:goals}

Since our goal is to characterize the fundamental limitations of adaptive sensing, we will assume $\mu$ is known, in addition to $n$ and $m$. Therefore the only unknown quantity is the signal support $S$. Our aim is to construct adaptive sensing methodologies that are able to estimate $S$. This is only possible if the signal magnitude $\mu$ is large enough. Furthermore, it is reasonable and desirable to make some concrete assumptions about $S$, namely that the signal has a sparse support meaning the cardinality of $S$ is small, and also extra structural assumptions. All these can be formalized by assuming $S$ belongs to some class $\C$ of subsets of $\{1,\ldots,n\}$.

There are various ways one can define \emph{reliable} estimation of a support set \cite{sampta:13}. In this work we consider the worst-case Hamming-distance as an error metric of our estimator. Let $\hat S\equiv \hat S(\{Y_i,A_i,\Gamma_i\}_{i=1}^{\infty})$ be a specific estimator. We wish to ensure that for a given $\varepsilon>0$, 
\begin{equation} \label{eqn:goal}
\max_{S \in \C} \E_S (|\hat S  \triangle S|) \leq \varepsilon \ ,
\end{equation}
where $\hat S \triangle S$ is the symmetric set difference of $\hat S$ and $S$, and $|\cdot|$ denotes the cardinality of a set. In words, we require the expected number of errors to be less than $\varepsilon$, regardless of the true unknown support set $S$.

One can also consider a slightly less stringent metric, namely the probability of falsely identifying the support set, that is $\P_S (\hat{S} \neq S)$. Note that we have
\begin{equation}\label{eqn:hamming_vs_prob}
\P_S (\hat{S} \neq S)\leq \E_S (|\hat S \triangle S|) \leq 2 |S| \ \P_S (\hat S \neq S) \ ,
\end{equation}
where second inequality holds provided $|\hat S|=|S|$ (this property holds for all the estimators we consider). According to this we are able to control the expected number of errors of a procedure by controlling the probability of error. This is exactly what we do, so the analysis of the procedure we propose will be applicable to both error metrics. In addition, we also derive lower bounds in terms of expected Hamming-distance. Whenever possible we also provide lower bound in terms of probability of error.

Naturally, the choice of support set class $\C$ plays a crucial role. The classes we consider in this paper fall into two categories: (i) all support sets of cardinality $s$, which we call \emph{$s$-sets}. This is the maximal class for a given level of sparsity $s$, thus we refer to this class, or the union of such classes with different values of $s$, as the \emph{unstructured case}. In contrast, other classes we consider are more stringent as the sets are structurally restricted. For instance, the class of $s$-intervals, which are sets of the form $\{i,i+1,\ldots,i+s-1\}$ for $i\in\{1,\ldots,n-s+1\}$. This class of sets is much smaller than the class of $s$-sets, and therefore we expect the support recovery task to be significantly easier. We use the umbrella term \emph{structured case} for such classes. In particular, we study the following classes:
\begin{itemize}
\item{\textbf{$s$-sets:} any subset of $\{1,\ldots,n\}$ with size $s$}
\item{\textbf{$s$-intervals:} sets consisting of $s$ consecutive elements of $\{1,\ldots,n\}$}
\item{\textbf{unions of $s$-intervals:} unions of $k$ disjoint $s$-intervals}
\item{\textbf{$s$-stars:} any star of size $s$ in a complete graph (where the edges of the graph are identified with $\{1,\ldots,n\}$)}
\item{\textbf{unions of $s$-stars:} unions of $k$ disjoint $s$-stars}
\item{\textbf{$s$-submatrices:} any submatrix of size $s$ of a $\sqrt{n}\times\sqrt{n}$ matrix}
\end{itemize}

\begin{table}[h]
\smaller
\begin{center}
\begin{tabular}{l|c|c|c}
& Non-Adaptive Sensing & \multicolumn{2}{c}{Adaptive Sensing}\\
& (necessary) & (necessary) & (sufficient)\\
\hline & & &\\
$s$-sets & $\mu \sim \sqrt{\log n}$ & $\mu \sim \sqrt{\frac{n}{m}\log s}$ & $\mu \sim \sqrt{\frac{n}{m}\log s}$\\
unions of $k$ disjoint $s$-intervals & $\mu \sim \sqrt{\frac{n}{sm}\log \frac{n}{ks}}$ & $\mu \sim \sqrt{\frac{n}{sm}\log ks}$ & $\mu \sim \sqrt{\frac{n}{sm}\log ks}$\\
unions of $k$ disjoint $s$-stars & $\mu \sim \sqrt{\frac{n}{m}\log \frac{\sqrt{n}}{ks}}$ & $\mu \sim \sqrt{\frac{n}{sm}\log ks}$ & $\mu \sim \sqrt{\frac{n}{sm}\log^2 ks}$\\
$s$-submatrices of an $\sqrt{n}\times\sqrt{n}$ matrix & $\mu \sim \sqrt{\frac{n}{\sqrt{s}m}\log \frac{n}{s}}$ & $\mu \sim \sqrt{\frac{n}{sm}\log s}$ & $\mu \sim \sqrt{\frac{n}{sm}\log^2 s}$\\
\end{tabular}
\end{center}
\caption{Summary of scaling laws for the signal magnitude $\mu$ (constants omitted) which are necessary/sufficient for $\max_{S\in\C} \E(\hat S\triangle S)\to 0$ as $n\to\infty$, where $\C$ denotes the corresponding class of support sets. All the results assume sparsity, meaning both $s=o(\sqrt{n})$ and $ks=o(\sqrt{n})$ and $n\to\infty$.}
\label{table}
\end{table}

Table~\ref{table} summarizes the results obtained in this paper, stated in terms of asymptotic behavior when the signal dimension $n$ is large and the support set (of size $s$ or $ks$) is small. Note that most results in the paper are not asymptotic in nature, and furthermore the constant factors in the scaling laws are also accounted for. Nevertheless the results become easier to state and interpret in asymptotic terms.

A first point to notice is that the necessary condition for non-adaptive sensing always includes a $\sqrt{\log n}$ factor, regardless of class considered. This factor is essentially due to the extreme value properties of Gaussian random variables. Note, however, that for adaptive sensing that factor is replaced by a $\sqrt{\log ks}$ term (where $k=1$ for the class of $s$-sets and $s$-submatrices). This means that adaptive sensing can always mitigate the effect of measurement noise. This is particularly interesting when $m=n$ (or more generally $m$ is proportional to $n$) meaning that one can make, on average, one measurement of precision one per signal entry. In that case the dependence on the extrinsic dimension $n$ vanishes completely when considering adaptive sensing, as opposed to non-adaptive sensing where the factor $\sqrt{\log n}$ is ever-present. However, the gains of adaptive sensing when structure is present can sometimes be much more remarkable. For discussion purposes consider the case $m=n$: for the class of unions of disjoint $s$-stars one gets that $\mu\sim \sqrt{\log n}$ is necessary for non-adaptive sensing, but if suffices that $\mu\sim\sqrt{(1/s)\log^2 ks}$ for adaptive sensing. Therefore, apart from logarithmic factors, there is also a factor $\sqrt{1/s}$ reduction on signal magnitude with adaptive sensing. This can be rather beneficial, for instance when $s\sim n^{\beta}$ for some $0<\beta<1/2$. These gains stem from the strong structural constraints in the class, which can be exploited by adaptive sensing strategies. However, as the cardinality of this class is still very large it renders the structural information almost useless for non-adaptive sensing. A similar situation happens for the $s$-submatrices class, although the gains there are less dramatic (apart from logarithmic factors there is a factor $s^{-1/4}$ reduction in signal magnitude). Finally, for the class of unions of $s$-intervals such structural gains are not present (although the logarithmic factors are still significantly improved). In summary, adaptive sensing can both remove the dependence on the extrinsic dimension $n$ due to noise (which is reflected in the logarithmic terms), and further improve the signal magnitude scaling laws (compared to non-adaptive sensing) when further structural information is present.

\begin{remark}
In this paper we consider only Gaussian observation noise. However, all the results in this paper can be generalized to non-Gaussian noise models, and will lead to different scaling laws. Nevertheless, the qualitative comparison between adaptive and non-adaptive sensing will remain essentially the same.
\end{remark}



\section{A General Adaptive Sensing Estimation Procedure} \label{sec:procedure}

At the core of the problem setting we described is the issue of noise and measurement uncertainly, which is embodied by the precision budget in \eqref{eqn:budget}. Without this restriction the inference task is much easier, as one can make noiseless (infinite precision) measurements. Nevertheless, a sharp distinction between adaptive and non-adaptive sensing is still present, meaning one can still devise powerful adaptive sensing procedures. This gives rise to a simple, yet very powerful idea: take the \emph{noiseless} adaptive sensing procedures and transform them to be robust to noise. Our general approach hinges precisely on this ``robustification'' of noiseless procedures, which we refer to as \emph{noiseless-case algorithms} or \emph{algorithms for the noiseless case}. When the noiseless case algorithm observes an entry of $\vec{x}$, we take multiple noisy measurements of that entry and perform a sequential hypothesis test to decide whether the entry in question is zero or not. Then we use the result of the test as a surrogate for the noiseless observation. If we ensure these tests have small enough probabilities of error, we can recover the support set with high probability. By carefully controlling these error probabilities we can also control the expected Hamming-distance performance of the devised estimator. To better illustrate the ideas we will make use of two running scenarios (corresponding to two different classes $\C$): (i) the class of \emph{$s$-sets}, that is when the support set $S$ is an arbitrary subset of $\{1,\ldots,n\}$ with cardinality $s$; the class of \emph{$s$-intervals}, that is all sets consisting of $s$ consecutive elements of $\{1,\ldots,n\}$.

\subsection{Noiseless-case algorithms}

An algorithm based on coordinate wise sampling for the noiseless case can be described as follows. At each time $t \in \N$, the algorithm either collects an observation of a coordinate of the signal vector $\vec{x}$, or stops and returns the estimate $\hat S$ for the support set $S \in \C$. The observation collected at time $t$ is denoted by $\tilde{Y}_t = \1 \{ x_{Q_t} \neq 0 \}$, where $Q_t \in \{ 1, \dots ,n \}$ determines the coordinate of $\vec{x}$ that we sample at time $t$, and $\1\{\cdot\}$ denotes the usual indicator function. We call $Q_t$ the \emph{query} at time $t$, and it plays a role analogous to that of $A_t$ in the problem description. In case the component indexed by $Q_t$ is a signal component the value of $\widetilde{Y}_t$ is 1, otherwise it is 0. Note that if $\1_S (.): \{ 1,\dots ,n \} \to \{ 0,1 \}$ denotes the indicator function of the support set $S$, the observations can be written as $\tilde{Y}_t = \1_S (Q_t)$. After taking a number of observations we may decide to stop and return the estimator of the support set. $T$ denotes the stopping time for the procedure and $\hat{S}$ denotes the estimate of the support set $S \in \C$.

To fully describe such an algorithm, one needs to give the queries $Q_t: t = 1,2, \ldots$, a stopping time $T$ and a rule for constructing $\hat S$. The query $Q_t: t = 1,2,\ldots$ is a measurable function of $\{Q_i,\tilde{Y}_i \}_{i=1}^{t-1}$ mapping to $\{1,\ldots,n\}$. It determines the coordinate of $\vec{x}$ that we wish to sample at time $t$. The query $Q_t$ can be random, so that randomized procedures can be considered. Note that because the observations are noiseless, it is unnecessary to sample any coordinate of $\vec{x}$ more than once, and therefore we only consider procedures satisfying this property.

The stopping time $T$ is the possibly random time at which we stop sampling and return an estimate of the support set. Thus $T$ is an $\N$ valued measurable function of the filtration generated by $\{Q_i,\tilde{Y}_i \}_{i=1,2,\ldots}$ which we define in the following way
\begin{equation}\label{def:T}
T= \inf \left\{ t:  \text{ exists at most one }  S' \in \C: \  \tilde{Y}_i = \1_{S'} (Q_i ) \ \forall i \in \{1,\ldots,t \} \right\} \ .
\end{equation}
This means that we consider procedures that stop sampling when there is a unique set in $\C$ that agrees with all the observations, or if there is no such set. Note that $T$ is well defined, and since it is unnecessary to sample any coordinate of $\vec{x}$ more than once in the noiseless case $T \leq n$. Furthermore in the noiseless case the procedure stops when there is exactly one set in $\C$ in line with our observations (since we assume $S \in \C$). However, we will later modify the procedure to be able to handle noise and thus there will be a chance of making errors. Because of this, it is possible that there is no set in $\C$ which is in agreement with all our observations. For this reason we enforce the procedure to stop when this happens to ensure it remains well-defined after the modification.

The estimator $\hat S$ is then defined in the noiseless case as the unique set in $S' \in \C$ which agrees with our observations. Since $T$ is well-defined, $\hat S$ is also well-defined in the noiseless case (and actually $\hat S=S$). However, to ensure that $\hat S$ remains well-defined after we modify this procedure to handle noise we define the estimator as
\begin{equation} \label{def:S_hat}
\hat S = \left\{ \begin{array}{ll}
S' & \text{if}\quad \exists ! \ S' \in \C: \ \tilde{Y}_t = \1_{S'} (Q_t ) \ \forall t \in \{ 1, \dots ,T \} \\
\emptyset & \text{otherwise} \\
\end{array} \right. \ .
\end{equation}

To illustrate how such a procedure may look like, consider the examples of \emph{$s$-sets} and \emph{$s$-intervals}. In the first case consider a deterministic procedure, which samples every coordinate one after the other. That is let $Q_t = t, \ \forall t=1,\ldots,T$. The procedure will stop once there is a unique $s$-set that agrees with our observations. This means it will stop when either it finds all the elements of $S$ or when $T=n-1$ (because the last element is uniquely determined by all the previous ones). Once sampling has stopped, the estimate of the support set is $\hat S=S'$, where $S' \in \C$ is the unique set satisfying $\tilde{Y}_t = \1_{S'} (Q_t ), \ \forall t \in \{ 1, \ldots ,T \}$. Next, consider the class of \emph{$s$-intervals}. Consider a randomized procedure consisting of two phases. In the first phase sample random coordinates of the vector $\vec{x}$ until a non-zero coordinate is found. In the second phase search for the left endpoint of the interval by sampling coordinates one after the other, which are to the left of the one containing signal that was found in the previous phase. The interval $S$ is exactly determined either when a 0 is found in the second phase, or when all the $s$ signal components are found. Formally, denoting by $\mathrm{Unif}$ the discrete uniform distribution, the procedure can be written as $Q_t \sim \mathrm{Unif}\left( \{1,\dots ,n \} \setminus \{ Q_1,\dots ,Q_{t-1} \} \right), \ \forall t \leq T'$, where
$$T' = \inf \left\{ t: \tilde{Y}_t =1 \text{ and } \tilde{Y}_j = 0 \quad \forall j=1,\dots ,t-1 \right\}\ ,$$
and $Q_t = Q_{t-1} -1, \ \forall t= T'+1,\dots ,T$. The estimator $\hat S$ is defined as before as the unique set compatible with the observations. Note that no claim is made about whether this procedure is optimal in any sense. In particular it is possible to construct a procedure which takes less number of steps in expectation than this one, for instance by performing a binary search in the second phase.

\subsection{From the noiseless to the noisy case}

Assume now that one has a noiseless-case procedure. The next step is to translate this procedure to the noisy case, to handle the situation when the observations are contaminated by noise \eqref{eqn:signal_model}, and there is a total precision budget \eqref{eqn:budget}. The main idea is to replace each query $Q_t$ by multiple observations of the entry of $\vec{x}$ indexed by $Q_t$, and perform a hypothesis test to assess whether the component corresponding to that query is zero or not. Specifically, we will set type I and type II error probabilities $\alpha_t$ and $\beta_t$ for each $Q_t$, perform a Sequential Likelihood Ratio Test (SLRT) with these error probabilities, and use its result as a surrogate for $\tilde{Y}_t$. How to properly choose the error probabilities $\alpha_t$ and $\beta_t$ depends on the specific problem at hand, but for now assume these are simply given to us.

The procedures we propose have the nice property that all observations are made with the same precision $\Gamma$, namely $\Gamma_j = \Gamma >0 \  \forall j \in \N$. This is not at all restrictive, provided $\Gamma$ is relatively small, as justified by Proposition~\ref{prop:SLRT}. For the first query $Q_1$ set the target type I and type II error probabilities to be $\alpha_1$ and $\beta_1$ respectively. The SLRT collects observations
$$Y_j = x_{Q_1} + \Gamma^{-1/2} W_{j}\ , \  j=1,\dots ,N_1 \ ,$$
where $N_1$ is an appropriate stopping time defined as follows. Let $f_0 (\cdot)$ and $f_1 (\cdot)$ denote the density of the observations when $Q_1\notin S$ and $Q_1\in S$ respectively. Define the likelihood ratio
\begin{equation}\label{eqn:loglikeratio}
\bar z_k = \sum_{j=1}^{k} \log \frac{f_1 (Y_j)}{f_0 (Y_j)}\ .
\end{equation}
The stopping time $N_1$ is defined as
$$N_1 = \inf \left\{ k \in \N: \bar z_k \notin (l_1, u_1)  \right\} \ ,$$
where $l_1 < 0 < u_1$ are chosen so that $\P\left( Z_{N_1} \geq u_1 | Q_1\notin S \right) \approx \alpha_1$ and $\P\left( Z_{N_1} \leq l_1 | Q_1\in S \right) \approx \beta_1$. Once $N_1$ observations have been collected a decision is made regarding whether or not $Q_t$ belongs to the support set. Namely we define the test function $\Psi_1$ as
$$\Psi_1 = \left\{ \begin{array}{ll}
0 & \text{ if } \bar z _{N_1} \leq l_1 \\
1 & \text{ if } \bar{z}_{N_1} \geq u_1
\end{array} \right. \ .$$
We use the value of $\Psi_1$ as a surrogate for $\tilde{Y}_1$ in the noiseless case procedure. This then determines the next query $Q_2$. Again we perform another SLRT by taking observations of the coordinate $\vec{x}_{Q_2}$. We set the type I and type II error probabilities to be $\alpha_2$ and $\beta_2$, determine upper and lower stopping boundaries $l_2,u_2$, perform the test resulting in $\Psi_2$ which we use as a surrogate for $\tilde{Y}_2$, and so on. We continue in this manner until the condition for the stopping time $T$ of the noiseless case procedure is met, and return the corresponding estimate $\hat S$. The whole procedure is summarized in Algorithm~\ref{algo}.

\begin{algorithm}[h]
\caption{General Adaptive Sensing Support Estimation}\label{algo}
\SetKw{KwParameters}{Input:}
\SetKw{KwInitialization}{Initialization:}
\SetKw{KwReturn}{Terminate:}
\KwParameters{}\\
$\quad$\textbullet\ A noiseless procedure characterized by: queries $Q_j,\  j=1,2,\dots$, stopping criterion $T$, and estimator $\hat{S}$\\
$\quad$\textbullet\ Precision parameter $\Gamma>0$\\
$\quad$\textbullet\ Type I and II error probabilities $\alpha_t$ and $\beta_t$ corresponding to query $Q_t$\\
\For{$t\leftarrow 1$ \KwTo $\cdots$}
	{
	 Perform an SLRT for entry $x_{Q_t}$ with error probabilities $\alpha_t,\beta_t$ resulting in $\Psi_t$\\
	 Set $\tilde{Y}_t = \Psi_t$\\
	 If $T = t$ stop and return $\hat S$\\
	}
\end{algorithm}


It is important to notice that the procedure is well defined. In particular, each of the SLRTs terminates with probability one, as shown in Proposition~\ref{prop:SLRT}. Furthermore, by the definition of $T$ (see \eqref{def:T}) the entire procedure is guaranteed to terminate with probability one, even if some of the SLRTs result in errors (meaning $\Psi_t \neq \textbf{1}_S (Q_t)$). Finally, the definition \eqref{def:S_hat} ensures $\hat S$ is also well defined in the event of errors.



\section{Performance Upper Bounds} \label{sec:performance}

In this section we use the procedure outlined in the previous section to characterize attainable inference limits in various settings. The SLRT is at the heart of our procedure, and therefore we begin by deriving some important properties these satisfy. We then move on to the analysis of the full procedure.


\subsection{Analysis of the SLRTs} \label{sec:SLRT}

Most tools used in our analysis stem from the seminal work in \cite{SLRT_Wald_1945}. However, some of these results have to be specialized for our setting. Consider a SLRT that we use to decide between the two simple hypotheses $H_0$ and $H_1$. We collect independent and identically distributed measurements $y_1, y_2,\ldots$, where $y_1 \sim \normal(0 , \Gamma^{-1})$ under $H_0$ and $y_1 \sim \normal(\mu , \Gamma^{-1})$ under $H_1$. We set as target type I and type II error probabilities $\alpha$ and $\beta$ respectively. These determine upper and lower stopping boundaries which we denote by $l=\log \frac{\beta}{1- \alpha}$ and $u=\log \frac{1-\beta}{\alpha}$. Recall the definition of the log-likelihood ratio in \eqref{eqn:loglikeratio}, and define the stopping time $N_{\Gamma}$ as
$$N_{\Gamma} = \inf \left\{ k \in \N: \ \bar z_k \notin (l, u)  \right\} \ ,$$
where $f_0$ and $f_1$ are the densities of $y_1$ under $H_0$ and $H_1$ respectively, and the subscript $\Gamma$ is meant to emphasize the dependence in $\Gamma$. Finally define the test $\Psi$ as
$$\Psi = \left\{ \begin{array}{ll}
0 & \text{ if } \bar z_{N_{\Gamma}} \leq l \\
1 & \text{ if } \bar z_{N_{\Gamma}} \geq u
\end{array} \right. \ .$$

We know from the theory of SLRTs that $\P\left(N_{\Gamma} < \infty \right) = 1$ (see \cite{SLRT_Wald_1945}), so it is guaranteed the data collection terminates almost surely. We also know that
$$\E_0 (N_{\Gamma}) \geq \frac{1}{- D( \P_0 \| \P_1 )} \left[ \alpha \log \frac{1- \beta}{\alpha} + (1- \alpha ) \log \frac{\beta}{1- \alpha} \right] \ ,$$
and
$$\E_1 (N_{\Gamma}) \geq \frac{1}{D( \P_1 \| \P_0 )} \left[ (1- \beta ) \log \frac{1- \beta}{\alpha} + \beta \log \frac{\beta}{1- \alpha} \right] \ ,$$
where $\P_0$ and $\P_1$ are the distributions of $y_1$ under $H_0$ and $H_1$ respectively, $\E_0$ and $\E_1$ are the expectations with respect to $\P_0$ and $\P_1$ respectively and $D(\cdot\|\cdot)$ is the Kullback-Leibler divergence of two distributions. Since $\P_0$ and $\P_1$ are normal distributions we have $D( \P_1 \| \P_0 ) = D( \P_0 \| \P_1 )=\Gamma \mu^{2} /2$ and therefore
\begin{equation}\label{eqn:prec_lower_0}
\Gamma \E_0 (N_{\Gamma}) \geq \frac{2}{\mu^{2}} \left[ \alpha \log \frac{\alpha}{1- \beta} + (1- \alpha ) \log \frac{1- \alpha}{\beta} \right]
\end{equation}
and
\begin{equation}\label{eqn:prec_lower_1}
\Gamma \E_1 (N_{\Gamma}) \geq \frac{2}{\mu^{2}} \left[ (1- \beta ) \log \frac{1- \beta}{\alpha} + \beta \log \frac{\beta}{1- \alpha} \right] \ .
\end{equation}
The derivation of these lower bounds goes roughly as follows. The cumulative log-likelihood $\bar{z}_k, \ k=1,2,\ldots$ is a discrete time stochastic process. The process terminates when it leaves the interval $(l, u)$. By assuming that the process exactly hits the boundaries of this interval we can get the lower bounds above. In reality the log-likelihood ratio will never be exactly equal to $l$ and $u$. However, when the precision $\Gamma$ is small, the increments to the stochastic process $\bar z_k$ are also small, and so this process will nearly hit the exact boundaries of the interval $(l,u)$. This in turn means the above lower bounds should be attainable when $\Gamma$ approaches zero. This is indeed the case, as stated in the following result, which is proved in the Appendix.

\begin{proposition} \label{prop:SLRT}
Let $\alpha_{\Gamma} = \P_0 (\Psi =1)$ and $\beta_{\Gamma} = \P_1 (\Psi =0)$ be, respectively, the type I and II error probabilities of the SLRT. Then
$$\alpha_{\Gamma} \to \alpha \quad\text{ and }\quad  \beta_{\Gamma} \to \beta$$
as $\Gamma \to 0$. Furthermore
$$\Gamma \E_0 (N_{\Gamma}) \to \frac{2}{\mu^{2}} \left[ \alpha \log \frac{\alpha}{1- \beta} + (1- \alpha ) \log \frac{1- \alpha}{\beta} \right]$$
and
$$\Gamma \E_1 (N_{\Gamma}) \to \frac{2}{\mu^{2}} \left[ (1- \beta ) \log \frac{1- \beta}{\alpha} + \beta \log \frac{\beta}{1- \alpha} \right]\ ,
$$
as $\Gamma \to 0$.
\end{proposition}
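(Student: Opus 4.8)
The plan is to analyze the overshoot of the random walk $\bar z_k$ at the stopping time $N_\Gamma$ and show it becomes negligible as $\Gamma\to 0$. The key observation is that under either hypothesis the increments of $\bar z_k$ are i.i.d.\ Gaussian: a direct computation with the normal densities gives $\log\frac{f_1(y)}{f_0(y)} = \Gamma\mu y - \Gamma\mu^2/2$, so under $H_0$ each increment is $\normal(-\Gamma\mu^2/2,\ \Gamma\mu^2)$ and under $H_1$ it is $\normal(\Gamma\mu^2/2,\ \Gamma\mu^2)$. Thus the single-step increment has mean and standard deviation both of order $\sqrt{\Gamma}$ (the standard deviation is $\mu\sqrt{\Gamma}$, the mean is $\Gamma\mu^2/2$), while the stopping boundaries $l$ and $u$ are fixed constants depending only on $\alpha,\beta$. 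First I would make this scaling explicit and note that, since individual steps shrink to zero while the target interval $(l,u)$ stays fixed, the maximal overshoot $\max\{\bar z_{N_\Gamma}-u,\ l-\bar z_{N_\Gamma}\}$ is stochastically dominated by something going to $0$.

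Next I would establish the convergence of the error probabilities. Wald's identity / the standard SLRT argument gives the exact relations
$$\alpha_\Gamma = \frac{e^{-l}-1}{e^{-l}-e^{-u}}\cdot(\text{correction for overshoot}), \qquad 1-\beta_\Gamma = \frac{1-e^{l}}{e^{u}-e^{l}}\cdot(\text{correction}),$$
or more cleanly: on $\{\Psi=1\}$ we have $\bar z_{N_\Gamma}\ge u$, and a change of measure yields $\alpha_\Gamma = \E_1[e^{-\bar z_{N_\Gamma}}\1\{\Psi=1\}]$; replacing $\bar z_{N_\Gamma}$ by $u$ (its value absent overshoot) gives exactly $\alpha = e^{-u}\P_1(\Psi=1)$-type bookkeeping that reproduces $u=\log\frac{1-\beta}{\alpha}$, $l=\log\frac{\beta}{1-\alpha}$. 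The point is that the discrepancy between $\alpha_\Gamma$ and $\alpha$ is controlled by $\E[\,|e^{-\text{overshoot}}-1|\,]$, which vanishes as $\Gamma\to0$ by the overshoot bound together with dominated convergence (the overshoot in one direction is at most one increment past the boundary plus the boundary gap, and has exponential moments since the increments are Gaussian). This gives $\alpha_\Gamma\to\alpha$ and $\beta_\Gamma\to\beta$.

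For the expected-sample-size statements I would use Wald's first identity, $\E_0(\bar z_{N_\Gamma}) = \E_0(N_\Gamma)\cdot\E_0(\text{one increment}) = -\frac{\Gamma\mu^2}{2}\E_0(N_\Gamma)$, hence $\Gamma\E_0(N_\Gamma) = -\frac{2}{\mu^2}\E_0(\bar z_{N_\Gamma})$, and similarly $\Gamma\E_1(N_\Gamma)=\frac{2}{\mu^2}\E_1(\bar z_{N_\Gamma})$. Now $\bar z_{N_\Gamma}$ equals $u$ with probability $\P_0(\Psi=1)=\alpha_\Gamma\to\alpha$ and $l$ with probability $1-\alpha_\Gamma\to1-\alpha$, up to the overshoot; so $\E_0(\bar z_{N_\Gamma}) \to \alpha u + (1-\alpha) l = \alpha\log\frac{1-\beta}{\alpha} + (1-\alpha)\log\frac{\beta}{1-\alpha}$, and multiplying by $-\tfrac{2}{\mu^2}$ gives precisely the claimed limit; the $H_1$ case is symmetric. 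I expect the main obstacle to be the rigorous control of the overshoot — specifically, showing $\E[\bar z_{N_\Gamma}\1\{\Psi=1\}] - u\,\P(\Psi=1) \to 0$ uniformly enough. The cleanest route is a renewal-theoretic bound: the overshoot past a fixed level for a random walk with step size $O(\sqrt\Gamma)$ is $O(\sqrt\Gamma)$ in expectation (e.g.\ via Lorden's inequality, whose bound is controlled by the second moment of the increment, here $O(\Gamma)$), which also supplies the uniform integrability needed to pass to the limit in Wald's identity. One must also check that $\P(N_\Gamma<\infty)=1$ is not enough — we need $\E(N_\Gamma)<\infty$ for Wald — but this is standard for SLRTs with a nonzero drift in the increments.
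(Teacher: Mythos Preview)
Your overall architecture matches the paper's proof exactly: both arguments combine Wald's identity $\E(\bar z_{N_\Gamma})=\E(N_\Gamma)\E(z_1)$ with a demonstration that the overshoot at the stopping boundary vanishes as $\Gamma\to 0$, and both handle $\alpha_\Gamma\to\alpha$ via the likelihood-ratio martingale identity (the paper writes it as $1=\E_0(e^{\bar z_N})$ rather than your change-of-measure form, but these are equivalent).

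There is, however, a concrete error in your overshoot control. Lorden's inequality bounds the expected overshoot by $\E[(z_1^+)^2]/\E[z_1]$, not by the second moment alone. Here the increment $z_1$ has variance $\Gamma\mu^2$ but drift only $\Gamma\mu^2/2$, so Lorden's bound is $\frac{\Gamma\mu^2(1+o(1))}{\Gamma\mu^2/2}\to 2$, which is $O(1)$ and does \emph{not} vanish. Your claim that ``the bound is controlled by the second moment of the increment, here $O(\Gamma)$'' overlooks the division by the drift, and so the specific tool you invoke does not deliver the $O(\sqrt\Gamma)$ overshoot you need.

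The paper closes this gap with a direct, Gaussian-specific lemma: conditioning on exit through the lower boundary, it shows
\[
l+\E_0(z_1\mid z_1\le 0)\ \le\ \E_0(\bar z_N\mid \bar z_N\le l)\ \le\ l,
\]
(with analogous sandwich bounds for $\E_0(e^{\bar z_N}\mid\bar z_N\le l)$ and $\E_0(e^{\bar z_N}\mid\bar z_N\ge u)$), proved by a shift/monotonicity argument for truncated normal densities. Since $z_1$ has standard deviation $\mu\sqrt\Gamma$, one gets $|\E_0(z_1\mid z_1\le 0)|\le \sqrt{\E_0(z_1^2)}/\P_0(z_1\le 0)=O(\sqrt\Gamma)\to 0$, which is precisely the overshoot bound you were aiming for. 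In short: your heuristic that a single step is $O(\sqrt\Gamma)$ and hence so is the overshoot is correct, but the rigorous justification is this conditional-increment comparison, not Lorden. Once you substitute this lemma for your Lorden step, the rest of your argument goes through unchanged.
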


This result shows that, provided the precision of each individual measurement is relatively small we have a precise characterization of the total expected precision used by the SLRT. Furthermore it shows that the lower bounds on the expected amount of precision used by the SLRT with error probabilities $\alpha , \beta$ can be achieved in the limit when $\Gamma \to 0$. Thus, when analyzing the performance of our procedures in terms of expected precision used we can use these lower bounds to calculate the expected precision used by the SLRTs. Note that we are interested in the case when $\alpha$ and $\beta$ are small. Thus, to make the discussion even more transparent we note that when $\alpha$ and $\beta$ are both at most 1/2 we have
\begin{equation} \label{eqn:prec_upper_0}
\frac{2}{\mu^{2}} \Big[ \alpha \log \frac{\alpha}{1- \beta} + (1- \alpha ) \log \frac{1- \alpha}{\beta} \Big] \leq \frac{2}{\mu^{2}} \log \frac{1}{\beta}
\end{equation}
and
\begin{equation} \label{eqn:prec_upper_1}
\frac{2}{\mu^{2}} \Big[ (1- \beta ) \log \frac{1- \beta}{\alpha} + \beta \log \frac{\beta}{1- \alpha} \Big] \leq \frac{2}{\mu^{2}} \log \frac{1}{\alpha} \ .
\end{equation}
When $\alpha$ and $\beta$ are approximately zero, the inequalities above are essentially tight. In what follows we will use the quantities on the right hand sides when calculating the expected precision used by a SLRT. By the previous proposition if we choose $\Gamma$ to be small enough, these quantities upper bound the expected precision used by the SLRT.


\subsection{General Analysis of Algorithm~\ref{algo}} \label{sec:analysis_procedure}

Now we turn our attention to the analisys of the general procedure of Section~\ref{sec:procedure}. Recall that a procedure for the noiseless case is characterized by queries $Q_t, \ t=1,2,\dots$, a stopping time $T$ which indicates the time when we stop sampling, and the estimator $\hat{S}$. In what follows we consider always the definition of the two last quantities given by \eqref{def:T} and \eqref{def:S_hat}. The queries $Q_t$ will be defined separately for each special case.

Given a certain noiseless case procedure we translate it to the noisy case by replacing each noiseless query $Q_t$ by a surrogate SLRT $\Psi_t$. This requires the specification of type I and type II error probabilities $\alpha_t$ and $\beta_t$ for each of the tests $\Psi_t, \ t=1,2,\ldots$. Naturally, $\alpha_t$ and $\beta_t$ can be, in general, functions of $\{Q_i ,\Psi_i\}_{i=1}^{t-1}$, and we wish to choose them to ensure that the final estimator $\hat S$ satisfies $\E_S (|\hat{S} \triangle S|) \leq \varepsilon, \ \forall S \in \C$ on one hand, and that the total precision budget \eqref{eqn:budget} is not exceeded. Clearly to meet the former goal $\alpha_t$ and $\beta_t$ need to be small enough, while if these are too small the latter goal might not be attained. Therefore we need to make a compromise in setting the error probabilities $\alpha_t, \beta_t$. How to optimally choose $\alpha_t, \beta_t, \ t=1,2,\dots$ depends on the specific procedure under consideration (and class of possible support sets), and it is difficult to get a general answer. However, we will see that in many interesting cases simple and intuitive choices for $\alpha_t, \beta_t$ yield near optimal results.

We illustrate the analysis of the procedure by first considering the unstructured case of all $s$-sets. In the unstructured case the near optimal procedure is very simple, and our choice of $\alpha_t ,\beta_t$ does not depend on $t$, which greatly facilitates the analysis. Formally the class of $s$-sets is defined as
$$\C = \left\{ S \subseteq \{ 1,\dots ,n \} : \ |S|=s \right\} \ .$$
A simple procedure for the noiseless case is defined by taking $Q_t = t, \ t=1,2,\ldots,T$. Because of the sparsity of the signal we expect the majority of the coordinates we sample to be zero, and we know that there are exactly $s$ that are non-zero. So it is sensible to take $\alpha_t \approx \varepsilon /n$ and $\beta_t \approx \varepsilon /s$. We will take the following concrete choice
$$\alpha_t = \varepsilon /2n \quad\text{ and }\quad \beta_t = \varepsilon /2s\ , t=1,2,\ldots\ .$$

In the worst case, for any $S \in \C$ we query all the entries of $\vec{x}$. Using this crude upper bound we get
\begin{align*}
\E_S (|\hat{S} \triangle S|) &\leq \sum_{t=1}^{T} \P_S (\Psi_t \neq \textbf{1}_S(Q_t)) \\
&\leq \sum_{t \notin S} \alpha_t + \sum_{t \in S} \beta_t \\
&\leq n \frac{\varepsilon}{2n} + s \frac{\varepsilon}{2s} \leq \varepsilon  \ .
\end{align*}
Since the inequality above holds for all $S \in \C$ we conclude that the expected number of errors for any $S \in \C$ is at most $\varepsilon$. Furthermore the total amount of precision used by this procedure is
\begin{align*}
\E_S \left( \sum_{j} \Gamma_j \right) &\leq \displaystyle{\sum_{t \notin S}} \frac{2}{\mu^{2}} \log \frac{2s}{\varepsilon} + \displaystyle{\sum_{t \in S}} \frac{2}{\mu^{2}} \log \frac{2n}{\varepsilon}\\
&\leq \frac{2n}{\mu^{2}} \log \frac{2s}{\varepsilon} + \frac{2s}{\mu^{2}} \log \frac{2n}{\varepsilon} \ ,
\end{align*}
where we used \eqref{eqn:prec_upper_0} and \eqref{eqn:prec_upper_1}, and take $\Gamma$ small. Note that the total amount of precision increases if the signal magnitude $\mu$ decreases. Combining this result with the bound on the total precision available \eqref{eqn:budget} we can characterize the conditions on $\mu$ for which this procedure fits all the requirements outlined in Section~\ref{sec:setting}.
\begin{proposition} \label{prop:vanilla_upper}
Let $\C$ denote the class of all $s$-sets, and suppose
\begin{equation}\label{eqn:mumu}
\mu \geq \sqrt{\frac{2n}{m} \log \frac{2s}{\varepsilon} + \frac{2s}{m} \log \frac{2n}{\varepsilon}} \ .
\end{equation}
Then the estimator $\hat S$ resulting from the procedure above satisfies $\displaystyle{\max_{S \in \C}} \ \E_S (|\hat{S} \triangle S|) \leq \varepsilon$, and the precision budget of \eqref{eqn:budget}.
\end{proposition}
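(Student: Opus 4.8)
The plan is to run Algorithm~\ref{algo} with the deterministic noiseless-case procedure $Q_t=t$, the stated constant error probabilities $\alpha_t=\varepsilon/2n$, $\beta_t=\varepsilon/2s$, and a common per-measurement precision $\Gamma$ taken small, and then to verify the two requirements of Section~\ref{sec:setting} separately: $\E_S(|\hat S\triangle S|)\le\varepsilon$ for every $S\in\C$, and the budget $\E_S(\sum_j\Gamma_j)\le m$.

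For the error bound I would first establish a deterministic claim about the noiseless machinery in \eqref{def:T}--\eqref{def:S_hat}: on every sample path, $|\hat S\triangle S|\le 2\,\#\{t\le T:\Psi_t\ne\1_S(Q_t)\}$. Since the queries are $1,2,3,\dots$ in order, the stopping rule \eqref{def:T} fires either when the running count of recorded ones first reaches $s$ --- then $\hat S$ is exactly that set of recorded ones --- or when the running count of recorded ones plus the number of not-yet-queried coordinates first drops to $s$ --- then $\hat S$ is the recorded ones together with all remaining coordinates; in both cases $|\hat S|=s$. A short bookkeeping step (comparing $|\hat S\setminus S|$ with the number of false positives among the performed tests in the first case, and with the number of false negatives in the second) charges every coordinate of $\hat S\triangle S$ to an erroneous SLRT, giving the claim. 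Taking expectations, using $T\le n$, and union-bounding over the $\le n-s$ null queries (each wrong with probability converging to $\alpha_t$ by Proposition~\ref{prop:SLRT}) and the $s$ signal queries (each wrong with probability converging to $\beta_t$) yields $\E_S(|\hat S\triangle S|)\lesssim (n-s)\alpha_t+s\beta_t$, which is $\le\varepsilon$ up to a constant factor of $2$; this factor I would absorb by slightly lowering the targets (e.g.\ replacing the $2$'s in $\alpha_t,\beta_t$ by $4$'s), which affects only the numerical constants in \eqref{eqn:mumu}.

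For the budget, I would invoke Proposition~\ref{prop:SLRT}: with $\Gamma$ small enough, the expected precision of the SLRT at a null coordinate is at most $\tfrac{2}{\mu^2}\log\tfrac{1}{\beta_t}=\tfrac{2}{\mu^2}\log\tfrac{2s}{\varepsilon}$ by \eqref{eqn:prec_upper_0}, and at a signal coordinate at most $\tfrac{2}{\mu^2}\log\tfrac{1}{\alpha_t}=\tfrac{2}{\mu^2}\log\tfrac{2n}{\varepsilon}$ by \eqref{eqn:prec_upper_1}. Summing over the $\le n-s$ null queries and $s$ signal queries gives $\E_S(\sum_j\Gamma_j)\le\tfrac{2n}{\mu^2}\log\tfrac{2s}{\varepsilon}+\tfrac{2s}{\mu^2}\log\tfrac{2n}{\varepsilon}$, and comparing with \eqref{eqn:budget} shows the constraint holds precisely when $\mu$ obeys \eqref{eqn:mumu}. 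This completes the argument.

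The one genuine obstacle is the deterministic Hamming bound of the second paragraph: it is where the exact form of $T$ and $\hat S$ matters, and some care is needed because an SLRT error can displace a true signal coordinate that the procedure never queries, so a single error may already cause two Hamming errors --- this is the source of the constant $2$. Two routine caveats should be noted as well: Proposition~\ref{prop:SLRT} delivers the target error probabilities and the precision expressions only in the limit $\Gamma\to0$, so strictly the SLRT targets are set slightly below $\varepsilon/2n$ and $\varepsilon/2s$ and $\Gamma$ is then taken small; and bounding $T$ by $n$ is legitimate for all the sums above, so no finer control of $T$ is required here.
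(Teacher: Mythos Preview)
Your approach is the paper's: the same noiseless procedure $Q_t=t$, the same targets $\alpha_t=\varepsilon/2n$, $\beta_t=\varepsilon/2s$, and an identical precision computation via \eqref{eqn:prec_upper_0}--\eqref{eqn:prec_upper_1}. The only substantive difference is in the Hamming step. The paper simply writes
\[
\E_S(|\hat S\triangle S|)\ \le\ \sum_{t=1}^{T}\P_S\big(\Psi_t\ne\1_S(Q_t)\big)\ \le\ \sum_{t\notin S}\alpha_t+\sum_{t\in S}\beta_t\ \le\ \varepsilon,
\]
that is, it bounds the expected Hamming distance directly by the expected number of SLRT errors without any factor of $2$. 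You instead prove the pointwise bound $|\hat S\triangle S|\le 2\,\#\{t\le T:\Psi_t\ne\1_S(Q_t)\}$ and then halve the targets, which changes the $2$'s inside the logarithms of \eqref{eqn:mumu} to $4$'s.

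Your extra care here is actually justified: for the early-stopping estimator of \eqref{def:T}--\eqref{def:S_hat} a single false positive can displace an unqueried signal coordinate, so $|\hat S\triangle S|$ can genuinely be twice the number of SLRT errors (e.g.\ $n=4$, $s=2$, $S=\{3,4\}$, a sole error $\Psi_1=1$ gives $T=3$, $\hat S=\{1,3\}$). The paper's first displayed inequality therefore does not hold pointwise for that procedure and is a small slip. If you want the constants exactly as stated in \eqref{eqn:mumu}, the clean fix is not to shrink $\alpha_t,\beta_t$ but to drop early stopping for this class: query every coordinate and set $\hat S=\{t:\Psi_t=1\}$. Then $|\hat S\triangle S|$ equals the number of SLRT errors on the nose, the paper's chain of inequalities is valid verbatim, and the budget calculation is unchanged.
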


Since $s \leq n$ the first term on the right hand side of \eqref{eqn:mumu} is always as large as the second term. Thus the scaling of $\mu$ as a function of $n, m, s$ and $\varepsilon$ is determined by the first term. Therefore we have the following corollary.
\begin{corollary}[$s$-sets]\label{cor:vanilla_upper}
Consider the setting of Proposition~\ref{prop:vanilla_upper}. Whenever
$$\mu \geq \sqrt{\frac{4n}{m} \log \frac{2s}{\varepsilon}} \ ,$$
the procedure above produces an estimator $\hat{S}$ satisfying $\displaystyle{\max_{S \in \C}} \ \E_S (|\hat{S} \triangle S|) \leq \varepsilon$, and the precision budget of \eqref{eqn:budget}.
\end{corollary}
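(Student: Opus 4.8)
The plan is to deduce the corollary directly from Proposition~\ref{prop:vanilla_upper}: it suffices to check that the hypothesis $\mu \geq \sqrt{(4n/m)\log(2s/\varepsilon)}$ implies the (less restrictive) condition \eqref{eqn:mumu}, since then the two conclusions of the proposition — namely $\max_{S\in\C}\E_S(|\hat S\triangle S|)\leq\varepsilon$ and the precision budget \eqref{eqn:budget} — carry over verbatim. All quantities involved are nonnegative (note $\varepsilon$ is small, so $\log(2s/\varepsilon)>0$), so after squaring it is enough to establish
\begin{equation*}
\frac{4n}{m}\log\frac{2s}{\varepsilon} \;\geq\; \frac{2n}{m}\log\frac{2s}{\varepsilon} + \frac{2s}{m}\log\frac{2n}{\varepsilon}\ ,
\end{equation*}
which, after cancelling $(2n/m)\log(2s/\varepsilon)$ from both sides and multiplying by $m/2$, reduces to the single scalar claim $n\log\frac{2s}{\varepsilon} \geq s\log\frac{2n}{\varepsilon}$.

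The second step is to prove this scalar inequality from $s \leq n$. Dividing by $ns$, it is equivalent to $\phi(s) \geq \phi(n)$ where $\phi(t) = t^{-1}\log(2t/\varepsilon)$. Differentiating gives $\phi'(t) = t^{-2}\bigl(1 - \log(2t/\varepsilon)\bigr)$, which is nonpositive as soon as $2t/\varepsilon \geq e$; hence in the regime of interest ($\varepsilon$ small, say $\varepsilon \leq 2/e$) the function $\phi$ is non-increasing on $[1,\infty)$, so $\phi(s)\geq\phi(n)$ for all $1\leq s\leq n$ and the claim follows. (The borderline case $s=1$ can alternatively be checked by hand, reducing to $(n-1)\log(2/\varepsilon)\geq\log n$, which holds since the left side grows linearly and the right side only logarithmically in $n$.) This is exactly the assertion made in the paragraph preceding the corollary that ``the first term on the right hand side of \eqref{eqn:mumu} is always as large as the second term.''

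Combining the two steps, $\mu \geq \sqrt{(4n/m)\log(2s/\varepsilon)}$ implies \eqref{eqn:mumu}, and Proposition~\ref{prop:vanilla_upper} then gives both desired conclusions, completing the proof. There is no real obstacle here — the argument is essentially bookkeeping — and the only mildly delicate point I would double-check is the monotonicity of $\phi$, i.e.\ that the $n\log(2s/\varepsilon)$ term genuinely dominates $s\log(2n/\varepsilon)$ throughout $1\leq s\leq n$ and does not require an extra smallness assumption on $\varepsilon$ beyond what the paper already tacitly uses.
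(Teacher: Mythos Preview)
Your proposal is correct and follows exactly the paper's approach: the paper's entire argument is the single sentence ``Since $s \leq n$ the first term on the right hand side of \eqref{eqn:mumu} is always as large as the second term,'' and you have simply supplied the calculus verification of that claim via the monotonicity of $\phi(t)=t^{-1}\log(2t/\varepsilon)$. Your caveat about needing $\varepsilon$ small (e.g.\ $\varepsilon\leq 2/e$) is well taken and is something the paper glosses over; it is implicitly covered since the SLRT analysis already requires $\alpha_t,\beta_t\leq 1/2$, forcing $\varepsilon\leq s$, and the regime of interest is $\varepsilon\to 0$.
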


We know from \cite{AS_Rui_2012} that, apart from constants, this is the best performance we can hope for when considering the expected Hamming-distance of the estimator (when $s\ll n$). The procedure presented in \cite{malloy:ST} has essentially the same performance as this one, and is also a coordinate-wise method that it is based on sequential thresholding. However, it is parameter adaptive and agnostic about $s$ for a wide range of values.

We now turn our attention to a number of special cases, where the sets belonging to the class $\C$ have some sort of structure. As before, the starting point is some procedure for the noiseless case, specified by $Q_1,Q_2,\ldots,Q_T$. We will make no claim on whether the procedure we define for the noiseless case is optimal in any sense, although in most cases these do give rise to optimal scaling limits. All the noiseless procedures we consider share the common attribute that they consist of two phases. They begin with a \emph{search} phase, where one identifies the general spatial location of the support set. In this phase we sample components of $\vec{x}$ according to some searching method, until we find a certain number $l_1 \leq |S|$ of signal entries. Then we switch to a \emph{refinement} phase, where we exploit the structure of the support set to find a number of entries of $S$. In some cases the proposed procedures alternate between these two phases. Consider the following procedure for the class of $s$-intervals. The search phase simply scans the components until an element of $S$ is found, and the refinement phase explores the coordinates in the neighborhood of the element of $S$ found earlier.

The exact form of queries $Q_1 ,Q_2 ,\ldots$ depends on the specific class under consideration. Likewise, the number of search phases $K$ and how many components to find in each search phase $l_1 ,\dots ,l_K$ depends on the class of possible support sets. In the previous example for the $s$-intervals $K=1$ and $l_1=1$. In what follows we denote the total number of signal entries we wish to find throughout the search phases as $l=\sum_{k=1}^{K} l_k$.

To translate the noiseless-case procedures to the noisy case we  must specify $\alpha_t ,\beta_t$ for each test $\Psi_t ,\ t=1,2,\ldots,T$ to ensure the overall probability of error of the procedure is small. Afterwards we turn our attention to the expected precision used by the procedure. Combining the latter bound with the total amount of precision available \eqref{eqn:budget} we get a condition on the minimal signal strength $\mu$ which is sufficient to ensure the support is recovered accurately. For the control of the overall error probability we can take advantage of the two phases. Suppose we want to keep the probability of error to be less than $\delta$. First, note that since the noiseless case procedure does not sample any coordinate of $\vec{x}$ more than once, we perform at most $n$ tests, thus the conservative choice $\alpha_t \approx \delta /n, \ t=1,2,\ldots,T$ suffices. Now note that throughout the search phases we plan to encounter no more than $l$ non-zero coordinates of $\vec{x}$, so it is reasonable to set $\beta_t \approx \delta /l$. Finally, since there are at most $|S|$ significant components we can observe, in the refinement phase we take $\beta_t \approx \delta /|S|$.

It is crucial to note for a given $t$, $\alpha_t ,\beta_t$ are in general functions of $\{ Q_i ,\Psi_i \}_{i=1}^{t-1}$. This means that when defining the error probabilities we can only use the results of the tests carried out so far, but not the true identity of the entries we sampled. It is important to keep this in mind in the analysis of the procedure. Also, note that the choices above are likely not optimal. For instance the type I error probability will be the same throughout the procedure, and we do not take advantage of the two phases when setting it. Also, for some classes to be considered later on, one can immediately improve the $\alpha_t ,\beta_t$ of the next proposition (e.g. for the $s$-intervals we will perform at most $n/s$ tests in the first phase so setting $\alpha_t = s \delta /n$ for the search phase suffices). Nevertheless, these choices for the probabilities of type I and II errors are simple and general, and yield essentially optimal results.

\begin{proposition} \label{prop:error_prob_upper}
Suppose the noiseless case procedure is of the form described above, and let $\alpha_t = \delta /4n, \ t=1,2,\dots$, $\beta_t = \delta /2l$ for search phase and $\beta_t = \delta /4|S|$ for refinement phase. Then
$$\P_S (\hat{S} \neq S) \leq \delta , \ \forall S \in \C\ .$$
\end{proposition}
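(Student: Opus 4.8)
The plan is to show that $\hat S\neq S$ can only occur if one of the sequential tests $\Psi_t$ erred, and then to bound the probability of such an error by a union bound organised around the \emph{first} erroneous test, so as to respect the two-phase structure of the underlying noiseless procedure. Throughout I would use, as a standing fact, that the SLRT run with target error probabilities $\alpha_t,\beta_t$ has realized type~I and type~II error probabilities at most $\alpha_t$ and $\beta_t$; this is routine given Proposition~\ref{prop:SLRT} (take the common precision $\Gamma$ small enough, feeding the test slightly smaller targets if one wants a strict inequality).

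First I would establish the reduction. If along the run of Algorithm~\ref{algo} every test returns the correct value, i.e.\ $\Psi_t=\1_S(Q_t)$ for all $t\le T$, then the surrogate observations $\tilde Y_t=\Psi_t$ coincide with the noiseless observations $\1_S(Q_t)$, so by induction the queries $Q_t$, the stopping time $T$ (through \eqref{def:T}), and the estimate $\hat S$ (through \eqref{def:S_hat}) are precisely those of the noiseless procedure; since $S\in\C$, that procedure recovers $S$ exactly, so $\hat S=S$. Hence $\{\hat S\neq S\}\subseteq G:=\{\exists\,t\le T:\Psi_t\neq\1_S(Q_t)\}$. Moreover, since the noiseless query rule never samples a coordinate twice, neither does Algorithm~\ref{algo}, so $T\le n$ and $G$ involves at most $n$ tests.

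Next I would bound $\P_S(G)$. Let $\tau$ be the index of the first erroneous test (with $\tau=\infty$ if no performed test errs), so $G=\{\tau\le T\}$ and $\P_S(G)=\sum_{t=1}^{n}\P_S(\tau=t)$. On the event $\{\tau\ge t,\ t\le T\}$ the first $t-1$ tests were correct, hence up to step $t$ the algorithm has followed the noiseless run; in particular $Q_t$ and the phase to which step $t$ belongs are determined by the history $\{Q_i,\Psi_i\}_{i<t}$ and agree with the noiseless procedure. Conditioning on that history, the probability that step $t$ errs is at most $\alpha_t=\delta/4n$ if $Q_t\notin S$, at most $\beta_t=\delta/2l$ if $Q_t\in S$ and step $t$ is in a search phase, and at most $\beta_t=\delta/4|S|$ if $Q_t\in S$ and step $t$ is in the refinement phase. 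Summing over $t$ yields $\P_S(G)\le\frac{\delta}{4n}\,\E_S[N_0]+\frac{\delta}{2l}\,\E_S[N_{\mathrm s}]+\frac{\delta}{4|S|}\,\E_S[N_{\mathrm r}]$, where $N_0$, $N_{\mathrm s}$, $N_{\mathrm r}$ count, among the steps $t\le\min(\tau,T)$, those with $Q_t\notin S$, those with $Q_t\in S$ in a search phase, and those with $Q_t\in S$ in the refinement phase. Since no coordinate is queried twice, $N_0\le n$ and $N_{\mathrm r}\le|S|$ on every path; and $N_{\mathrm s}\le l$ because on the steps counted by $N_{\mathrm s}$ the procedure still follows the noiseless run, each such query returns $1$, and these are exactly the entries ``found'' in the search phases, which collectively stop after $l=\sum_k l_k$ entries have been found (the single query possibly made at time $\tau$ itself is harmless, since if $l$ entries were already found no step would be in a search phase). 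Plugging these in gives $\P_S(\hat S\neq S)\le\P_S(G)\le\tfrac{\delta}{4}+\tfrac{\delta}{2}+\tfrac{\delta}{4}=\delta$, uniformly in $S\in\C$.

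The main obstacle is precisely the bound $N_{\mathrm s}\le l$: the number of search-phase queries that land in $S$ is \emph{not} bounded by $l$ in general, since an erroneous SLRT that reports a signal entry as zero makes the search phase overshoot. This is what forces the union bound to be taken over the first error (equivalently, one works with the decreasing events ``no error so far and $t\le T$''), which is what makes the noiseless combinatorial counts $N_0\le n$, $N_{\mathrm s}\le l$, $N_{\mathrm r}\le|S|$ legitimate; everything else — the reduction in the first step and the per-test error bounds — follows directly from the construction and from Proposition~\ref{prop:SLRT}.
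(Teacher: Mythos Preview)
Your argument is correct and is essentially the same as the paper's: both reduce $\{\hat S\neq S\}$ to the event that some SLRT errs, then exploit that conditioned on no prior error the procedure follows the noiseless run, so the counts $N_0\le n$, $N_{\mathrm s}\le l$, $N_{\mathrm r}\le |S|$ apply. The only cosmetic difference is that the paper factorises the complementary event via the chain rule, obtaining $1-(1-\delta/4n)^n(1-\delta/2l)^l(1-\delta/4|S|)^{|S|}$ and then bounding this by the sum, whereas you decompose directly over the time $\tau$ of the first error; these are two equivalent ways of writing the same union bound, and your explicit discussion of why $N_{\mathrm s}\le l$ holds is a point the paper leaves implicit.
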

\begin{proof}
Consider a noiseless case procedure given by $Q_1,Q_2,\dots$, and any support set $S \in \C$. Let $\mathcal{E}_t$ denote the event that we make an error in the test $\Psi_t$, meaning $\Psi_t \neq \textbf{1}_S (Q_t)$. Let $\overline{\mathcal{E}}_t$ denote the complement of $\mathcal{E}_t$. In what follows we compute the probability that no errors are made.

The support set will be correctly identified if all tests are correct. So clearly
\begin{align*}
\P_S (\hat{S} \neq S) &= 1-\P_S (\hat{S} = S)\\
&\leq 1-\P_S\left(\bigcap_{t=1}^{T} \overline{\mathcal{E}_t} \right)\\
&= 1-\P_S\left(\overline{\mathcal{E}_1} \right)\P_S\left(\left.\overline{\mathcal{E}_2}\right| \overline{\mathcal{E}_1}\right)\cdots\P_S\left(\overline{\mathcal{E}_T}\left| \bigcap_{t=1}^{T-1} \overline{\mathcal{E}_t}\right.\right)\ .
\end{align*}
The above expression upper bounds the probability of error, by considering the case where all the test results coincide with the noiseless case. Since there are at most $n$ zero components being measured in the entire noiseless-case procedure, $l$ significant components being measured in the search phase, and at most $|S|$ significant components being measured in the refinement phase we conclude that
\begin{align*}
\P_S (\hat{S} \neq S) &\leq 1-(1-\delta /4n)^n (1-\delta /2l)^l (1-\delta /4|S|)^{|S|}\\
& \leq n \frac{\delta}{4n} + l \frac{\delta}{2l} + |S| \frac{\delta}{4|S|}\leq \delta\ .
\end{align*}
\end{proof}

This proposition ensures that the noisy case procedure has a probability of error that is sufficiently small. The next step is to evaluate the total expected precision used (considering that the precision $\Gamma$ of each measurement is arbitrarily small). This quantity depends highly on the noiseless case procedure we use for the specific class under consideration. For that reason this calculation is done separately for each case considered.


\subsubsection{$s$-intervals} \label{sec:int}

Consider the class of intervals of length $s$. Formally
\[
\C = \left\{ S \subseteq \{ 1,\dots ,n \} : \ S = \{ i,i+1, \dots ,i+s-1 \} , \ i=1,\dots ,n-s+1 \right\} \ .
\]
For sake of simplicity assume $n/s$ is an integer. This is merely to ease notation in the calculations that follow. The first step is to define a procedure for the noiseless case. Our choice consists of one search and one refinement phase. In the search phase we sample coordinates $1,s+1,2s+1,\dots$, until we find a non-zero coordinate. This gives us the approximate position of the interval. Then we move to the refinement phase to find the left endpoint of the interval by sampling coordinates of $\vec{x}$ to the left of the previously found non-zero coordinate. Note that in the second phase we query at most $s-1$ coordinates.

Formally $Q_t = (t-1)s +1$ for $t=1,\dots T'$, where $T' = \inf \{ t: \ \tilde{Y}_t = 1 \}$, and $Q_t = Q_{t-1} -1$ for $t = T',\dots ,T$, where $T$ is defined in general in \eqref{def:T}. The estimator $\hat{S}$ is defined in \eqref{def:S_hat} as usual. Note that this is an instance of the general procedure described in the setting of Proposition~\ref{prop:error_prob_upper} with $K=1$ and $l_1=l=1$. Taking the corresponding choices for $\alpha_t ,\beta_t , \ t=1,2,\dots$ we ensure that $\P_S (\hat{S} \neq S) \leq \delta , \ \forall S \in \C$. As for the expected precision we can conclude that
\begin{align*}
\E_S \left( \sum_{j} \Gamma_j \right) \leq {} & \E_S \left( \underbrace{\sum_{t=1}^{T'-1} \frac{2}{\mu^{2}} \log \frac{1}{\beta_t}+\frac{2}{\mu^{2}} \log \frac{1}{\min \{ \alpha_t ,\beta_t \}}}_{\text{search}} + \underbrace{\sum_{t=T'+1}^{T} \frac{2}{\mu^{2}} \log \frac{1}{\min \{ \alpha_t ,\beta_t \}}}_{\text{refinement}}  \right)\\
\leq {} & \frac{2}{\mu^2} \left(\frac{n}{s} \log \frac{2}{\delta} + s \log \frac{4n}{\delta} \right)\ .
\end{align*}

Combining this with the bound on the total precision available \eqref{eqn:budget} we get the following.
\begin{proposition} \label{prop:int_upper}
Let $\C$ denote the class of $s$-intervals, and suppose
\[
\mu \geq \sqrt{\frac{2n}{sm} \log \frac{2}{\delta} + \frac{2s}{m} \log \frac{4n}{\delta}} \ .
\]
Then the procedure above results in an estimator $\hat{S}$ satisfying $\displaystyle{\max_{S\in\C}} \ \P_S (\hat{S} \neq S) \leq \delta$ and the precision budget \eqref{eqn:budget}.
\end{proposition}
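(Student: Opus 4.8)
The statement is obtained by combining the error-probability bound already established in Proposition~\ref{prop:error_prob_upper} with a bound on the expected precision used by the procedure; the latter is essentially the display shown just above the statement, and the condition imposed on $\mu$ is precisely what turns that bound into the budget constraint. So the plan has two parts. For the error part, I would simply observe that the noiseless procedure defined for the $s$-intervals — the grid search on $1,s+1,2s+1,\dots$ followed by the leftward sweep — is an instance of the general two-phase scheme with $K=1$ and $l_1=l=1$, and that $|S|=s$ for every $S\in\C$. Hence, with the prescribed choices $\alpha_t=\delta/4n$ for all $t$, $\beta_t=\delta/2$ in the search phase and $\beta_t=\delta/4s$ in the refinement phase, Proposition~\ref{prop:error_prob_upper} yields $\P_S(\hat S\neq S)\le\delta$ for all $S\in\C$ with no further work. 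Everything then reduces to verifying \eqref{eqn:budget}.

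For the precision bound I would first use the geometry of the class to count the number of SLRTs performed in each phase. Since $n/s$ is assumed to be an integer, the grid points $1,s+1,\dots,n-s+1$ are exactly the residues $\equiv 1 \pmod s$ in $\{1,\dots,n\}$, so every block of $s$ consecutive integers — in particular every $s$-interval — contains exactly one of them; consequently the search phase performs at most $n/s$ tests, of which at most one queries a significant coordinate and the remaining ones query zeros. The refinement phase sweeps leftward from the significant coordinate located in the search phase, hence performs at most $s-1$ tests. Next, conditioning on the history $\{Q_i,\Psi_i\}_{i<t}$ (which determines $\alpha_t$, $\beta_t$ and $Q_t$), Proposition~\ref{prop:SLRT} together with \eqref{eqn:prec_upper_0}--\eqref{eqn:prec_upper_1} bounds the conditional expected precision of the $t$-th SLRT by $\tfrac{2}{\mu^2}\log\tfrac1{\beta_t}$ when $Q_t\notin S$, by $\tfrac{2}{\mu^2}\log\tfrac1{\alpha_t}$ when $Q_t\in S$, and in any case by $\tfrac{2}{\mu^2}\log\tfrac1{\min\{\alpha_t,\beta_t\}}$, provided $\Gamma$ is taken small enough; since only two distinct pairs $(\alpha_t,\beta_t)$ occur, a single sufficiently small $\Gamma$ works uniformly. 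Summing over $t$ via the tower rule — at most $n/s-1$ search zeros at cost $\tfrac{2}{\mu^2}\log\tfrac2\delta$ each, one search hit and at most $s-1$ refinement tests at cost $\tfrac{2}{\mu^2}\log\tfrac{4n}\delta$ each (using $s\le n$ so that $\min\{\alpha_t,\beta_t\}=\delta/4n$ in the refinement phase) — gives
\[
\E_S\Big(\sum_j\Gamma_j\Big)\le\frac{2}{\mu^2}\Big(\frac ns\log\frac2\delta+s\log\frac{4n}\delta\Big)
\]
for every $S\in\C$.

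Finally, the hypothesis $\mu\ge\sqrt{\tfrac{2n}{sm}\log\tfrac2\delta+\tfrac{2s}{m}\log\tfrac{4n}\delta}$ is exactly equivalent to $\tfrac{2}{\mu^2}\big(\tfrac ns\log\tfrac2\delta+s\log\tfrac{4n}\delta\big)\le m$, so the right-hand side above is at most $m$ and \eqref{eqn:budget} holds; this completes the argument. The only genuinely delicate point is the interchange in the precision estimate: one must justify, using that $T\le n$ almost surely and the tower rule, that the per-SLRT conditional bounds of Proposition~\ref{prop:SLRT} (stated for a fixed hypothesis and fixed $\alpha,\beta$) may be summed even though the error probabilities, the queried coordinate, and the number of tests are all random and adapted to the past — and it is exactly the structural counting of tests per phase that keeps the resulting sum finite and of the stated order.
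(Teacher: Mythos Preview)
Your proposal is correct and follows essentially the same route as the paper: invoke Proposition~\ref{prop:error_prob_upper} with $K=1$, $l=1$, $|S|=s$ for the error bound, then count that the search phase runs at most $n/s$ SLRTs (all but one on zeros) and the refinement phase at most $s-1$, bound each SLRT's expected precision via \eqref{eqn:prec_upper_0}--\eqref{eqn:prec_upper_1} with the crude $\min\{\alpha_t,\beta_t\}$ bound where needed, and rearrange the budget inequality into the stated condition on $\mu$. Your closing remark on the tower-rule justification is in fact slightly more careful than the paper's informal treatment, but the argument is otherwise identical.
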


In addition we can also control the expected Hamming-distance $\E_S (|\hat{S} \triangle S|)$ by recalling \eqref{eqn:hamming_vs_prob}. To guarantee that $\E_S (|\hat{S} \triangle S|)\leq \varepsilon$ we simply have to be slightly more conservative, and require the probability of error $\delta$ to be at most $\varepsilon/s$. A analogous result to that of Proposition~\ref{prop:int_upper} follows immediately. In case signals are sufficiently sparse, meaning $s\ll n$, the first term inside the square root dominates the bound. Therefore we have the following result
\begin{corollary}[$s$-intervals] \label{cor:int_upper}
Consider the setting of Proposition~\ref{prop:int_upper}. Suppose that $s=o\left(\sqrt{n/\log n}\right)$ as $n \to \infty$, and let $\omega_n \to \infty$ be arbitrary.

$(i)$ If
\[
\mu \geq \omega_n \sqrt{\frac{n}{sm}} \ ,
\]
the procedure above gives an estimator $\hat{S}$ such that $\displaystyle{\lim_{n \to \infty} \max_{S \in \C}} \ \P_S (\hat{S} \neq S) = 0$, and that satisfies \eqref{eqn:budget}.

$(ii)$ If
\[
\mu \geq \sqrt{\frac{2n}{sm} \left( \log s + \omega_n \right)} \ ,
\]
the procedure above gives an estimator $\hat{S}$ such that $\displaystyle{\lim_{n \to \infty} \max_{S \in \C}} \ \E_S (|\hat{S} \triangle S|) = 0$, and that satisfies \eqref{eqn:budget}.
\end{corollary}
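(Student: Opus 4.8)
The plan is to derive both parts of the corollary directly from Proposition~\ref{prop:int_upper} by choosing the target error probability $\delta$ as a suitable sequence $\delta_n\to 0$, and, for part~$(ii)$, by also invoking the Hamming-distance/error-probability relation \eqref{eqn:hamming_vs_prob}. The only substantive computation is to check that the stated lower bounds on $\mu$, together with the sparsity hypothesis $s=o\bigl(\sqrt{n/\log n}\bigr)$, imply the condition $\mu\geq\sqrt{\tfrac{2n}{sm}\log\tfrac{2}{\delta_n}+\tfrac{2s}{m}\log\tfrac{4n}{\delta_n}}$ required by Proposition~\ref{prop:int_upper} for all $n$ large enough; the budget constraint \eqref{eqn:budget} is then automatically inherited from that proposition.

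For part~$(i)$ I would take $\delta_n=2\exp(-\omega_n^2/8)$, so that $\delta_n\to 0$ because $\omega_n\to\infty$, while $\log\tfrac{2}{\delta_n}=\omega_n^2/8$ and $\log\tfrac{4n}{\delta_n}=\log(2n)+\omega_n^2/8$. Dividing the requirement of Proposition~\ref{prop:int_upper} by $n/(sm)$ and using $\mu^2\geq\omega_n^2 n/(sm)$, it suffices to show $\omega_n^2\geq 2\log\tfrac{2}{\delta_n}+\tfrac{2s^2}{n}\log\tfrac{4n}{\delta_n}=\tfrac{\omega_n^2}{4}+\tfrac{2s^2\log(2n)}{n}+\tfrac{s^2\omega_n^2}{4n}$. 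Here $\tfrac{2s^2\log(2n)}{n}\to 0$ by the sparsity assumption, so it is at most $\tfrac{\omega_n^2}{4}$ eventually, and $\tfrac{s^2}{n}\to 0$ gives $\tfrac{s^2\omega_n^2}{4n}\leq\tfrac{\omega_n^2}{4}$ for $n$ large; hence the right-hand side is at most $\tfrac{3}{4}\omega_n^2\leq\omega_n^2$ eventually. Proposition~\ref{prop:int_upper} then yields $\max_{S\in\C}\P_S(\hat S\neq S)\leq\delta_n\to 0$.

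For part~$(ii)$ I would first use \eqref{eqn:hamming_vs_prob} together with $|S|=s$ (noting $\hat S$ is either an element of $\C$ or $\emptyset$, so $|\hat S\triangle S|\leq 2s$ always) to get $\E_S(|\hat S\triangle S|)\leq 2s\,\P_S(\hat S\neq S)$. Then I would take $\delta_n=\tfrac{2}{s}\exp(-\omega_n/2)$, so that $2s\delta_n=4\exp(-\omega_n/2)\to 0$, while $\log\tfrac{2}{\delta_n}=\log s+\tfrac{\omega_n}{2}$ and $\log\tfrac{4n}{\delta_n}=\log(2ns)+\tfrac{\omega_n}{2}$. Since $\mu^2\geq\tfrac{2n}{sm}(\log s+\omega_n)$, dividing the requirement of Proposition~\ref{prop:int_upper} by $\tfrac{2n}{sm}$ reduces the claim to $\log s+\omega_n\geq\log s+\tfrac{\omega_n}{2}+\tfrac{s^2}{n}\bigl(\log(2ns)+\tfrac{\omega_n}{2}\bigr)$, i.e.\ to $\tfrac{\omega_n}{2}\geq\tfrac{s^2}{n}\bigl(\log(2ns)+\tfrac{\omega_n}{2}\bigr)$. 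Using $s\leq n$ and $\tfrac{s^2\log n}{n}\to 0$, the term $\tfrac{s^2}{n}\log(2ns)\to 0$, and $\tfrac{s^2}{n}\to 0$ gives $\tfrac{s^2}{n}\cdot\tfrac{\omega_n}{2}\leq\tfrac{\omega_n}{4}$ for $n$ large; so the inequality holds eventually, Proposition~\ref{prop:int_upper} gives $\max_{S\in\C}\P_S(\hat S\neq S)\leq\delta_n$, and hence $\max_{S\in\C}\E_S(|\hat S\triangle S|)\leq 2s\delta_n\to 0$.

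I do not expect a genuine obstacle here: the statement is essentially bookkeeping on top of Proposition~\ref{prop:int_upper} and \eqref{eqn:hamming_vs_prob}. The one point needing care is the choice of the rate at which $\delta_n\to 0$: it must be slow enough that $\log(1/\delta_n)$ is absorbed into the available slack --- $\omega_n^2$ in part~$(i)$ and $\log s+\omega_n$ in part~$(ii)$ --- yet fast enough that $\delta_n\to 0$ (part~$(i)$) and $s\delta_n\to 0$ (part~$(ii)$). The sparsity hypothesis $s=o\bigl(\sqrt{n/\log n}\bigr)$ enters exactly once, to guarantee that the ``refinement-phase'' precision cost $\tfrac{2s}{m}\log\tfrac{4n}{\delta}$ is negligible relative to the ``search-phase'' cost $\tfrac{2n}{sm}\log\tfrac{1}{\delta}$, which is precisely what makes the first term inside the square root in Proposition~\ref{prop:int_upper} the dominant one.
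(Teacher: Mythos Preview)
Your proof is correct and follows exactly the approach the paper sketches before stating the corollary: apply Proposition~\ref{prop:int_upper} with a vanishing $\delta_n$, use \eqref{eqn:hamming_vs_prob} for part~$(ii)$, and invoke the sparsity hypothesis to make the refinement-phase term negligible. The paper does not spell out the explicit choices of $\delta_n$ or the verification that the first term dominates, so your proposal is in fact more detailed than the paper's own treatment; your handling of the $\hat S=\emptyset$ case via the crude bound $|\hat S\triangle S|\leq 2s$ is also a nice touch that sidesteps the $|\hat S|=|S|$ proviso in \eqref{eqn:hamming_vs_prob}.
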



\subsubsection{Unions of $s$-intervals} \label{sec:u_int}

Now we consider the class whose elements are the union of $k$ disjoint $s$-intervals, where $s$-intervals were defined in the previous subsection. Formally let $\C'$ be the class of $s$-intervals as defined previously. Then
\[
\C = \left\{ S \subseteq \{ 1,\dots ,n \} : \ S= \displaystyle{\bigcup_{i=1}^{k}} S_i, \ S_i \in \C' \ \forall i, \ S_i \cap S_j = \emptyset \ \forall i \neq j \right\} \ .
\]
Again, assume $n/s$ is an integer for simplicity. Note that the cardinality of the support sets belonging to this class is $ks$. In case $s=1$ and $k=s$ this class is the same as the class of $s$-sets considered in Proposition~\ref{prop:vanilla_upper}. When we choose $k=1$ this is the class of $s$-intervals described in Section~\ref{sec:int}. In that sense this class can be viewed as a bridge between the two previous classes.

The procedure for the noiseless case will again consist of one search and one refinement phase. In the search phase we sample coordinates $1,s+1,2s+1,\dots$ until we find $k$ non-zero coordinates. Then in the refinement phase, we sample coordinates to the left of the previously found non-zero coordinates to find the left endpoints of all $k$ intervals. Note that we make at most $k(s-1)$ queries in the second phase.
This procedure is an instance of that described in the setting of Proposition~\ref{prop:error_prob_upper} with $K=1$ and $l_1=l=k$. Taking the corresponding choices for $\alpha_t, \beta_t$ ensures $\P_S (\hat{S} \neq S) \leq \delta , \ \forall S \in \C$. As for the expected precision used we can write
\begin{align*}
\E_S \left( \sum_{j} \Gamma_j \right) \leq {} & \E_S \left( \sum_{t=1}^{T'} \frac{2}{\mu^{2}} \log \frac{1}{\beta_t} + |S| \frac{2}{\mu^{2}} \log \frac{1}{\min\{\alpha_t,\beta_t\}}\right)\\
\leq {} & \frac{2}{\mu^2} \left(\frac{n}{s} \log \frac{2k}{\delta} + ks \log \frac{4n}{\delta} \right)\ .
\end{align*}

Combining this with the bound on the total precision available \eqref{eqn:budget} we arrive to the following.
\begin{proposition} \label{prop:u_int_upper}
Let $\C$ denote the class of unions of $s$-intervals as defined above, and suppose
\[
\mu \geq \sqrt{\frac{2n}{sm} \log \frac{2k}{\delta} + \frac{2ks}{m} \log \frac{4n}{\delta}} \ .
\]
The procedure above results in an estimator $\hat{S}$ satisfying both $\displaystyle{\max_{S\in\C}} \ \P_S (\hat{S} \neq S) \leq \delta$ and the precision budget \eqref{eqn:budget}.
\end{proposition}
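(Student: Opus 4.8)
The plan is to mirror the two-step argument used for $s$-intervals in Section~\ref{sec:int}: first invoke Proposition~\ref{prop:error_prob_upper} to bound the probability of error, then bound the expected precision used and compare it with the budget \eqref{eqn:budget}. For the first step I would simply note that the noiseless-case procedure described above is an instance of the generic procedure of Proposition~\ref{prop:error_prob_upper} with a single search phase ($K=1$) in which we look for $l_1=l=k$ signal entries. Plugging in $\alpha_t=\delta/4n$ for all $t$, $\beta_t=\delta/2l=\delta/2k$ in the search phase and $\beta_t=\delta/4|S|=\delta/4ks$ in the refinement phase, Proposition~\ref{prop:error_prob_upper} gives $\P_S(\hat S\neq S)\le\delta$ for every $S\in\C$ with no extra work.

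The substantive step is bounding $\E_S(\sum_j\Gamma_j)$, and this reduces to a query count. Two deterministic facts do the job. First, the search phase samples the coordinates $1,s+1,2s+1,\dots$, of which there are exactly $n/s$ (this is where $s\mid n$ is used), and each of the $k$ disjoint $s$-intervals contains exactly one coordinate of this form; hence the search phase finds all $k$ intervals within at most $n/s$ queries, so $T'\le n/s$ surely. Second, the refinement phase uses at most $s-1$ queries per interval, hence at most $k(s-1)$ in total, so over both phases at most $k+k(s-1)=ks=|S|$ queries land on a signal entry and at most $n/s$ land on a zero. For each individual query I would pass to the small-$\Gamma$ regime via Proposition~\ref{prop:SLRT} and then use \eqref{eqn:prec_upper_0} to bound its expected precision by $\frac{2}{\mu^{2}}\log\frac{1}{\beta_t}$ when the queried coordinate is zero, and \eqref{eqn:prec_upper_1} to bound it by $\frac{2}{\mu^{2}}\log\frac{1}{\alpha_t}=\frac{2}{\mu^{2}}\log\frac{4n}{\delta}$ when it is a signal entry (note $\alpha_t\le\beta_t$ throughout, since $ks\le n$, so $\min\{\alpha_t,\beta_t\}=\alpha_t$). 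Adding up the at most $n/s$ zero-queries (each at most $\frac{2}{\mu^{2}}\log\frac{2k}{\delta}$, using the larger, search-phase value of $\beta_t$) and the at most $ks$ signal-queries yields
\[
\E_S\!\left(\sum_j\Gamma_j\right)\;\le\;\frac{2}{\mu^{2}}\left(\frac{n}{s}\log\frac{2k}{\delta}+ks\log\frac{4n}{\delta}\right),
\]
which is the bound displayed just before the statement. Comparing with \eqref{eqn:budget} now requires the right-hand side to be at most $m$, i.e.\ $\mu^{2}\ge\frac{2n}{sm}\log\frac{2k}{\delta}+\frac{2ks}{m}\log\frac{4n}{\delta}$, which is exactly the hypothesis; together with the first step this proves the claim.

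I do not expect a genuine obstacle here, since structurally the argument is identical to the $s$-interval case (Proposition~\ref{prop:int_upper}); the only new element is the query count, and within that the one point worth stating carefully is that every $s$-interval contains exactly one of the $n/s$ search coordinates, so enlarging the target from one interval to $k$ intervals does not change the $n/s$ bound on the length of the search phase. The usual caveat that $\alpha_t,\beta_t$ may depend on the random history $\{Q_i,\Psi_i\}_{i<t}$ is harmless: it is already absorbed by the telescoping conditional bound in the proof of Proposition~\ref{prop:error_prob_upper}, and the precision count above is pathwise and hence unaffected.
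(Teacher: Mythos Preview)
Your overall strategy is exactly the paper's: invoke Proposition~\ref{prop:error_prob_upper} with $K=1$, $l=k$ for the error bound, then count queries to bound the expected precision and compare with~\eqref{eqn:budget}. The final inequality you display matches the paper's calculation verbatim.

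There is, however, a bookkeeping slip in your precision count. You split queries into ``zero-queries'' and ``signal-queries'' and claim (i) at most $n/s$ zero-queries, and (ii) each zero-query costs at most $\tfrac{2}{\mu^{2}}\log\tfrac{2k}{\delta}$ ``using the larger, search-phase value of $\beta_t$''. Claim (ii) is the wrong direction: the search-phase $\beta_t=\delta/2k$ is indeed the larger of the two values, but that makes $\log(1/\beta_t)$ \emph{smaller}, so it is not an upper bound for refinement-phase zero-queries, where $\beta_t=\delta/4ks$ and the cost is $\tfrac{2}{\mu^{2}}\log\tfrac{4ks}{\delta}$. Claim (i) is also not a pathwise fact in the noisy case: if the search phase produces a false positive, the subsequent refinement around that false positive can query up to $s-1$ zeros, so the total number of zero-queries is not deterministically bounded by $n/s$.

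Both issues disappear if you decompose by \emph{phase} rather than by zero/signal, as the paper does. The search phase has at most $n/s$ queries no matter what; of these at most $n/s$ are zeros (cost $\tfrac{2}{\mu^{2}}\log\tfrac{2k}{\delta}$ each) and at most $k$ are signals. The refinement phase has at most $k(s-1)$ queries regardless of the SLRT outcomes, and each is bounded crudely by $\tfrac{2}{\mu^{2}}\log\tfrac{1}{\min\{\alpha_t,\beta_t\}}=\tfrac{2}{\mu^{2}}\log\tfrac{4n}{\delta}$. Grouping the $k$ search-phase signals with the $k(s-1)$ refinement queries gives $ks=|S|$ terms at $\tfrac{2}{\mu^{2}}\log\tfrac{4n}{\delta}$, and the stated bound follows. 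This is exactly the paper's accounting
\[
\E_S\!\left(\sum_j\Gamma_j\right)\le\E_S\!\left(\sum_{t=1}^{T'}\frac{2}{\mu^{2}}\log\frac{1}{\beta_t}+|S|\,\frac{2}{\mu^{2}}\log\frac{1}{\min\{\alpha_t,\beta_t\}}\right)\le\frac{2}{\mu^{2}}\left(\frac{n}{s}\log\frac{2k}{\delta}+ks\log\frac{4n}{\delta}\right),
\]
and it is valid pathwise, independent of which SLRTs err.
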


In case of sparse signals, that is, when both $s$ and $k$ are small, the first term on the right side dominates this bound. More precisely we have the following.
\begin{corollary}[unions of $s$-intervals] \label{cor:u_int_upper}
Consider the setting of Proposition~\ref{prop:u_int_upper}. Assume $k\geq 2$ and $s\geq 1$ such that $s = o\left(\sqrt{\frac{n\log k}{k \log n}}\right)$ as $n\to\infty$. Let $\omega_n \to \infty$ be arbitrary.

$(i)$ If
\[
\mu \geq \sqrt{\frac{2n}{sm} \left( \log k + \omega_n \right)} \ ,
\]
the procedure above gives an estimator $\hat{S}$  such that $\displaystyle{\lim_{n \to \infty} \max_{S \in \C}} \ \P_S (\hat{S} \neq S) = 0$, and that satisfies \eqref{eqn:budget}.

$(ii)$ If
\[
\mu \geq \sqrt{\frac{2n}{sm} \left( \log ks + \omega_n \right)} \ ,
\]
then the procedure above gives an estimator $\hat{S}$ such that $\displaystyle{\lim_{n \to \infty} \max_{S \in \C}} \ \E_S (|\hat{S} \triangle S|) = 0$, and that satisfies \eqref{eqn:budget}.
\end{corollary}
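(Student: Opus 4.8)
The plan is to deduce both parts from Proposition~\ref{prop:u_int_upper} by running the proposed procedure with the error parameter $\delta$ specialized to a sequence $\delta_n\to0$ (allowed to depend on $n,m,s,k$ and on $\omega_n$), and then checking that the hypothesis of the corollary forces the sufficient condition of that proposition for all $n$ large enough. Two consequences of the sparsity hypothesis $s=o\big(\sqrt{n\log k/(k\log n)}\big)$ will be used throughout: first, $ks^2/n\to0$, so lower-order multiplicative factors are harmless; and second, $ks^2\log n=o(n\log k)$, which is precisely the statement that the second summand $\tfrac{2ks}{m}\log\tfrac{4n}{\delta_n}$ in the budget condition of Proposition~\ref{prop:u_int_upper} is negligible relative to the first, $\tfrac{2n}{sm}\log\tfrac{2k}{\delta_n}$, provided $\delta_n$ is not chosen to decay faster than, say, a fixed power of $n$ (so that $\log(4n/\delta_n)=O(\log n)$ and $\log(2k/\delta_n)\ge\log k$).

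For part $(i)$ I would fix a sequence $\delta_n\to0$ that decays slowly, in the precise sense that $\log(1/\delta_n)\to\infty$ while $\log(1/\delta_n)=o(\omega_n)$ and $\log(1/\delta_n)=o(\log n)$; such a sequence exists because $\omega_n\to\infty$ (for instance $\delta_n=\exp(-\min\{\sqrt{\omega_n},\sqrt{\log n}\})$). Then $\log\tfrac{2k}{\delta_n}=\log k+o(\omega_n)+O(1)$, the second summand of the budget condition is $o\big(\tfrac{2n}{sm}\log\tfrac{2k}{\delta_n}\big)$ by the remark above, and hence for $n$ large the whole budget condition of Proposition~\ref{prop:u_int_upper} is implied by $\mu\ge\sqrt{\tfrac{2n}{sm}(\log k+\omega_n)}$. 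Proposition~\ref{prop:u_int_upper} then gives $\max_{S\in\C}\P_S(\hat S\neq S)\le\delta_n\to0$ while respecting \eqref{eqn:budget}, which is $(i)$.

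For part $(ii)$ I would run the same procedure with $\delta_n=\varepsilon_n/(2ks)$ for a sequence $\varepsilon_n\to0$ that I let vanish as slowly as needed (so that $\log(1/\varepsilon_n)=o(\omega_n)$). The estimator produced by the procedure always satisfies $|\hat S|\in\{0,ks\}$, hence $|\hat S\triangle S|\le 2ks\,\1\{\hat S\neq S\}$ for every $S\in\C$; combining this with the right-hand inequality of \eqref{eqn:hamming_vs_prob} and Proposition~\ref{prop:u_int_upper} gives $\E_S(|\hat S\triangle S|)\le 2ks\,\P_S(\hat S\neq S)\le 2ks\,\delta_n=\varepsilon_n\to0$. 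It then remains to check the budget condition of Proposition~\ref{prop:u_int_upper} for this $\delta_n$: here $\log(1/\delta_n)=\log(2ks)+\log(1/\varepsilon_n)$, so $\log\tfrac{2k}{\delta_n}$ is of order $\log(ks)$ up to an additive $o(\omega_n)$ term, the second summand is again negligible by the same sparsity comparison, and for $n$ large the budget condition follows from $\mu\ge\sqrt{\tfrac{2n}{sm}(\log ks+\omega_n)}$, giving $(ii)$.

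The arithmetic with logarithms is routine; the one place that needs genuine care --- and the main obstacle --- is to calibrate the decay rate of $\delta_n$ (resp.\ $\varepsilon_n$) so that it simultaneously (a)~tends to $0$, (b)~keeps $\log(1/\delta_n)$ small enough not to inflate the leading $\log k$ (resp.\ $\log ks$) term past the available slack $\omega_n$, and (c)~stays large enough that $\tfrac{ks^2}{n}\log(4n/\delta_n)$ remains dominated by $\log(2k/\delta_n)$. Requirement~(c) is what pins down the sparsity exponent in the hypothesis, via $ks^2\log n=o(n\log k)$; note moreover that here the dominant term only reproduces the $s$-interval scaling $\mu\sim\sqrt{n/(sm)}$ up to the logarithmic factor, so --- in contrast with the classes considered later --- the union structure yields no additional gain in the exponent of $\mu$.
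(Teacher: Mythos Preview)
Your approach is exactly what the paper intends: the paper gives no proof beyond the one-line remark that ``in case of sparse signals the first term on the right side dominates this bound'', and you supply the natural implementation --- run the procedure of Proposition~\ref{prop:u_int_upper} with $\delta=\delta_n\to0$ chosen slowly enough that $\log(1/\delta_n)$ is absorbed by the slack $\omega_n$, use the sparsity hypothesis to discard the second summand, and for part~(ii) push $\delta_n$ down by an extra factor $ks$ so that \eqref{eqn:hamming_vs_prob} converts the probability bound into a Hamming bound. Your identification of the calibration of $\delta_n$ as the only delicate step is right, and matches the paper's level of detail.

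One caveat on the step you call ``routine'': the sparsity assumption gives $\tfrac{ks^2\log n}{n}=o(\log k)$, so the second summand is $o(\text{first summand})$, but this only yields that the full budget condition of Proposition~\ref{prop:u_int_upper} equals $(1+o(1))\tfrac{2n}{sm}\log(2k/\delta_n)$. The residual $o(1)\cdot\log k$ must then be covered by $\omega_n$; if $\omega_n$ grows much more slowly than $\log k$ while $\tfrac{ks^2\log n}{n}\to\infty$ (which the stated sparsity window permits), the implication you assert does not literally follow with the constant $2$. The paper glosses over the same point, so this is a wrinkle in the corollary's exact constants rather than in your strategy.
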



\subsubsection{$s$-stars} \label{sec:star}

Consider a setting when the coordinates of $\vec{x}$ correspond to edges of a complete undirected graph $G=(V,E)$ with $p$ vertices. We call a support set $S$ an $s$-star if the edges in $G$ corresponding to $S$ form a star in $G$ (see Figure~1 in \cite{Comb_Testing_Lugosi_2010}). Formally, let $e_i \in E$ denote the edge of $G$ corresponding to coordinate $i$ of $\vec{x}$ for all $i=1,\dots ,n$. The class of $s$-stars is defined as
\[
\C = \left\{ S \subseteq \{ 1,\dots ,n \} : \ \displaystyle{\bigcap_{i \in S}} e_i = v \in V , \ |S|=s \right\} \ ,
\]
where $e_i \cap e_j$ is the set of common vertices of edges $e_i ,e_j \in E$. Unlike what was done for the previous classes we use a randomized procedure for the noiseless case. Like was done for $s$-intervals the procedure consists of one search and one refinement phase. In the search phase we randomly search the coordinates of $\vec{x}$ until we find a non-zero coordinate. In the refinement phase we sample the coordinates of $\vec{x}$ which correspond to edges that share a vertex with the non-zero coordinate found in the search phase.

Define $Q_t \sim \mathrm{Unif} \big( \{ 1,\dots ,n \} \setminus \{ Q_1 ,\dots ,Q_{t-1} \} \big)$ for $t=1,\dots ,T'$ where $T' = \inf \{ t: \tilde{Y}_t =1 \}$, and ${Q_t \sim \mathrm{Unif} \big( \tilde{X} \setminus \{ Q_1 ,\dots ,Q_{t-1} \} \big)}$, where $\tilde{X} = \big\{ i \in \{ 1,\dots ,n \} : \ e_i \cap e_{T'} \neq \emptyset \big\}$. The stopping time $T$ and estimator $\hat{S}$ are defined as usual in \eqref{def:T} and \eqref{def:S_hat}. Note that this is an instance of the general procedure described in the setting of Proposition~\ref{prop:error_prob_upper} with $K=1$ and $l_1=1$.

The expected amount of precision used is now a bit more tedious to calculate due to the randomness in the search phase, which in the noisy case is prone to errors. For this reason we slightly modify the procedure above to greatly simplify the analysis. The modification is that in search phase we only take at most $J$ queries. Therefore, if $J$ is small one might end the search phase without finding a star. However, we choose $J$ large enough such that the probability of not querying a signal component is small. If we adjust the error probabilities $\alpha_t ,\beta_t$ accordingly, we can still ensure that the probability of error of the procedure is small. More precisely, we choose $J$ such that $\P_S ( \forall t=1,\dots ,J: \ Q_t \notin S ) \leq \delta /2$. Since
\[
\P_S \left( \forall t=1,\dots ,J: \ Q_t \notin S \right) = \frac{{n-s\choose J}}{{n \choose J}} \leq \left( 1- \frac{s}{n} \right)^J \ ,
\]
choosing $J = n/s \log 2/\delta$ ensures that the probability above is less than $\delta /2$. Now choosing $\alpha_t ,\beta_t$ according to Proposition~\ref{prop:error_prob_upper} with $\delta$ replaced by $\delta /2$ ensures $\P_S (\hat{S} \neq S) \leq \delta , \ \forall S \in \C$.
With this modification the expected amount of precision is bounded by
\begin{align*}
\E_S \left( \sum_{j} \Gamma_j \right) \leq {} & \E_S \left( \sum_{t=1}^{T'} \frac{2}{\mu^{2}} \log \frac{1}{\beta_t} + |S| \frac{2}{\mu^{2}} \log \frac{1}{\alpha_t} + \sum_{t=T'+1}^{T} \frac{2}{\mu^{2}} \log \frac{1}{\beta_t} \right)\\
\leq {} & \frac{2}{\mu^2} \left( J \log \frac{4}{\delta} + |S| \log \frac{8n}{\delta} + 2(p-2) \log \frac{8s}{\delta} \right) \\
\leq {} & \frac{2}{\mu^2} \left( \frac{n}{s} \left( \log \frac{4}{\delta} \right)^2 + s \log \frac{8n}{\delta} + \sqrt{8n} \log \frac{8s}{\delta} \right)\ .
\end{align*}
Combining this with the bound on the total precision available \eqref{eqn:budget} we get the following.
\begin{proposition} \label{prop:star_upper}
Let $\C$ be the class of $s$-stars as defined above and suppose
\[
\mu \geq \sqrt{\frac{2n}{sm} \left( \log \frac{4}{\delta} \right)^2 + \frac{2s}{m} \log \frac{8n}{\delta} + \frac{\sqrt{32n}}{m} \log \frac{8s}{\delta}} \ .
\]
The procedure above results in an estimator $\hat{S}$ satisfying both $\displaystyle{\max_{S\in\C}} \ \P_S (\hat{S} \neq S) \leq \delta$ and the precision budget \eqref{eqn:budget}.
\end{proposition}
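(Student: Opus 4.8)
The plan is to reuse the two-step template of the interval classes: first certify that the truncated randomized procedure errs with probability at most $\delta$, then bound its expected precision and feed that into the budget constraint \eqref{eqn:budget}.

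\emph{Error probability.} I would split the failure event into two pieces. The first is the event that none of the $J=(n/s)\log(2/\delta)$ queries of the truncated search phase lands in $S$; since the queries are drawn uniformly without replacement, this has probability $\binom{n-s}{J}/\binom{n}{J}\le(1-s/n)^J\le\delta/2$ by the choice of $J$. On the complement exactly one signal coordinate is queried in the search phase, and conditionally on this the procedure is an instance of the general two-phase scheme with $K=1$, $l_1=l=1$, so Proposition~\ref{prop:error_prob_upper} applied with $\delta$ replaced by $\delta/2$ --- i.e.\ with $\alpha_t=\delta/(8n)$ throughout, $\beta_t=\delta/4$ in the search phase and $\beta_t=\delta/(8|S|)$ in the refinement phase --- guarantees $\hat S=S$ with conditional probability at least $1-\delta/2$. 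Cleanly, one couples $\Psi_t$ to the noiseless answer, observing that $\hat S=S$ whenever a signal coordinate is found within $J$ queries and none of the at most $n$ type-I tests and at most $|S|+1$ type-II tests err; summing the error budgets $n\cdot\delta/(8n)$, $\delta/4$, $|S|\cdot\delta/(8|S|)$ gives $\le\delta/2$. A union bound over the two pieces yields $\max_{S\in\C}\P_S(\hat S\neq S)\le\delta$.

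\emph{Expected precision.} I would break $\sum_j\Gamma_j$ over the search and refinement phases and apply \eqref{eqn:prec_upper_0} and \eqref{eqn:prec_upper_1} (valid for small $\Gamma$ by Proposition~\ref{prop:SLRT}) coordinate by coordinate: a queried zero coordinate costs at most $\tfrac{2}{\mu^2}\log(1/\beta_t)$ and a queried signal coordinate at most $\tfrac{2}{\mu^2}\log(1/\alpha_t)$. In the search phase all but at most one queried coordinate is zero, contributing at most $\tfrac{2}{\mu^2}\bigl(J\log\tfrac4\delta+\log\tfrac{8n}{\delta}\bigr)$. The combinatorial input for the refinement phase is that an edge $e_{T'}=\{u,v\}$ of the complete graph is incident to exactly $2(p-2)$ other edges, so at most $2(p-2)$ coordinates are queried there, at most $|S|=s$ of them signal; this contributes at most $\tfrac{2}{\mu^2}\bigl(|S|\log\tfrac{8n}{\delta}+2(p-2)\log\tfrac{8s}{\delta}\bigr)$. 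Finally $\binom p2=n$ forces $p\le\sqrt{2n}+1$, so $2(p-2)\le\sqrt{8n}$, while $J\log\tfrac4\delta\le\tfrac ns(\log\tfrac4\delta)^2$; collecting the three terms gives exactly the displayed bound, and substituting it into \eqref{eqn:budget} and solving for $\mu$ produces the stated threshold.

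\emph{Main obstacle.} The union bound and the term-by-term precision accounting are routine. The delicate point, which does not arise for the interval classes, is the randomized and truncated search phase: one must check that enlarging $J$ enough to make $(1-s/n)^J\le\delta/2$ inflates the search-phase precision only by an extra $\log(1/\delta)$ factor --- this is precisely what turns $\tfrac ns\log\tfrac1\delta$ into the harmless $\tfrac ns(\log\tfrac4\delta)^2$ --- and that conditioning on ``a signal coordinate is found within $J$ queries'' does not disturb the chain-rule argument underlying Proposition~\ref{prop:error_prob_upper}. Handling this via the coupling ``$\Psi_t$ equals the noiseless answer unless the $t$-th test errs'' is the crux of the proof.
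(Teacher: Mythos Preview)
Your approach is the same as the paper's: split off the event that the truncated random search fails to sample any element of $S$ (probability $\le\delta/2$ by the choice of $J$), apply the chain-rule argument of Proposition~\ref{prop:error_prob_upper} with $\delta/2$ in place of $\delta$ on the complement, and bound the expected precision using \eqref{eqn:prec_upper_0}--\eqref{eqn:prec_upper_1} together with $J\log(4/\delta)\le(n/s)(\log(4/\delta))^2$ and $2(p-2)\le\sqrt{8n}$.

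One small correction is in order. The assertion ``in the search phase all but at most one queried coordinate is zero'' (and the parallel ``on the complement exactly one signal coordinate is queried in the search phase'') is not true in general: it holds only on the event that no type~II error occurs during the search, since a missed signal coordinate lets the search continue past it. For the error-probability part this is harmless, because the chain-rule argument underlying Proposition~\ref{prop:error_prob_upper} already conditions on all previous tests being correct, and on that event exactly one signal is queried in search. For the precision bound, however, your search-phase estimate $J\log(4/\delta)+\log(8n/\delta)$ is not a valid almost-sure upper bound on the search-phase cost. The clean fix --- and what the paper does --- is to account for signal queries globally rather than per phase: since no coordinate is ever queried twice, the total number of signal queries over both phases is at most $|S|=s$ regardless of any test errors, contributing at most $s\log(8n/\delta)$; zero queries are bounded deterministically by $J$ in search and $2(p-2)$ in refinement. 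This yields $\tfrac{2}{\mu^2}\bigl(J\log\tfrac4\delta+s\log\tfrac{8n}\delta+2(p-2)\log\tfrac{8s}\delta\bigr)$, matching the displayed bound exactly and eliminating the stray extra $\log(8n/\delta)$ in your version (which would otherwise make the middle term read $s+1$ rather than $s$).
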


In case of sparse signals the first term on the right side dominates this bound. More precisely we have the following.
\begin{corollary} [$s$-stars]\label{cor:star_upper}
Consider the setting of Proposition~\ref{prop:star_upper}. Suppose $n \to \infty$ such that $s = o(\sqrt{n}/\log n)$. Let $\omega_n \to \infty$  be arbitrary.

$(i)$ If
\[
\mu \geq \omega_n \sqrt{\frac{n}{sm}} \ ,
\]
the procedure above gives an estimator $\hat{S}$ such that $\displaystyle{\lim_{n \to \infty} \max_{S \in \C}} \ \P_S (\hat{S} \neq S) = 0$, and that satisfies \eqref{eqn:budget}.

$(ii)$ If
\[
\mu \geq \sqrt{\frac{2n}{sm} \left( \log s + \omega_n \right)^2} \ ,
\]
the procedure above gives an estimator $\hat{S}$ such that $\displaystyle{\lim_{n \to \infty} \max_{S \in \C}} \ \E_S (|\hat{S} \triangle S|) = 0$, and that satisfies \eqref{eqn:budget}.
\end{corollary}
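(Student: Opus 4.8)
The plan is to deduce both parts from Proposition~\ref{prop:star_upper} by choosing the target error probability $\delta=\delta_n$ appropriately, after first observing that under the sparsity hypothesis $s=o(\sqrt n/\log n)$ the first of the three terms in the bound of Proposition~\ref{prop:star_upper} dominates the other two. To see the latter, write $L_n=\log(1/\delta_n)$ and suppose $\delta_n\to 0$, so $L_n\to\infty$; then $\log\frac4{\delta_n}=L_n+O(1)$ and, since $s\le\sqrt{2n}$, both $\log\frac{8n}{\delta_n}$ and $\log\frac{8s}{\delta_n}$ are at most $2(\log n+L_n)$. Dividing the second and third terms by the first yields quantities of order $\frac{s^2(\log n+L_n)}{nL_n^2}$ and $\frac{s(\log n+L_n)}{\sqrt n\,L_n^2}$ respectively; since $L_n\to\infty$, both tend to $0$ — the binding case being a slowly growing $L_n$, where $s=o(\sqrt n/\log n)$ is exactly what makes the third ratio vanish. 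Consequently, for any such $\delta_n$,
\[
\frac{2n}{sm}\Big(\log\tfrac4{\delta_n}\Big)^2+\frac{2s}{m}\log\tfrac{8n}{\delta_n}+\frac{\sqrt{32n}}{m}\log\tfrac{8s}{\delta_n}=(1+o(1))\,\frac{2n}{sm}\Big(\log\tfrac4{\delta_n}\Big)^2 .
\]

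For part $(i)$ I would take $\delta_n=4\,e^{-\omega_n/4}$, so that $\delta_n\to 0$ and $\log\frac4{\delta_n}=\omega_n/4$. By the display, the quantity under the square root in Proposition~\ref{prop:star_upper} equals $(1+o(1))\frac{2n}{sm}\cdot\frac{\omega_n^2}{16}$, whose square root is at most $\omega_n\sqrt{n/(sm)}\le\mu$ for all $n$ large. Hence Proposition~\ref{prop:star_upper} applies with this $\delta_n$, giving $\max_{S\in\C}\P_S(\hat S\neq S)\le\delta_n\to 0$ together with the precision budget~\eqref{eqn:budget}.

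For part $(ii)$ I would instead pick $\delta_n$ with $\log\frac4{\delta_n}=\sqrt{1-\eta_n}\,(\log s+\omega_n)$, where $\eta_n\to 0$ is chosen just large enough to absorb the two lower-order terms; by the dominance estimate one may take $\eta_n$ of order $\frac{s(\log n+\log s+\omega_n)}{\sqrt n\,(\log s+\omega_n)^2}$, which tends to $0$ by sparsity. Then the quantity under the square root in Proposition~\ref{prop:star_upper} is at most $(1-\eta_n)\frac{2n}{sm}(\log s+\omega_n)^2+\eta_n\frac{2n}{sm}(\log s+\omega_n)^2=\frac{2n}{sm}(\log s+\omega_n)^2\le\mu^2$, so the proposition again yields $\max_{S\in\C}\P_S(\hat S\neq S)\le\delta_n$ and the budget. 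Since $\log\frac1{\delta_n}=(1-o(1))(\log s+\omega_n)+O(1)$, a short computation — using once more that $s=o(\sqrt n/\log n)$, so that $\eta_n\log s\to 0$ and $\eta_n\omega_n\to 0$ — shows $2s\,\delta_n\to 0$; as $|\hat S|=|S|=s$, inequality~\eqref{eqn:hamming_vs_prob} then gives $\max_{S\in\C}\E_S(|\hat S\triangle S|)\le 2s\,\delta_n\to 0$.

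The dominance estimate above is routine. The delicate point is the calibration of $\delta_n$ — equivalently of the slack $\eta_n$ — in part $(ii)$: $\delta_n$ must be small enough that $\log\frac1{\delta_n}$ still forces $2s\delta_n\to 0$, yet large enough that the two lower-order precision terms fit within the budget. The hypothesis $s=o(\sqrt n/\log n)$ is precisely what guarantees this window is nonempty.
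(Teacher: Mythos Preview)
Your argument is correct and is precisely the kind of computation the paper leaves implicit: the corollary is stated without proof, preceded only by the remark that ``in case of sparse signals the first term on the right side dominates this bound.'' You have supplied the missing details---choosing $\delta=\delta_n$ suitably and verifying that under $s=o(\sqrt{n}/\log n)$ the second and third terms of Proposition~\ref{prop:star_upper} are $o(1)$ relative to the first---which is exactly the intended route.

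One small remark on presentation: in part~(ii) the calibration via $\eta_n$ works but is a bit roundabout. A slightly cleaner alternative is to fix $\delta_n$ directly so that $2s\delta_n\to 0$ is automatic---for instance $\delta_n=4e^{-(\log s+\omega_n/2)}$, giving $\log(4/\delta_n)=\log s+\omega_n/2$---and then use the slack $(\log s+\omega_n)^2-(\log s+\omega_n/2)^2\ge \tfrac{1}{2}\omega_n(\log s+\omega_n)$ (multiplied by $2n/(sm)$) to absorb $T_2+T_3$. This avoids the mild circularity of having $\eta_n$ depend on quantities that themselves depend on $\delta_n$, though your resolution (fixing $\eta_n$ explicitly in terms of $n,s,\omega_n$ and bounding $T_2,T_3$ with $L_n\le \log s+\omega_n$) is perfectly sound. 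The claim $\eta_n\omega_n\to 0$ is not actually needed for $2s\delta_n\to 0$; only $\eta_n\log s\to 0$ matters there, and your verification of that under the sparsity assumption is correct.
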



\subsubsection{Unions of $s$-stars} \label{sec:u_star}

The unions of $k$ non-intersecting $s$-stars is a generalization of the class of $s$-stars defined in the previous section. Suppose for technical reasons that $k < s$. Let $\C'$ be the class of $s$-stars as defined previously. Then
\[
\C = \left\{ S \subseteq \{ 1,\dots ,n \} : \ S= \displaystyle{\bigcup_{i=1}^{k}} S_i, \ S_i \in \C' \ \forall i, \ S_i \cap S_j = \emptyset \ \forall i \neq j \right\} \ .
\]
Note that the cardinality of the support sets belonging to this class is $ks$. In contrast to what we have done before, the proposed noiseless procedure will consist of alternating search and refinement phases. In the search phases we randomly search coordinates of $\vec{x}$ until we find a signal coordinate. Then we switch to the a refinement phase and sample every coordinate of $\vec{x}$ which correspond to edges that share a vertex with the non-zero coordinate found previously. After doing so it may happen that we find signal entries corresponding to more than one star, since the stars may share vertices. If there are stars left partly explored, we continue sampling edges that possibly belong to not yet fully explored stars. If there are no partly explored stars, we switch back to the search phase. We keep iterating until we found all $k$ stars of the graph.

Formally $Q_t \sim \mathrm{Unif} \big( \{ 1,\dots ,n \} \setminus \{ Q_1 ,\dots ,Q_{t-1} \} \big)$ in the search phases. Let $\tilde{X}_t$ denote the set of edges that can belong to partly explored stars up to time $t$. Then the queries of the subsequent search phase can be defined as $Q_t \sim \mathrm{Unif} \big( \tilde{X}_t \setminus \{ Q_1 ,\dots ,Q_{t-1} \} \big)$. Note that this stills fits the setting of Proposition~\ref{prop:error_prob_upper} with $K \leq k$ being random and $l_1 = l_2 = \dots = l_K =1$.

Analogously to what was done for $s$-stars  we consider a simple modification to facilitate the analysis: each time we are in a search phase we take at most $J$ queries. We choose $J$ such that the noiseless case procedure fails with small probability. Note that we perform at most $k$ search phases, and in each of them there are at least $s$ unexplored signal components. Thus, using essentially the same calculation as before, we get that by choosing $J = n/s \log 2k/\delta$ we ensure that the probability of not querying a signal coordinate in any of the search phases is at most $\delta/2$. Finally, choosing $\alpha_t ,\beta_t$ according to \eqref{prop:error_prob_upper} with $\delta$ replaced by $\delta /2$ yields $\P_S (\hat{S} \neq S) \leq \delta , \ \forall S \in \C$.

Note that the number of queries we perform in all of the search and refinement phases is at most $kJ$ and $2kp$, respectively. However, for the expected number of queries performed throughout the search phases we can get a slightly better upper bound, which is necessary to get a more accurate dependence on the parameter $k$. Recall that $\mathcal{E}_t$ denotes the event that we make an error in the test $\Psi_t$, i.e. $\Psi_t \neq \textbf{1}_S (Q_t), \ t=1,2,\dots$. Also, let $\mathcal{E}_0$ denote the event that there is a search phase in which we do not query any coordinate containing a signal, and $T_A$ denote the number of queries in search phases. Finally, let the number of queries in the $j$th search phase be $T_A^{(j)}$. Using the mean of the negative hypergeometric distribution, we have $\E_S \left(T_A^{(j)}\left| \cap_{t=0}^{T} \ \overline{\mathcal{E}}_t \right.\right) \leq n/ (s k_j)$, where $k_j$ is the number of unexplored stars in search phase $j$. Noting that $k_1 = k$ and $k_{j+1} < k_j, \ j=1,\dots ,K$ we have the following bound
\[
\E_S \left( T_A \left| \bigcap_{t=0}^{T} \ \overline{\mathcal{E}}_t \right. \right) = \sum_{j=1}^{K} \ \E_S \left( T_A^{(j)} \left| \bigcap_{t=0}^{T} \ \overline{\mathcal{E}}_t \right. \right) \leq \sum_{j=1}^{k} \frac{n}{js} \leq \frac{n}{s} (\log k +1) \ .
\]

Finally, through the law of total expectation we get
\[
\E_S \left( T_A \right) \leq \frac{n}{s} (\log k +1) + \delta kJ \ .
\]

We are now ready to compute a bound on the precision used by procedure, which is given by
\begin{align*}
\E_S \left( \sum_{j} \Gamma_j \right) \leq {} & \E_S \left( T_A \frac{2}{\mu^{2}} \log \frac{4k}{\delta} + |S| \frac{2}{\mu^{2}} \log \frac{8n}{\delta} + 2k(p-2) \frac{2}{\mu^{2}} \log \frac{8ks}{\delta} \right) \\
\leq {} & \frac{2}{\mu^2} \left( \left( \frac{n}{s} (\log k +1) + \delta kJ \right) \log \frac{4k}{\delta} + ks \log \frac{8n}{\delta} + k\sqrt{8n} \log \frac{8ks}{\delta} \right) \\
\leq {} & \frac{2}{\mu^2} \left(\frac{n(1+ \delta k)}{s} \left( \log \frac{4k}{\delta} \right)^2 + ks \log \frac{8n}{\delta} + k\sqrt{8n} \log \frac{8ks}{\delta} \right)\ .
\end{align*}

Combining this with the bound on the total precision available \eqref{eqn:budget} we get the following.
\begin{proposition} \label{prop:u_star_upper}
Let $\C$ be the class of unions of $k$ disjoint $s$-stars as defined above and suppose
\[
\mu \geq \sqrt{\frac{2n (1+ \delta k)}{sm} \left( \log \frac{4k}{\delta} \right)^2 + \frac{2ks}{m} \log \frac{8n}{\delta} + \frac{k\sqrt{32n}}{m} \log \frac{8ks}{\delta}} \ .
\]
The procedure above results in an estimator $\hat{S}$ satisfying both $\displaystyle{\max_{S\in\C}} \ \P_S (\hat{S} \neq S) \leq \delta$ and the precision budget \eqref{eqn:budget}.
\end{proposition}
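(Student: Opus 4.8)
The plan is to assemble the two conclusions bundled in the statement --- the worst-case error bound $\max_{S\in\C}\P_S(\hat S\neq S)\leq\delta$ and the precision constraint \eqref{eqn:budget} --- from the ingredients already prepared in the discussion preceding the proposition, and then to verify that the displayed lower bound on $\mu$ is precisely the algebraic condition that closes the budget side. Neither half needs new machinery; the content is bookkeeping.

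For the error bound I would split $\{\hat S\neq S\}$ over two sources of failure. First, by the hypergeometric tail estimate displayed above (equivalently $(1-s/n)^J\leq e^{-Js/n}$), the choice $J=(n/s)\log(2k/\delta)$ makes the probability that a single search phase exhausts its $J$ queries without ever touching a signal coordinate at most $\delta/(2k)$, so a union bound over the at most $k$ search phases bounds the probability that some search phase fails by $\delta/2$. Coupling the capped and uncapped procedures through common randomness, on the complement of this event the two runs coincide, so there the error event equals that of the uncapped procedure, which is an instance of the template of Proposition~\ref{prop:error_prob_upper} with $K\leq k$ and $l_1=\dots=l_K=1$; applying that proposition with $\delta$ replaced by $\delta/2$ bounds the probability of an SLRT error by $\delta/2$. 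Adding the two contributions gives $\P_S(\hat S\neq S)\leq\delta$ for every $S\in\C$, which is exactly why the per-test parameters were taken to be $\alpha_t=\delta/8n$, $\beta_t=\delta/4l$ in search phases and $\beta_t=\delta/8|S|$ in refinement phases.

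For the budget I would fix the per-measurement precision $\Gamma$ small enough that Proposition~\ref{prop:SLRT} applies, so that \eqref{eqn:prec_upper_0}--\eqref{eqn:prec_upper_1} bound the expected precision spent on each SLRT by $(2/\mu^2)\log(1/\beta_t)$ for a zero coordinate and $(2/\mu^2)\log(1/\alpha_t)$ for a signal coordinate. Then linearity splits the total precision over the at most $kJ$ queries of the search phases, the $ks$ signal coordinates of $S$, and the at most $2k(p-2)$ queries of the refinement phases; using the refined estimate $\E_S(T_A)\leq(n/s)(\log k+1)+\delta kJ$ for the expected number of search queries, the bound $2(p-2)<\sqrt{8n}$ coming from $n=\binom{p}{2}$, and $\log k+1\leq\log(4k/\delta)$ (valid since $\delta<1$), one recovers the three-term bound on $\E_S(\sum_j\Gamma_j)$ displayed just before the proposition. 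Finally, squaring the hypothesis on $\mu$ and noting $\sqrt{32n}=2\sqrt{8n}$ shows it is equivalent to $\mu^2$ being at least $2/m$ times that parenthesized expression, whence $\E_S(\sum_j\Gamma_j)\leq m$, i.e.\ \eqref{eqn:budget}.

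The step I expect to be the main obstacle is the conditional search-length estimate $\E_S(T_A)\leq(n/s)(\log k+1)+\delta kJ$: one must check that conditioning on the good event (all SLRTs correct and every search phase succeeding) is compatible with invoking the negative-hypergeometric mean $\E_S(T_A^{(j)}\mid\,\cdot\,)\leq n/(sk_j)$ in each search phase, and that the number $k_j$ of still-unexplored stars is strictly decreasing so that $\sum_{j=1}^{k}1/k_j\leq\sum_{j=1}^{k}1/j\leq\log k+1$; the contribution off the good event is dispatched crudely via $T_A\leq kJ$ weighted by a probability at most $\delta$, and the rest is direct substitution.
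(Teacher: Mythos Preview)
Your proposal is correct and follows essentially the same route as the paper: split the error event into ``some search phase fails to hit a signal within $J$ queries'' (controlled by $\delta/2$ via the union bound over at most $k$ phases) plus ``some SLRT errs'' (controlled by Proposition~\ref{prop:error_prob_upper} with $\delta\to\delta/2$), and bound the precision by the three-term expression using the conditional estimate $\E_S(T_A\mid\text{no errors})\leq(n/s)(\log k+1)$ plus the crude $T_A\leq kJ$ on the complement. Your identification of the conditional search-length bound as the only step requiring care is accurate, and your handling of it matches the paper's.
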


The result of the above proposition is perhaps a bit difficult to digest, but provided $s$ and $k$ are small relative to $n$ the first term in the right side dominates the bound.

\begin{corollary}[unions of $s$-stars] \label{cor:u_star_upper}
Consider the setting of Proposition~\ref{prop:u_star_upper}. Suppose $s,k \to \infty$ such that $s=o\left(\frac{\sqrt{n\log k}}{k\log n}\right)$.

$(i)$ If
\[
\mu \geq \sqrt{\frac{4n}{sm} \log^2 4k} \ ,
\]
the procedure above gives an estimator $\hat{S}$ such that $\displaystyle{\lim_{n \to \infty} \max_{S \in \C}} \ \P_S (\hat{S} \neq S) = 0$, and that satisfies \eqref{eqn:budget}.

$(ii)$ If
\[
\mu \geq \sqrt{\frac{4n}{sm} \log^2 ks  }\ ,
\]
the procedure above gives an estimator $\hat{S}$ such that $\displaystyle{\lim_{n \to \infty} \max_{S \in \C}} \ \E_S (| \hat{S} \triangle S|) = 0$, and that satisfies \eqref{eqn:budget}.
\end{corollary}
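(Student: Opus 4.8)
The plan is to obtain both parts from the precision accounting carried out just before Proposition~\ref{prop:u_star_upper}, by instantiating the free error probability with a suitable vanishing sequence $\delta=\delta_n\to 0$ and then invoking the sparsity hypothesis to show that the search-phase term dominates the expected precision. The key point is to use the \emph{intermediate} precision estimate appearing in that derivation, namely
$$\E_S\Big(\sum_j\Gamma_j\Big)\le \frac{2}{\mu^2}\Big(\big(\tfrac ns(\log k+1)+\delta kJ\big)\log\tfrac{4k}{\delta}+ks\log\tfrac{8n}{\delta}+k\sqrt{8n}\log\tfrac{8ks}{\delta}\Big),\qquad J=\tfrac ns\log\tfrac{2k}{\delta},$$
rather than the simplified form $\tfrac{n(1+\delta k)}{s}\big(\log\tfrac{4k}{\delta}\big)^2+\dots$ used to state Proposition~\ref{prop:u_star_upper}. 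For part $(i)$ I would fix $\delta_n\to0$ with $\delta_n k\to 0$ and $\log(1/\delta_n)=(1+o(1))\log k$ (for instance $\delta_n=1/(k\log^2 k)$); then $\max_{S\in\C}\P_S(\hat S\ne S)\le\delta_n\to 0$ by Proposition~\ref{prop:u_star_upper}, so it only remains to verify the budget \eqref{eqn:budget}. For part $(ii)$ I would instead require $ks\,\delta_n\to0$, since \eqref{eqn:hamming_vs_prob} gives $\E_S(|\hat S\triangle S|)\le 2ks\,\P_S(\hat S\ne S)\le 2ks\,\delta_n\to0$; the analogous choice is $\delta_n=1/(ks\log^2(ks))$, for which again $\delta_n k\to0$ and $\log(1/\delta_n)=(1+o(1))\log(ks)$.

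With such $\delta_n$ the first term in the displayed bound is the leading one. Because $\delta_n k\to0$ one has $\delta_n kJ\log\tfrac{4k}{\delta_n}=O(n/s)$, which is negligible; and $\tfrac ns(\log k+1)\log\tfrac{4k}{\delta_n}$ equals $\tfrac ns\cdot 2\log^2 k\,(1+o(1))$ in case $(i)$ (since $\log\tfrac{4k}{\delta_n}=2\log k\,(1+o(1))$) and is at most $\tfrac ns\cdot 2\log^2(ks)\,(1+o(1))$ in case $(ii)$ (using $(\log k+1)\log\tfrac{4k}{\delta_n}=(2\log^2 k+\log k\log s)(1+o(1))\le 2(\log k+\log s)^2(1+o(1))$). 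The remaining two terms, $ks\log\tfrac{8n}{\delta_n}=\Theta(ks\log n)$ and $k\sqrt{8n}\log\tfrac{8ks}{\delta_n}=\Theta(k\sqrt n\log n)$, are each $o\big(\tfrac ns\log^2 k\big)$: the hypothesis $s=o\big(\tfrac{\sqrt{n\log k}}{k\log n}\big)$ gives both $s=o(\sqrt n)$ and, multiplying by $k\log n$, $ks\log n=o(\sqrt{n\log k})$, whence $ks\log n=o(\sqrt{n\log k})=o\big(\tfrac ns\log^2 k\big)$ (since $\tfrac ns\log^2 k/\sqrt{n\log k}=\tfrac{\sqrt n}{s}\log^{3/2}k\to\infty$) and $\tfrac{k\sqrt n\log n}{\frac ns\log^2 k}=\tfrac{ks\log n}{\sqrt n\log^2 k}=o\big(\tfrac{\sqrt{\log k}}{\log^2 k}\big)\to0$. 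Hence $\E_S(\sum_j\Gamma_j)\le\frac{4n\log^2 k}{\mu^2 s}(1+o(1))$ in case $(i)$ and $\le\frac{4n\log^2(ks)}{\mu^2 s}(1+o(1))$ in case $(ii)$, so — recalling $\log^2 4k=(1+o(1))\log^2 k$ — imposing \eqref{eqn:budget} is, up to lower-order factors, precisely the stated condition on $\mu$, and the corollary follows.

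I expect the main obstacle to be exactly this constant-level bookkeeping. One must resist simply plugging into the bound used to state Proposition~\ref{prop:u_star_upper}: its factor $(1+\delta k)\big(\log\tfrac{4k}{\delta}\big)^2$ arises from crudely replacing the expected number of search queries $\tfrac ns(\log k+1)$ by $\tfrac ns\log\tfrac{4k}{\delta}$, and would cost roughly a factor $2$ in $\mu^2$. Instead $\delta_n$ must be tuned to the narrow window in which $\delta_n k\to0$ (so the term $\delta_n kJ$, which comes from the $\delta/2$ probability that some search phase fails to hit a signal, is negligible) while $\log(1/\delta_n)$ remains only $(1+o(1))\log k$, respectively $(1+o(1))\log(ks)$. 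A secondary technical point is showing that the refinement-phase contribution $k\sqrt{8n}\log\tfrac{8ks}{\delta_n}$ is dominated; this is where the factor $k\log n$ in the denominator of the sparsity hypothesis is genuinely needed, and is the only place the precise (rather than merely $s=o(\sqrt n)$) form of the hypothesis enters.
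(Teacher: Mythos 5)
Your proposal follows what is essentially the paper's (implicit) route: the paper offers no separate proof of this corollary, intending it to follow from the precision accounting of Section~\ref{sec:u_star} by instantiating a vanishing $\delta_n$ and observing that, under $s=o\bigl(\sqrt{n\log k}/(k\log n)\bigr)$, the search-phase term dominates; your domination estimates for the terms $ks\log\frac{8n}{\delta_n}$ and $k\sqrt{8n}\log\frac{8ks}{\delta_n}$ are exactly the intended use of the sparsity hypothesis and are correct. You are also right, and more careful than the paper, on the constant-level point: plugging directly into the bound displayed in Proposition~\ref{prop:u_star_upper} forces either $\delta k\to\infty$ or $\log\frac{4k}{\delta}\geq 2\log k-O(1)$, and in the best case yields roughly $\mu^2\gtrsim 8n\log^2 k/(sm)$, so returning to the intermediate estimate with $\E_S(T_A)\leq\frac ns(\log k+1)+\delta kJ$ is genuinely needed to recover the factor $4$.

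The one soft spot is your closing step, where the budget \eqref{eqn:budget} is declared satisfied ``up to lower-order factors.'' The budget is a hard constraint $\E_S(\sum_j\Gamma_j)\leq m$, and with $\delta_n=1/(k\log^2 k)$ one has $\log\frac{4k}{\delta_n}=2\log k+2\log\log k+\log 4$, so the leading bracket is $2\log^2k+2\log k\log\log k+O(\log k)$, while the slack that the stated threshold provides over $\frac{4n}{sm}\log^2 k$ is only $4\log 4\cdot\log k+O(1)$; hence your bound gives $\E_S(\sum_j\Gamma_j)\leq m\bigl(1+\Theta(\tfrac{\log\log k}{\log k})\bigr)$, which strictly exceeds $m$ for large $k$ (and no other choice of $\delta_n$ removes this: controlling $\delta kJ$ forces $\log(1/\delta)\geq\log k+\Omega(\log\log k)$, so the excess $\Omega(\log k\log\log k)$ is unavoidable from this bound). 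The same tension is present in the paper's own statement — the corollary's exact constant does not follow verbatim from Proposition~\ref{prop:u_star_upper} either — so this is not a defect peculiar to your argument; but to close it one must either allow an extra $(1+o(1))$ (or a marginally larger constant) in the threshold for $\mu$, or accept $\E_S(\sum_j\Gamma_j)\leq m(1+o(1))$ in place of the hard budget. A parallel caveat applies to part $(ii)$ when $s$ grows extremely slowly (e.g.\ $\log s=o(\log\log k)$), where the slack $3\log k\log s$ need not absorb the $2\log k\log\log(ks)$ surplus.
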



\subsubsection{$s$-submatrices} \label{sec:submat}

In this setting the entries of $\vec{x}$ are identified with the elements a matrix $M \in \R^{n_1 \times n_2}$ (let the number of elements in the matrix be $n=n_1n_2$). We assume the support set $S$ is a subset of $\{1,\ldots,n_1\}\times\{1,\ldots,n_2\}$ and furthermore we assume it corresponds to a submatrix of size $s$. Formally, the class of all $s$-submatrices is defined as
\[
\C = \left\{ S_1\times S_2: S_1\subseteq\{1,\ldots,n_1\}, S_2\subseteq\{1,\ldots,n_2\},\text{ and } |S_1| \cdot |S_2| =s\right\} \ ,
\]
where $S_1 \times S_2$ denotes the cartesian product of $S_1$ and $S_2$. Note that if either $n_1$ or $n_2$ is of the same order as $n$, then this setting becomes similar to the unstructured case, but if $n_1,n_2\approx\sqrt{n}$ there is a significant amount of structure on can take advantage of. Consider the following simple noiseless support recovery procedure: in a first phase randomly search the coordinates of $\vec{x}$ to find a non-zero coordinate. Once such a coordinate is found explore coordinates of $\vec{x}$ corresponding to the row and column of the non-zero coordinate found in the previous phase. Clearly, this fits the general procedure described in the setting of Proposition~\ref{prop:error_prob_upper}, with $K=1$ and $l_1=l=1$. Like in the case of $s$-stars we stop the random search in the first phase after $J=(n/s)\log(2/\delta)$ queries to facilitate the analysis. For the expected amount of precision used we have
\begin{align*}
\E_S \left( \sum_{j} \Gamma_j \right) \leq {} & \frac{2}{\mu^2} \left( J \log \frac{4}{\delta} + s \log \frac{8n}{\delta} + (n_1 + n_2) \log \frac{8s}{\delta} \right)\\
\leq {} & \frac{2}{\mu^2} \left(\frac{n}{s} \left( \log \frac{4}{\delta} \right)^2 + s \log \frac{8n}{\delta} + (n_1 + n_2) \log \frac{8s}{\delta} \right)\ .
\end{align*}

\begin{proposition} \label{prop:submat_upper}
Let $\C$ denote the class of submatrices as defined above with and suppose we have
\[
\mu \geq \sqrt{\frac{2n}{sm} \left( \log \frac{4}{\delta} \right)^2 + \frac{2s}{m} \log \frac{8n}{\delta} + \frac{2(n_1 + n_2)}{m} \log \frac{8s}{\delta}} \ .
\]
The procedure above results in an estimator $\hat{S}$ satisfying both $\displaystyle{\max_{S\in\C}} \ \P_S (\hat{S} \neq S) \leq \delta$ and the precision budget \eqref{eqn:budget}.
\end{proposition}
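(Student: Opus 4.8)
The plan is to mirror the argument used for $s$-stars in Section~\ref{sec:star}, since the proposed $s$-submatrix procedure is again an instance of the two-phase template of Proposition~\ref{prop:error_prob_upper} with $K=1$ and $l_1=l=1$, only with a two-dimensional refinement phase (a full row plus a full column) in place of the star neighbourhood. Throughout I take the per-measurement precision $\Gamma$ small enough that Proposition~\ref{prop:SLRT} applies, so that the bounds \eqref{eqn:prec_upper_0}--\eqref{eqn:prec_upper_1} govern the expected precision spent by each SLRT.

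First I would control the probability of error. As for $s$-stars, the randomized search is truncated after $J=(n/s)\log(2/\delta)$ queries; since $|S|=s$ and the search queries are drawn without replacement, $\P_S(\forall t\le J:\ Q_t\notin S)=\binom{n-s}{J}/\binom{n}{J}\le(1-s/n)^J\le\delta/2$, so with probability at least $1-\delta/2$ the search phase locates a non-zero coordinate, say at matrix position $(i,j)$ with $i\in S_1,\ j\in S_2$. On this event, if in addition all SLRTs are correct, the refinement phase recovers $S_2$ (the non-zero entries of row $i$) and $S_1$ (the non-zero entries of column $j$), hence identifies $S=S_1\times S_2$; thus the stopping rule \eqref{def:T} fires and \eqref{def:S_hat} returns $\hat S=S$. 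Applying Proposition~\ref{prop:error_prob_upper} with $\delta$ replaced by $\delta/2$, and adding the $\delta/2$ failure probability of the truncated search, yields $\P_S(\hat S\neq S)\le\delta$ for every $S\in\C$.

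Next I would bound the expected precision by splitting the queries into three groups. (a) Zero-valued coordinates queried in the search phase: at most $J$ of them, each costing at most $\tfrac{2}{\mu^2}\log\tfrac{1}{\beta_t}=\tfrac{2}{\mu^2}\log\tfrac{4}{\delta}$, since after the $\delta/2$ substitution the search-phase type-II level is $\delta/4$. (b) Non-zero-valued coordinates queried anywhere: because no coordinate is sampled twice, there are at most $|S|=s$ of them, each costing at most $\tfrac{2}{\mu^2}\log\tfrac{1}{\alpha_t}=\tfrac{2}{\mu^2}\log\tfrac{8n}{\delta}$ by \eqref{eqn:prec_upper_1}. (c) Zero-valued coordinates queried in the refinement phase: at most $n_1+n_2$ of them (a row and a column), each costing at most $\tfrac{2}{\mu^2}\log\tfrac{1}{\beta_t}=\tfrac{2}{\mu^2}\log\tfrac{8s}{\delta}$. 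Summing, and using $J\log\tfrac{4}{\delta}=\tfrac{n}{s}\log\tfrac{2}{\delta}\log\tfrac{4}{\delta}\le\tfrac{n}{s}\big(\log\tfrac{4}{\delta}\big)^2$, gives
\[
\E_S\Big(\sum_j\Gamma_j\Big)\le\frac{2}{\mu^2}\left(\frac{n}{s}\Big(\log\frac{4}{\delta}\Big)^2+s\log\frac{8n}{\delta}+(n_1+n_2)\log\frac{8s}{\delta}\right),
\]
which is exactly the display preceding the proposition. Imposing that this be at most the precision budget $m$ of \eqref{eqn:budget} and solving for $\mu$ produces the stated sufficient condition.

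I do not anticipate a genuine obstacle: the argument is a direct transcription of the $s$-stars analysis. The only points requiring care are (i) checking that a single row together with a single column through a discovered non-zero entry really does pin down the submatrix, so that $T<\infty$ and $\hat S=S$ on the good event; and (ii) the bookkeeping that assigns each query the correct error level — $\alpha_t$ for non-zero entries via \eqref{eqn:prec_upper_1}, $\beta_t$ for zero entries via \eqref{eqn:prec_upper_0} — together with the observation that sampling without replacement caps the number of non-zero queries at $s$ regardless of phase.
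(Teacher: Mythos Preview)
Your proposal is correct and follows essentially the same approach as the paper: the paper also truncates the random search at $J=(n/s)\log(2/\delta)$, invokes Proposition~\ref{prop:error_prob_upper} with $\delta/2$, and bounds the expected precision by the same three-term sum $\tfrac{2}{\mu^2}\big(J\log\tfrac{4}{\delta}+s\log\tfrac{8n}{\delta}+(n_1+n_2)\log\tfrac{8s}{\delta}\big)$ before combining with \eqref{eqn:budget}. Your decomposition into zero/non-zero queries by phase is exactly the bookkeeping the paper uses (inherited from the $s$-stars analysis), and your two ``points requiring care'' are the only places where any argument is needed.
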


In case of sparse signals, that is when $s \ll n$, and both $n_1 \approx \sqrt{n}$ and $n_2 \approx \sqrt{n}$ the first term on the right side dominates this bound. When $\max \{ n_1 ,n_2 \}$ is at the order of $n$, the situation becomes similar to the unstructured case, and the third term dominates the bound (so one recovers essentially the result in Corollary~\ref{cor:vanilla_upper}). Concerning the former case one has the following result.

\begin{corollary}[$s$-submatrices] \label{cor:submat_upper}
Consider the setting of Proposition~\ref{prop:submat_upper}.  Assume $n_1=n_2=\sqrt{n}$ and $s=o(\sqrt{n}/\log n)$ as $n\to\infty$. Let $\omega_n \to \infty$ be arbitrary. 

$(i)$ If
\[
\mu \geq \omega_n \sqrt{\frac{n}{sm}} \ ,
\]
the procedure above gives an estimator $\hat{S}$ such that $\displaystyle{\lim_{n \to \infty} \max_{S \in \C}} \ \P_S (\hat{S} \neq S) = 0$, and that satisfies \eqref{eqn:budget}.

$(ii)$ If
\[
\mu \geq \sqrt{\frac{2n}{sm} \left( \log s + \omega_n \right)^2} \ ,
\]
the procedure above gives an estimator $\hat{S}$ such that $\displaystyle{\lim_{n \to \infty} \max_{S \in \C}} \ \E_S (|\hat{S} \triangle S|) = 0$, and that satisfies \eqref{eqn:budget}.
\end{corollary}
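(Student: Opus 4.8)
\emph{Proof sketch.} Both parts are obtained by instantiating Proposition~\ref{prop:submat_upper} with a suitable sequence $\delta=\delta_n\to 0$, after substituting $n_1=n_2=\sqrt n$ so that the three terms under the square root become $T_1=\frac{2n}{sm}\big(\log(4/\delta_n)\big)^2$, $T_2=\frac{2s}{m}\log(8n/\delta_n)$ and $T_3=\frac{4\sqrt n}{m}\log(8s/\delta_n)$. For part $(ii)$ one additionally invokes the right inequality in \eqref{eqn:hamming_vs_prob}, namely $\E_S(|\hat S\triangle S|)\le 2s\,\P_S(\hat S\neq S)$ (valid since $|S|=s$ for every $S\in\C$), which reduces controlling the expected Hamming distance to making the procedure achieve probability of error $\delta_n$ with $s\delta_n\to 0$. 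In both parts we may also assume without loss of generality that $\omega_n\le\log n$, since replacing $\omega_n$ by $\min\{\omega_n,\log n\}$ only weakens the hypothesis on $\mu$; consequently any $\delta_n$ we pick will satisfy $\log(1/\delta_n)=O(\log n)$.

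For part $(i)$ I would take $\delta_n=\exp(-\sqrt{\omega_n})$, so that $\delta_n\to 0$ while $\log(1/\delta_n)=\sqrt{\omega_n}=o(\omega_n)$. Then $\log(4/\delta_n)=\log 4+\sqrt{\omega_n}\le \omega_n/2$ for $n$ large, which gives $T_1\le \tfrac12\,\omega_n^2\,\frac{n}{sm}$. For the remaining terms, the sparsity hypothesis $s=o(\sqrt n/\log n)$ gives $s^2=o(n/\log^2 n)$, and combined with $\log(1/\delta_n)=O(\log n)$ one checks that $T_2=O\!\big(\tfrac{s\log n}{m}\big)=o\!\big(\tfrac{n}{sm}\big)$ and $T_3=O\!\big(\tfrac{\sqrt n\log n}{m}\big)=o\!\big(\tfrac{n}{sm}\big)$, hence both are $o\!\big(\omega_n^2\tfrac{n}{sm}\big)$. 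Adding up, $T_1+T_2+T_3\le \omega_n^2\frac{n}{sm}$ for $n$ large, so $\mu\ge\omega_n\sqrt{n/(sm)}\ge\sqrt{T_1+T_2+T_3}$ and Proposition~\ref{prop:submat_upper} applies with $\delta=\delta_n$, yielding $\max_{S\in\C}\P_S(\hat S\neq S)\le\delta_n\to 0$.

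For part $(ii)$ I would instead take $\delta_n=\tfrac1s\exp(-\omega_n/2)$, so that $s\delta_n=\exp(-\omega_n/2)\to 0$ and, by \eqref{eqn:hamming_vs_prob}, $\E_S(|\hat S\triangle S|)\le 2\exp(-\omega_n/2)\to 0$ once the procedure attains error probability $\delta_n$. Here $\log(4/\delta_n)=\log 4+\log s+\omega_n/2=(\log s+\omega_n)-(\omega_n/2-\log 4)$ with $\omega_n/2-\log 4\to\infty$, so squaring gives $T_1\le \frac{2n}{sm}(\log s+\omega_n)^2-\frac{n}{2sm}\,\omega_n^2$ for $n$ large. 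The leftover slack $\frac{n}{2sm}\omega_n^2$ then absorbs $T_2$ and $T_3$ by exactly the estimates used in part $(i)$ (again using $s=o(\sqrt n/\log n)$ and $\log(1/\delta_n)=O(\log n)$). Hence $T_1+T_2+T_3\le\frac{2n}{sm}(\log s+\omega_n)^2$ for $n$ large, so $\mu\ge\sqrt{\tfrac{2n}{sm}(\log s+\omega_n)^2}$ implies the hypothesis of Proposition~\ref{prop:submat_upper} with $\delta=\delta_n$, which completes the proof.

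The argument is essentially routine bookkeeping; the two points that require a little care are $(a)$ choosing $\delta_n$ to tend to zero slowly enough that $\log(1/\delta_n)$ does not spoil the dominant term $T_1$ — which forces $\delta_n$ to be defined in terms of the arbitrary sequence $\omega_n$ — and $(b)$ verifying that the auxiliary terms $T_2$ (of order $s$) and $T_3$ (of order $\sqrt n$) are negligible compared with the dominant term of order $n/s$, which is exactly where the assumptions $n_1=n_2=\sqrt n$ and $s=o(\sqrt n/\log n)$ are used.
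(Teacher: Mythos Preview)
Your proposal is correct and follows exactly the approach the paper intends: the paper leaves this corollary unproved, merely remarking before it that ``in case of sparse signals\ldots the first term on the right side dominates this bound,'' and you have carefully carried out the bookkeeping that makes this precise. Your choices of $\delta_n$ in the two parts, the reduction via \eqref{eqn:hamming_vs_prob} for part~$(ii)$, and the verification that $T_2,T_3=o(n/(sm))$ using $s=o(\sqrt{n}/\log n)$ are all sound.
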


\begin{remark}
In all settings considered one can get exactly the same results with some degree of adaptivity to sparsity level, characterized by $s$ and $k$. For instance, if one considers the class of all $k$ and $k-1$ unions of $s$-intervals or $s$-stars the results of Corollaries~\ref{cor:u_int_upper} and \ref{cor:u_star_upper} still hold. Likewise mild adaptivity to $s$ is also possible. Furthermore, all the results above will hold if the empty set if added to the class $\C$ under consideration.
\end{remark}



\section{Lower Bounds} \label{sec:lower_bounds}

In this section we derive bounds for the signal strength $\mu$ for each special case considered earlier, such that if $\mu$ falls below these bounds, reliably recovering the support set $S \in \C$ is impossible. First we derive bounds for non-adaptive sensing for comparison purposes. For the non-adaptive case we derive lower bounds considering the error metric $\P_S (\hat{S} \neq S)$. These are lower bounds considering the error metric $\E_S (|\hat{S} \triangle S|)$ as well, since the latter dominates the former. The bounds we present for the non-adaptive case are not sharp, particularly when the signal is not sparse. Nonetheless in the sparse setting they capture the essence of the difficulty of support recovery, and illustrate well the gains one might achieve by using adaptive sensing procedures.

Second, we derive lower bounds for adaptive sensing, which will show the near-optimality of the proposed procedure for the classes considered. In this case we are considering the Hamming-distance as a metric for evaluating the performance of an estimator. It is not yet clear if similar methods can be used to construct lower bounds when considering $\P_S (\hat{S} \neq S)$ as an error metric in general. The proofs of the results of this section make use of tools from \cite{AS_Rui_2012} and \cite{Tsybakov_2009}.


\subsection{Non-Adaptive Sensing} \label{sec:non_adapt_lower}

In this subsection we consider non-adaptive sensing for support recovery. The problem setting is the same as in Section~\ref{sec:setting}, the only difference being that in the non-adaptive setting we have to specify $\{ A_j, \Gamma_j \}_{j=1,2,\dots}$ before any observations are made. All the bounds presented here are based on Proposition 2.3 in in \cite{Tsybakov_2009}, which states
\begin{lemma} \label{lemma:tsybakov}
Let $\P_0, \dots, \P_M$ be probability measures on $(\mathcal{X},\mathcal{A})$ and let $\Psi: \mathcal{X} \to \{ 0, \dots ,M \}$ be any $\mathcal{A}$-measurable function. If
\[
\frac{1}{M} \displaystyle{\sum_{j=1}^{M}} D( \P_j \| \P_0 ) \leq a
\]
then
\[
\displaystyle{\max_{j=0, \dots ,M}} \P_j (\Psi \neq j) \geq \displaystyle{\sup_{0< \tau <1}} \left( \frac{\tau M}{1+ \tau M} \left( 1+ \frac{a + \sqrt{a/2}}{\log \tau} \right) \right) \ .
\]
\end{lemma}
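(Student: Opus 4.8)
The statement is the Kullback--Leibler version of Fano's inequality, and the plan is to prove it by a change-of-measure argument in the spirit of \cite{Tsybakov_2009}. Write $A_j=\{\Psi=j\}$ for $j=0,1,\dots,M$; since $\Psi$ takes values in $\{0,\dots,M\}$ these sets partition $\mathcal{X}$, and $p_{e,j}:=\P_j(\Psi\neq j)=1-\P_j(A_j)$. Let $p^*:=\max_{0\le j\le M}p_{e,j}$ be the quantity we wish to lower bound, and fix an arbitrary $\tau\in(0,1)$, to be optimised at the end. The guiding intuition is that if $\P_j$ is KL-close to $\P_0$ then $\P_j(A_j)$ cannot greatly exceed $\P_0(A_j)$, while $\frac1M\sum_{j=1}^{M}\P_0(A_j)$ is tiny because the $A_j$ are disjoint.

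The first step is a likelihood-ratio truncation. For $j=1,\dots,M$ set $\Lambda_j=\mathrm{d}\P_j/\mathrm{d}\P_0$ on the set where the $\P_0$-density is positive (the $\P_j$-mass of the complement only helps and can be folded into the second term below). On $\{\Lambda_j\le \tau^{-1}\}$ one has $\mathrm{d}\P_j\le\tau^{-1}\mathrm{d}\P_0$, so
\begin{equation*}
\P_j(A_j)\ \le\ \tfrac1\tau\,\P_0(A_j)\ +\ \P_j\!\big(\log\Lambda_j>\log\tfrac1\tau\big)\ .
\end{equation*}
Averaging over $j=1,\dots,M$, using $\sum_{j=1}^{M}\P_0(A_j)=1-\P_0(A_0)=\P_0(\Psi\neq 0)=p_{e,0}\le p^*$ (here disjointness of the $A_j$ enters), and bounding $\frac1M\sum_{j=1}^M\P_j(A_j)=1-\frac1M\sum_{j=1}^M p_{e,j}\ge 1-p^*$, we obtain
\begin{equation*}
1-p^*\ \le\ \frac{p^*}{\tau M}\ +\ \frac1M\sum_{j=1}^{M}\P_j\!\big(\log\Lambda_j>\log\tfrac1\tau\big)\ .
\end{equation*}

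The heart of the argument is to show the averaged ``large-likelihood-ratio'' probability is at most $(a+\sqrt{a/2})/\log(1/\tau)$. Since $\E_j[\log\Lambda_j]=D(\P_j\|\P_0)=:d_j\ge 0$ but $\log\Lambda_j$ is not sign-definite, I would apply Markov's inequality to $(\log\Lambda_j)^+$, writing $\E_j[(\log\Lambda_j)^+]=d_j+\E_j\big[(\log\Lambda_j^{-1})^+\big]$; the delicate term $\E_j[(\log\Lambda_j^{-1})^+]$ is bounded by $\sqrt{d_j/2}$ by comparing the expected positive part of the log-deficit with the Hellinger affinity $\E_j[\Lambda_j^{-1/2}]$ and then invoking $1-\E_j[\Lambda_j^{-1/2}]\le\tfrac12 D(\P_j\|\P_0)$. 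This gives $\P_j(\log\Lambda_j>\log\tfrac1\tau)\le(d_j+\sqrt{d_j/2})/\log(1/\tau)$ for each $j$; averaging and applying Jensen's inequality ($\frac1M\sum_j\sqrt{d_j}\le\sqrt{\frac1M\sum_j d_j}\le\sqrt a$) together with the hypothesis $\frac1M\sum_j d_j\le a$ yields the claim.

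Combining the two bounds gives $1-p^*\le \frac{p^*}{\tau M}+\frac{a+\sqrt{a/2}}{\log(1/\tau)}$; solving for $p^*$ produces
\begin{equation*}
p^*\ \ge\ \frac{\tau M}{1+\tau M}\Big(1+\frac{a+\sqrt{a/2}}{\log\tau}\Big)
\end{equation*}
for every $\tau\in(0,1)$, and taking the supremum over $\tau$ completes the proof. I expect the main obstacle to be the one-sided deviation bound for $\log\Lambda_j$: a crude application of Markov's inequality, or of Cauchy--Schwarz, loses the constant, so obtaining precisely the $\sqrt{a/2}$ term requires the sharp chain of inequalities between the expected positive log-deficit, the Hellinger affinity and the KL divergence; a minor secondary point is the measure-theoretic care needed when $\P_0$ fails to be absolutely continuous with respect to $\P_j$.
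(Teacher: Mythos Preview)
The paper does not actually prove this lemma: it is quoted verbatim as Proposition~2.3 of \cite{Tsybakov_2009} and used as a black box. So there is no ``paper's own proof'' to compare against; the relevant benchmark is Tsybakov's argument, and your outline follows it faithfully in structure (likelihood-ratio truncation, averaging over $j$ using disjointness of the $A_j$, Markov on $(\log\Lambda_j)^+$, then solving for $p^*$).

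There is, however, a genuine gap in the step you flag as delicate. You propose to bound $\E_j[(\log\Lambda_j^{-1})^+]$ by passing through the Hellinger affinity and using $1-\E_j[\Lambda_j^{-1/2}]\le \tfrac12 d_j$. But that chain yields a quantity of order $d_j$, not $\sqrt{d_j/2}$: even if you succeed in showing $\E_j[(\log\Lambda_j^{-1})^+]\le 2\bigl(1-\E_j[\Lambda_j^{-1/2}]\bigr)$ (which itself is not obvious, since $\Lambda_j^{-1/2}-1$ changes sign), the Hellinger--KL inequality then gives only $\le d_j$, and no square root appears. The argument that produces exactly $\sqrt{d_j/2}$ goes through total variation and Pinsker, not Hellinger. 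Concretely, writing the expectation under $\P_0$ and using $y\log(1/y)\le 1-y$ for $y\in(0,1]$ (equivalently $\log(1/y)\le 1/y-1$),
\[
\E_j\bigl[(\log\Lambda_j)^-\bigr]=\E_0\bigl[\Lambda_j\,(\log(1/\Lambda_j))^+\bigr]\le \E_0\bigl[(1-\Lambda_j)^+\bigr]=\|\P_0-\P_j\|_{TV}\le \sqrt{d_j/2},
\]
the last step being Pinsker's inequality. With this correction your decomposition $\E_j[(\log\Lambda_j)^+]=d_j+\E_j[(\log\Lambda_j)^-]\le d_j+\sqrt{d_j/2}$ goes through, averaging and Jensen give $a+\sqrt{a/2}$, and the rest of your argument is correct as written.
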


We can use this result directly to get general lower bounds for $\mu$ in the non-adaptive setting. First let $\P_0, \dots ,\P_M$ be the probability measures induced by the sampling $\vec{x}$ with parameters $\{ A_j , \Gamma_j \}_{j=1,2,\dots}$, when the support sets are $S_0, \dots ,S_M$ respectively, where $S_i \in \C$. We take $S_0, \dots ,S_M$ to be all the support sets in $\C$, so that $M = |\C|-1$. For fixed $S_k ,S_l , \ k \neq l$ we have $D( \P_k \| \P_l ) = \mu^{2} /2 \sum_{j: A_j \in S_k \triangle S_l} \Gamma_j$. Let us define $b_i = \sum_{j: A_j =i} \Gamma_j$. Then
\[
\displaystyle{\sum_{j=1}^{M}} D( \P_j \| \P_0 ) = \frac{\mu^{2}}{2} \displaystyle{\sum_{S' \in \C \setminus \{ S_0 \}} \ \sum_{i \in S_0 \triangle S'}} \ b_i \ .
\]

We need to evaluate the quantity above. Since we can choose any $S_0 \in \C$ we can consider the set which makes the hypothesis test harder. On the other hand, the measurement budget constraint \eqref{eqn:budget} implies that $\sum_i b_i \leq m$. This yields the following optimization problem
\[
\displaystyle{\max_{b \in \R_{+,0}^{n}: \ ||b||_1 \leq m} \ \min_{S \in \C} \ \sum_{S' \in \C \setminus \{ S \}} \ \sum_{i \in S \triangle S'}} \ b_i \ .
\]
where $b = (b_1, \dots ,b_n)^T$. The solution of this problem can be found explicitly if the class $\C$ under consideration has the following symmetry property (as introduced in \cite{AS_Rui_2012}).

\begin{definition} \label{def:symm_class}
Let $S \in \C$ be drawn uniformly at random. If for all $i \in \{ 1, \dots ,n \}$ we have $\P (i \in S) = s/n$ the class $\C$ is symmetric.
\end{definition}

We have the following proposition, proved in the Appendix.
\begin{proposition} \label{prop:prec_symm_classes}
Suppose $\C$ is symmetric. Then
\[
\displaystyle{\max_{b \in \R_{+,0}^{n}: \ ||b||_1 \leq m} \ \min_{S \in \C} \ \sum_{S' \in \C \setminus \{ S \}} \ \sum_{i \in S \triangle S'}} \ b_i
\]
is attained when $b_i = m/n, \ i= 1, \dots ,n$.
\end{proposition}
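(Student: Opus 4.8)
The plan is to turn the inner double sum into a single weighted sum $\sum_i b_i c_i(S)$, use the symmetry of $\C$ to pin down the weights $c_i(S)$, and then finish with a one-line averaging argument. Since the term $S'=S$ contributes nothing, $\sum_{S' \in \C \setminus \{S\}} \sum_{i \in S \triangle S'} b_i$ equals $\sum_{i=1}^n b_i\,\bigl|\{S' \in \C : i \in S \triangle S'\}\bigr|$. The coefficient of $b_i$ depends only on whether $i \in S$: for $i \in S$ it is the number of $S' \in \C$ with $i \notin S'$, and for $i \notin S$ it is the number of $S' \in \C$ with $i \in S'$. By Definition~\ref{def:symm_class}, drawing $S'$ uniformly from $\C$ gives $\P(i \in S') = s/n$, so $|\{S' \in \C : i \in S'\}| = |\C| s/n$ and $|\{S' \in \C : i \notin S'\}| = |\C|(1 - s/n)$ — crucially, independent of $i$. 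Writing $B_S = \sum_{i \in S} b_i$, and noting it suffices to treat $\|b\|_1 = m$ (the objective is non-decreasing in each $b_i$ since all coefficients are nonnegative), the inner sum becomes
\[
|\C|\left[\left(1 - \tfrac{s}{n}\right) B_S + \tfrac{s}{n}\,(m - B_S)\right] = |\C|\left[\tfrac{sm}{n} + \left(1 - \tfrac{2s}{n}\right) B_S\right].
\]

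Next I would minimize over $S \in \C$. The expression is affine in $B_S$, so the minimizing $S$ is the one making $\left(1 - \tfrac{2s}{n}\right) B_S$ as small as possible — the minimizer of $B_S$ if $s \le n/2$, its maximizer otherwise. The key fact is that, drawing $S$ uniformly from $\C$, the symmetry property gives $\E[B_S] = \sum_i b_i\,\P(i \in S) = \tfrac{s}{n}\|b\|_1 = \tfrac{sm}{n}$, hence $\min_{S \in \C} B_S \le \tfrac{sm}{n} \le \max_{S \in \C} B_S$. Substituting, for every feasible $b$,
\[
\min_{S \in \C}\ \sum_{S' \in \C \setminus \{S\}} \sum_{i \in S \triangle S'} b_i \ \le\ |\C|\left[\tfrac{sm}{n} + \left(1 - \tfrac{2s}{n}\right)\tfrac{sm}{n}\right] = \frac{2sm|\C|}{n}\left(1 - \tfrac{s}{n}\right),
\]
independently of the sign of $1 - 2s/n$. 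Finally, the uniform allocation $b_i = m/n$ makes $B_S = sm/n$ for \emph{every} $S \in \C$, so for that choice the minimum over $S$ equals exactly the right-hand side above; hence $b_i = m/n$ attains the maximum.

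The one place that needs care is the first step: verifying that the coefficient of each $b_i$ in the inner sum is independent of $i$ and takes only the two values $|\C|(1 - s/n)$ and $|\C| s/n$ according to membership in $S$. This is precisely where Definition~\ref{def:symm_class} is used, through the counting identities $|\{S' \in \C : i \in S'\}| = |\C| s/n$ and $|\{S' \in \C : i \notin S'\}| = |\C|(1 - s/n)$. Everything after that — the affine minimization in $B_S$ and the bound "$\min \le$ average $\le \max$" for $B_S$ under the uniform law on $\C$ — is elementary.
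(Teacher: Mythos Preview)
Your proof is correct and follows essentially the same approach as the paper's: rewrite the inner sum as an affine function of $B_S=\sum_{i\in S}b_i$ using that $|\{S'\in\C:i\in S'\}|=|\C|s/n$ is independent of $i$, then bound the minimum over $S$ by the average of $B_S$ under the uniform law on $\C$. Your treatment is in fact slightly more careful, since you explicitly handle both signs of $1-2s/n$ (the paper tacitly assumes $|\C|-2c\ge 0$, i.e.\ $s\le n/2$).
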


We are now in position to prove the proposition which we can use to get lower bounds for $\mu$ in our special cases.

\begin{proposition} \label{prop:non_adapt_lower}
Let $\C$ be symmetric and suppose $1+ \sqrt{2} \leq (1- 2\varepsilon) \log (|\C| -1)$. If
\[
\mu^{2} \leq (1- 2\varepsilon) \frac{n}{2|S|m} \log (|\C| -1) \ ,
\]
then no non-adaptive procedure can have
\[
\P_S (\hat{S} \neq S) \leq \varepsilon , \ \forall S \in \C \ .
\]
\end{proposition}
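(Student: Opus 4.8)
The plan is to reduce support recovery to a finite multiple--hypothesis test and apply Lemma~\ref{lemma:tsybakov}. Fixing an arbitrary non-adaptive design $\{A_j,\Gamma_j\}_{j\geq1}$, I would take the $M+1$ hypotheses to be \emph{all} elements of $\C$, say $\C=\{S_0,\dots,S_M\}$ with $M=|\C|-1$, and let $\P_j$ denote the law of the observations when the true support is $S_j$. Any estimator $\hat S$ obeying $\P_S(\hat S\neq S)\leq\varepsilon$ for every $S\in\C$ induces a selector $\Psi$ (set $\Psi=j$ iff $\hat S=S_j$, and $\Psi=0$ otherwise), and since $\{\Psi=j\}=\{\hat S=S_j\}$ for $j\geq1$ one gets $\max_{0\leq j\leq M}\P_j(\Psi\neq j)\leq\varepsilon$. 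It therefore suffices to show that the hypotheses of the proposition force the right-hand side of Lemma~\ref{lemma:tsybakov} to exceed $\varepsilon$, which is the desired contradiction.

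The key quantity is $a:=\tfrac1M\sum_{j=1}^M D(\P_j\|\P_0)$. I would use the identity $D(\P_j\|\P_0)=\tfrac{\mu^2}{2}\sum_{i\in S_0\triangle S_j}b_i$ with $b_i=\sum_{j:A_j=i}\Gamma_j$ and $\|b\|_1\leq m$ (from \eqref{eqn:budget}), and then \emph{choose the centre $S_0$ to be worst case}, i.e. a minimiser over $S\in\C$ of $\sum_{S'\in\C\setminus\{S\}}\sum_{i\in S\triangle S'}b_i$. This bounds $a$ by $\tfrac{\mu^2}{2M}$ times the min--max value $\max_{\|b\|_1\leq m}\min_{S\in\C}\sum_{S'\neq S}\sum_{i\in S\triangle S'}b_i$, which by Proposition~\ref{prop:prec_symm_classes} (this is precisely where symmetry of $\C$ enters) equals $\tfrac{m}{n}\min_{S\in\C}\sum_{S'\neq S}|S\triangle S'|$. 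Using the crude bound $|S\triangle S'|\leq 2|S|$ together with $|\C\setminus\{S\}|=M$ gives $a\leq \mu^2|S|m/n$, and substituting the assumed upper bound on $\mu^2$ yields $a\leq\tfrac{1-2\varepsilon}{2}\log(|\C|-1)$.

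Finally I would invoke Lemma~\ref{lemma:tsybakov} with $\tau=1/M$, for which $\tfrac{\tau M}{1+\tau M}=\tfrac12$ and $\log\tau=-\log M$, obtaining $\max_j\P_j(\Psi\neq j)\geq\tfrac12\bigl(1-\tfrac{a+\sqrt{a/2}}{\log M}\bigr)$. The standing assumption $1+\sqrt2\leq(1-2\varepsilon)\log(|\C|-1)$ is exactly what allows the $\sqrt{a/2}$ cross term to be absorbed: combined with $a\leq\tfrac{1-2\varepsilon}{2}\log M$ it gives $a+\sqrt{a/2}\leq(1-2\varepsilon)\log M$, so the bound above is at least $\tfrac12\bigl(1-(1-2\varepsilon)\bigr)=\varepsilon$ --- in fact strictly more, since $(1-2\varepsilon)\log(|\C|-1)>1$. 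This contradicts $\max_j\P_j(\Psi\neq j)\leq\varepsilon$, which proves the claim. The only step carrying real content is the min--max optimisation over designs; were Proposition~\ref{prop:prec_symm_classes} not available, showing that the uniform allocation $b_i=m/n$ is the adversary's optimum would be the crux. Given that proposition, the remaining work --- the estimator-to-test reduction and the arithmetic controlling $a+\sqrt{a/2}$ --- is routine.
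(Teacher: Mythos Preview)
Your proposal is correct and follows essentially the same route as the paper: reduce to the multiple-hypothesis test of Lemma~\ref{lemma:tsybakov} over all of $\C$, control the average KL divergence via Proposition~\ref{prop:prec_symm_classes} (choosing $S_0$ as the minimiser and then maximising over designs $b$), set $\tau=1/M$, and absorb the $\sqrt{a/2}$ term using the standing assumption on $(1-2\varepsilon)\log(|\C|-1)$. The only cosmetic difference is that the paper bounds the average KL by the maximum KL (obtaining the same $a=|S|\tfrac{m}{n}\mu^2$), whereas you keep the average and use $|S\triangle S'|\leq 2|S|$ directly --- both routes coincide.
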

\begin{proof}
Let $\P_0, \dots ,\P_M$ be the probability measures induced by the sampling $\vec{x}$ with parameters $\{ A_j , \Gamma_j \}_{j=1,2,\dots}$, when the support sets are $S_0, \dots ,S_M$ respectively, where $S_i \in \C$. We take $S_0, \dots ,S_M$ to be all the support sets in $\C$, that is $M = |\C|-1$. By Proposition~\ref{prop:prec_symm_classes} we know $b_i = m/n , \ i=1,2,\dots$ is the optimal choice for distributing the precision in the non-adaptive setting for symmetric $\C$. From this we have
\[
\frac{1}{M} \displaystyle{\sum_{j=1}^{M}} D( \P_j \| \P_0 ) \leq \displaystyle{\max_{j=1, \dots ,M}} D( \P_j \| \P_0 ) = |S| \frac{m}{n} \mu^{2} := a \ .
\]
Then by Lemma \ref{lemma:tsybakov} we have
\[
\displaystyle{\sup_{S \in \C}} \ \P_S(\hat{S} \neq S) \geq \displaystyle{\sup_{0< \tau <1}} \left( \frac{\tau M}{1+ \tau M} \left( 1+ \frac{a + \sqrt{a/2}}{\log \tau} \right) \right) \ .
\]
Setting $\tau = 1/M$ we get
\[
\displaystyle{\sup_{S \in \C}} \ \P_S(\hat{S} \neq S) \geq \frac{1}{2} \left( 1- \frac{a + \sqrt{a/2}}{\log M} \right) \ .
\]

The right side of the above expression is bounded below by $\varepsilon$ whenever
\begin{equation} \label{eqn:condition}
a + \sqrt{a/2} \leq (1- 2\varepsilon) \log M
\end{equation}
Plugging the values of $a$ and $M$ into the above inequality immediately yields bounds for $\mu$. However, to make the bound more transparent, assume $\C$ is such that $1+ \sqrt{2} \leq (1- 2\varepsilon) \log (|\C| -1)$. Then every $a$ satisfying
\[
2a \leq (1- 2\varepsilon) \log M
\]
also satisfies \eqref{eqn:condition}. The statement now follows.
\end{proof}

Note that the condition $1+ \sqrt{2} \leq (1- 2\varepsilon) \log (|\C| -1)$ is not necessary to get the bound for $\mu$, its role is merely to make the bound more transparent. Furthermore, it simply requires $\C$ to be large enough compared to $\varepsilon$. Since we are interested in cases where $\C$ is large and $\varepsilon$ is small we can always safely assume this condition holds provided $\varepsilon$ is small enough. The result of Proposition~\ref{prop:non_adapt_lower} is remarkably simple, as the lower bound depends exclusively on the cardinality of the class under consideration. With this in hand it is immediate to get non-adaptive lower bounds for all the classes considered in the paper.

\begin{proposition} \label{prop:non-adapt_lower_2}
A necessary condition to ensure that any non-adaptive procedure satisfies $\P_S (\hat{S} \neq S) \leq \varepsilon , \ \forall S \in \C$ is given by the following expressions, for the different classes $\C$:
\begin{itemize}
\item $s$-sets: $\mu \geq \sqrt{(1- 2\varepsilon) \frac{n}{2sm} \log \left({n \choose s} -1\right)} \ .$

\item $s$-intervals: $\mu \geq \sqrt{(1- 2\varepsilon) \frac{n}{2sm} \log \left(\frac{n}{s} -1\right)} \ .$

\item unions of $k$ disjoint $s$-intervals: $\mu \geq \sqrt{(1- 2\varepsilon) \frac{n}{2ksm} \log \left({n/s \choose k} -1\right)} \ .$

\item unions of $k$ disjoint $s$-stars\footnote{assuming $k(s+1)\leq p$.}: $\mu \geq \sqrt{(1- 2\varepsilon) \frac{n}{2ksm} \log \left({p \choose k(s+1)} -1\right)}\ .$

\item $s$-submatrices: $\mu \geq \sqrt{(1- 2\varepsilon) \frac{n}{2sm} \log \left({n_1 \choose \sqrt{s}} {n_2 \choose \sqrt{s}} -1\right)} \ .$
\end{itemize}
\end{proposition}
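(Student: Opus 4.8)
The plan is to reduce all five cases to Proposition~\ref{prop:non_adapt_lower}, which already delivers, for any symmetric class, a lower bound on $\mu$ depending only on $|S|$ and $|\C|$. Two elementary reductions then do the rest. First, if $\C'\subseteq\C$ then any non-adaptive estimator reliable over $\C$ is a fortiori reliable over $\C'$, so any lower bound obtained for $\C'$ is also a lower bound for $\C$. Second, $\log(|\C'|-1)$ is increasing in $|\C'|$. Consequently it suffices, for each class, to exhibit a \emph{symmetric} subclass all of whose elements share the common cardinality $|S|$ (equal to $s$ or $ks$, depending on the class) and whose cardinality is at least the quantity appearing inside the logarithm in the statement; one then simply invokes Proposition~\ref{prop:non_adapt_lower}.

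For $s$-sets there is nothing to do: the whole class is symmetric by permutation invariance and has $\binom{n}{s}$ elements. For $s$-intervals the full class is \emph{not} symmetric, since coordinates near the ends of $\{1,\dots,n\}$ belong to fewer intervals; I would instead use the $n/s$ pairwise disjoint ``aligned'' intervals $\{1,\dots,s\},\{s+1,\dots,2s\},\dots$. Each coordinate lies in exactly one of them, so a uniform draw puts each coordinate in $S$ with probability $s/n$, i.e.\ this subclass is symmetric, and it has $n/s$ elements. For unions of $k$ disjoint $s$-intervals I take unions of $k$ of those $n/s$ aligned blocks: symmetry is inherited (each block, hence each coordinate, is included with probability $k/(n/s)=ks/n$), the common cardinality is $ks$, and the subclass has $\binom{n/s}{k}$ elements. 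For $s$-submatrices (with $s$ a perfect square) the $\sqrt{s}\times\sqrt{s}$ submatrices form a subclass that is symmetric under independent row and column permutations, since $\P((i,j)\in S)=(\sqrt{s}/n_1)(\sqrt{s}/n_2)=s/n$, and its cardinality is $\binom{n_1}{\sqrt{s}}\binom{n_2}{\sqrt{s}}$.

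The class of unions of $k$ disjoint $s$-stars needs a slightly different treatment. Here symmetry holds for the full class already, because the automorphism group of the complete graph acts transitively on edges, so a uniform draw from the permutation-invariant class $\C$ includes each edge with probability $|S|/n=ks/n$. For the cardinality I would note that, since the $k$ stars are vertex-disjoint (this is exactly why $k(s+1)\le p$ is assumed), each element of $\C$ is supported on a uniquely determined set of $k(s+1)$ vertices, while conversely every $k(s+1)$-subset of vertices supports at least one such configuration; hence $|\C|\ge\binom{p}{k(s+1)}$, which by the monotonicity remark above is all that is needed. Substituting $|S|$ and the respective cardinality (or cardinality lower bound) into Proposition~\ref{prop:non_adapt_lower} then yields the five displayed inequalities.

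The only minor caveat is that Proposition~\ref{prop:non_adapt_lower} states the clean factor-$2$ bound under the side condition $1+\sqrt{2}\le(1-2\varepsilon)\log(|\C|-1)$; for all classes here this holds once $\varepsilon$ is small and $n$ large (note $n/s\to\infty$ under the sparsity assumptions), and otherwise a slightly messier bound is still available directly from \eqref{eqn:condition}. I expect the only genuinely nontrivial steps to be choosing the right symmetric subclass for the interval and submatrix classes, and combining the edge-transitivity argument with the vertex-set counting for the stars; everything else is substitution into Proposition~\ref{prop:non_adapt_lower}.
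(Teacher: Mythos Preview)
Your approach matches the paper's: reduce each case to Proposition~\ref{prop:non_adapt_lower} by exhibiting a symmetric (sub)class of the right cardinality. The treatments of $s$-sets, $s$-intervals, unions of $s$-intervals, and $s$-submatrices are exactly those in the paper.

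There is one slip in the star case. In the paper's definition the $k$ $s$-stars are only \emph{edge}-disjoint ($S_i\cap S_j=\emptyset$ as subsets of edges), not vertex-disjoint; the procedure in Section~\ref{sec:u_star} explicitly handles shared vertices. Hence your claim that ``each element of $\C$ is supported on a uniquely determined set of $k(s+1)$ vertices'' is false for the full class, and the parenthetical that this is ``exactly why $k(s+1)\le p$ is assumed'' misreads the footnote (that condition merely ensures vertex-disjoint configurations exist). Your conclusion $|\C|\ge\binom{p}{k(s+1)}$ is nonetheless correct, since the ``conversely'' direction alone suffices: distinct $k(s+1)$-vertex sets yield distinct vertex-disjoint configurations, all of which lie in $\C$. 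The paper avoids this wrinkle by passing first to the \emph{subclass} of vertex-disjoint unions of $s$-stars, which is itself symmetric (it is invariant under vertex permutations, and the edge action is still transitive), and then bounding its size by $\binom{p}{k(s+1)}$. Either fix---dropping the erroneous forward direction, or working with the vertex-disjoint subclass as the paper does---repairs the argument.
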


\begin{proof}
The case of $s$-sets is straightforward from Proposition~\ref{prop:non_adapt_lower}. The class of $s$-intervals is not symmetric, however, its subclass
$$\left\{\{1,\ldots,s\},\{s+1,\ldots,2s\},\ldots,\{n-s+1,\ldots,n\}\right\}$$
is, therefore we can apply Proposition~\ref{prop:non_adapt_lower} for this subclass. For the class of unions of intervals, we consider a similarly constructed subclass to get the bound above. In case of the unions of stars, we can consider the subclass of stars with distinct vertices. The size of this subclass is lower bounded by ${p \choose k(s+1)}$. For the submatrices, we consider the subclass of submatrices of size $\sqrt{s} \times \sqrt{s}$.
\end{proof}

Using the previous results we can state the following corollary considering the large $n$ behavior of the non-adaptive lower bounds.
\begin{corollary} \label{cor:non-adapt_lower}
In order to have $\displaystyle{\lim_{n \to \infty}} \ \P_S (\hat{S} \neq S) = 0$ for $n\to\infty$ any non-adaptive procedure must satisfy, for some constant $c > 0$
\begin{itemize}
\item $s$-sets: $\mu \geq c \sqrt{\frac{n}{2m} \log \frac{n}{s}} \ ,$ when $s = o(n)$.

\item $s$-intervals: $\mu \geq c \sqrt{\frac{n}{2sm} \log \frac{n}{s}} \ ,$ when $s = o(n)$.

\item unions of $k$ disjoint $s$-intervals: $\mu \geq c \sqrt{\frac{n}{2sm} \log \frac{n}{ks}} \ ,$ when $ks = o(n)$.

\item unions of $k$ disjoint $s$-stars: $\mu \geq c \sqrt{\frac{n}{2m} \log \frac{\sqrt{2n}}{ks}} \ ,$ when $ks = o(\sqrt{n})$.

\item $s$-submatrices: $\mu \geq c \sqrt{\frac{n}{4 \sqrt{s} m} \log \frac{n}{s}} \ ,$ when $n_1 = n_2 = \sqrt{n}$, $s_1 = s_2 = \sqrt{s}$, and $s = o(n)$.
\end{itemize}
\end{corollary}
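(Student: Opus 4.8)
The plan is to derive each asymptotic bound directly from the corresponding non-asymptotic bound in Proposition~\ref{prop:non-adapt_lower_2} by estimating the logarithm of the relevant binomial coefficient and absorbing all multiplicative constants --- including the fixed factor $1-2\varepsilon$, which is a positive constant once $\varepsilon<1/2$ is chosen small --- into the generic constant $c>0$. First I would fix such an $\varepsilon$; the condition $1+\sqrt{2}\leq(1-2\varepsilon)\log(|\C|-1)$ required by Proposition~\ref{prop:non_adapt_lower} is verified along the way, since in every case the stated sparsity hypotheses force $\log|\C|\to\infty$, so the bound of Proposition~\ref{prop:non-adapt_lower_2} is applicable for all $n$ large enough.

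The workhorse estimate is the elementary two-sided bound $(a/b)^b\leq{a\choose b}\leq(ea/b)^b$, valid for $1\leq b\leq a$, which gives $b\log(a/b)\leq\log{a\choose b}\leq b\log(a/b)+b$. Hence whenever $b=o(a)$ one has $\log{a\choose b}=(1+o(1))\,b\log(a/b)$, and in particular $\log{a\choose b}\to\infty$, so that $\log\left({a\choose b}-1\right)=(1+o(1))\log{a\choose b}$ as well. Then I would substitute class by class. For $s$-sets ($|S|=s$, $\C$ itself symmetric) one gets $\log\left({n\choose s}-1\right)=(1+o(1))\,s\log(n/s)$ when $s=o(n)$, and plugging into $\mu^2\geq(1-2\varepsilon)\frac{n}{2sm}\log\left({n\choose s}-1\right)$ yields $\mu\geq c\sqrt{\frac{n}{2m}\log\frac{n}{s}}$. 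For $s$-intervals the symmetric subclass used in Proposition~\ref{prop:non-adapt_lower_2} has cardinality $n/s$, so $\log(n/s-1)=(1+o(1))\log(n/s)$ and $|S|=s$ give $\mu\geq c\sqrt{\frac{n}{2sm}\log\frac{n}{s}}$. For unions of $k$ disjoint $s$-intervals the subclass has size ${n/s\choose k}$ and $|S|=ks$, producing $\frac{n}{2ksm}\cdot k\log\frac{n}{ks}=\frac{n}{2sm}\log\frac{n}{ks}$. For $s$-submatrices with $n_1=n_2=\sqrt{n}$, the subclass of $\sqrt{s}\times\sqrt{s}$ submatrices has size $\left({\sqrt{n}\choose\sqrt{s}}\right)^2$, so its log is $(1+o(1))\cdot 2\sqrt{s}\log\frac{\sqrt{n}}{\sqrt{s}}=(1+o(1))\sqrt{s}\log\frac{n}{s}$, and with $|S|=s$ this gives $\mu\geq c\sqrt{\frac{n}{4\sqrt{s}m}\log\frac{n}{s}}$.

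The only case needing a little extra care is unions of $k$ disjoint $s$-stars. Here one must express the number of vertices $p$ of the complete graph in terms of $n$: since $n={p\choose 2}$ one has $p=(1+o(1))\sqrt{2n}$. The subclass of stars with pairwise distinct vertex sets has cardinality at least ${p\choose k(s+1)}$, and under the assumption $ks=o(\sqrt{n})$ (so that $k(s+1)=o(p)$) its logarithm is $(1+o(1))\,k(s+1)\log\frac{p}{k(s+1)}=(1+o(1))\,ks\log\frac{\sqrt{2n}}{ks}$; dividing by $|S|=ks$ and multiplying by $\frac{n}{2m}$ yields $\mu\geq c\sqrt{\frac{n}{2m}\log\frac{\sqrt{2n}}{ks}}$, as claimed.

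I do not expect a genuine obstacle: the argument is bookkeeping of constants together with routine asymptotics of binomial coefficients. The mildly delicate point is simply to confirm that the $-1$ inside the logarithm and the fixed factor $1-2\varepsilon$ are harmless --- both are absorbed once one checks, via the sparsity hypotheses, that the leading term $b\log(a/b)$ diverges, which simultaneously legitimizes invoking Proposition~\ref{prop:non-adapt_lower_2} for all sufficiently large $n$.
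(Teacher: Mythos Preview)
Your proposal is correct and is exactly the intended derivation: the paper does not give a separate proof of this corollary, presenting it simply as the large-$n$ consequence of Proposition~\ref{prop:non-adapt_lower_2}, and your argument fills in precisely the routine binomial-coefficient asymptotics (via $(a/b)^b\leq{a\choose b}\leq(ea/b)^b$) and constant-absorption that this entails. The only remark is that your handling of the $s$-stars case, including the identification $p=(1+o(1))\sqrt{2n}$ and the verification that $k(s+1)=o(p)$ under $ks=o(\sqrt n)$, is the one place requiring any care, and you treat it correctly.
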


The previous results sheds some light on the limits of support recovery in the non-adaptive setting. If the size of the support set ($s$ or $ks$) is sufficiently small relative to $n$ the $\log n$ factor is unavoidable for non-adaptive support recovery. On the contrary, this factor does not appear in the performance bounds in any of the cases we consider. For the class of unions of intervals, a factor of $\sqrt{1/s}$ appears in the lower bounds above, which means it might be possible to capitalize on the structure in the non-adaptive case as well (and this is indeed the case). For the unions of stars, however, this is no longer true. In fact this class is so large that non-adaptive procedures can no longer take significant advantage from the structure of the support sets. This is in stark contrast with what is possible with adaptive sensing (see Corollary~\ref{cor:star_upper}). Similar remarks apply to the class of submatrices as well.


\subsection{Adaptive Sensing} \label{seq:adaptive_lower}

In this section we derive lower bounds for the signal strength $\mu$ in the adaptive sensing setting. We measure the performance of an estimator by the expected Hamming-distance $\E_S (|\hat{S} \triangle S|)$. In some cases we are also able to prove lower bounds for the error metric $\P_S (\hat{S} \neq S)$. Comparing the bounds of this section with the performance bounds of Section~\ref{sec:performance} shows the near optimality of the proposed procedure for sparse signals for all the classes considered.


\subsubsection{$s$-sets (unstructured case)}

This case is considered in \cite{AS_Rui_2012} and lower bounds are shown for a slightly larger class, which consists of all $s$, $s-1$ and $s+1$ sets. However, it turns out that a similar result holds also if one considers the class of all $s$ and $(s-1)$-sets only. Let $\C'$ denote this class, and suppose there is a sensing procedure and estimator $\hat S$ for which
\[
\max_{S\in\C'} \ \E_S (|\hat{S} \triangle S|) \leq \varepsilon \ .
\]
Lemma 4.1 in \cite{AS_Rui_2012} shows that it suffices to consider only \emph{symmetric} estimators, which satisfy
\[
\forall i,j \in S: \ \P_S (i \notin \hat{S}) = \P_S ( j \notin \hat{S}) \ ,
\]
and
\[
\forall i,j \notin S: \ \P_S ( i \notin \hat{S}) = \P_S ( j \notin \hat{S}) \ ,
\]
for any $S \in \C'$. This follows since any estimator $\hat S$ can be symmetrized without affecting their worst case performance when the class under consideration is closed under permutations. It is easily shown that, for symmetric estimators we have
\[
\forall i,j \in S: \ \E_S \left(\sum_{k:A_k = i} \Gamma_k \right) = \E_S \left(\sum_{k:A_k = j} \Gamma_k \right) \ ,
\]
and
\[
\forall i,j \notin S: \ \E_S \left(\sum_{k:A_k = i} \Gamma_k \right) = \E_S \left(\sum_{k:A_k = j} \Gamma_k \right) \ ,
\]
for any $S \in \C'$. We can then proceed as follows. Let $S\in\C'$ and $i\in\{1,\ldots,n\}$ be arbitrary, and such that $|S|=s-1$ and $i\notin S$. Define also $S'=S \cup \{i\}$. For the event $\{ i \notin \hat{S} \}$ we have
\[
D(\P_S \| \P_{S'}) \geq -\log\left(2\P_S(i \notin \hat{S})+ 2\P_{S'}(i = \hat{S})\right)\ .
\]
Using the symmetry of the estimator we can easily bound the left hand side as follows.
\[
D(\P_S \| \P_{S'}) = \frac{\mu^2}{2} \E_S \left(\sum_{j:A_j = i} \Gamma_j \right) \leq \frac{\mu^2}{2} \frac{m}{n-s+1} \ .
\]
Furthermore, also by symmetry
\[
\P_S (i\in\hat S) \leq \varepsilon /(n-s+1), \ \P_{S'} (i\notin \hat S) \leq \varepsilon /s \ ,
\]
whenever we have $\E_S (|\hat{S} \neq S|) \leq \varepsilon$. Putting everything together yields the following proposition.
\begin{proposition} \label{prop:vanilla_lower}
Let $\C'$ denote the class of all subsets of $\{1,\dots ,n \}$ with cardinality either $s-1$ or $s$. Suppose that $\displaystyle{\max_{S \in \C'}} \ \E_S (|\hat{S} \triangle S|) \leq \varepsilon$. Necessarily
\[
\mu \geq \sqrt{\frac{2(n-s)}{m} \left( \log s + \log\frac{n-s}{n+1} + \log\frac{1}{2\varepsilon} \right)}\ .
\]
\end{proposition}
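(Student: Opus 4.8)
The plan is to assemble three ingredients, two of which reduce the problem and one of which is the quantitative core: a reduction to symmetric estimators, the change-of-measure inequality already stated before the proposition, and a two-point argument comparing an $(s-1)$-set with the $s$-set obtained by adding a single coordinate.

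First I would invoke Lemma~4.1 of \cite{AS_Rui_2012}: since $\C'$ is closed under permutations of $\{1,\dots,n\}$, any sensing strategy/estimator pair can be symmetrized without increasing $\max_{S\in\C'}\E_S(|\hat S\triangle S|)$, so it suffices to establish the bound for symmetric $\hat S$. For such $\hat S$ the joint law of the data and the estimate is permutation-equivariant, and in particular $\E_S\big(\sum_{j:A_j=\ell}\Gamma_j\big)$ and $\P_S(\ell\in\hat S)$ depend on $\ell\notin S$ only through the fact that $\ell\notin S$ (and symmetrically for $\ell\in S$); I would record these two facts, which follow from the same equivariance. Now fix $S\in\C'$ with $|S|=s-1$, fix any $i\notin S$, and set $S'=S\cup\{i\}\in\C'$. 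Since $S$ and $S'$ differ only at coordinate $i$ and the strategy is a fixed function of the observed data, the chain rule for Kullback--Leibler divergence over the sequential measurements gives $D(\P_S\|\P_{S'})=\tfrac{\mu^2}{2}\,\E_S\big(\sum_{j:A_j=i}\Gamma_j\big)$, each measurement of coordinate $i$ contributing $\Gamma_j\mu^2/2$ and every other measurement contributing nothing.

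Next I would bound both sides of the inequality $D(\P_S\|\P_{S'})\ge-\log\big(2\P_S(i\in\hat S)+2\P_{S'}(i\notin\hat S)\big)$, a Bretagnolle--Huber-type change-of-measure bound applied to the event $\{i\in\hat S\}$. For the left side: by symmetry each of the $n-s+1$ coordinates outside $S$ receives the same expected precision, so summing against the budget $\E_S(\sum_j\Gamma_j)\le m$ gives $\E_S(\sum_{j:A_j=i}\Gamma_j)\le m/(n-s+1)$, hence $D(\P_S\|\P_{S'})\le \tfrac{\mu^2 m}{2(n-s+1)}$. For the right side: the Hamming constraint at $S$ bounds the expected number of false positives, which equals $(n-s+1)\,\P_S(i\in\hat S)$ by symmetry, so $\P_S(i\in\hat S)\le \varepsilon/(n-s+1)$; likewise the Hamming constraint at $S'$ bounds the expected number of false negatives, $=s\,\P_{S'}(i\notin\hat S)$, giving $\P_{S'}(i\notin\hat S)\le \varepsilon/s$. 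Substituting and simplifying $\tfrac{2\varepsilon}{n-s+1}+\tfrac{2\varepsilon}{s}=\tfrac{2\varepsilon(n+1)}{s(n-s+1)}$ yields $\tfrac{\mu^2 m}{2(n-s+1)}\ge \log\tfrac{s(n-s+1)}{2\varepsilon(n+1)}$; rearranging and using $n-s+1\ge n-s$ produces the stated lower bound on $\mu$ (and when the right side is nonpositive the claim is vacuous since $\mu>0$).

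I expect the only genuine work to lie in the two symmetry reductions. The first — that symmetrization does not hurt the worst-case Hamming risk — I would take verbatim from \cite[Lemma~4.1]{AS_Rui_2012}. The second — that a symmetric estimator forces a symmetric allocation of expected precision and symmetric per-coordinate error probabilities across the coordinates outside $S$ — needs a short but careful argument that the joint law of $\big(\{A_j,\Gamma_j,Y_j\}_j,\hat S\big)$ under $\P_S$ is invariant under permutations of $\{1,\dots,n\}$ fixing $S$ pointwise. Beyond that, the only points deserving care are routine: the validity of the KL chain-rule identity when the number of measurements is a stopping time (a Wald-type argument as in \cite{SLRT_Wald_1945,AS_Rui_2012}), and the observation that the budget inequality may be applied after discarding the nonnegative precision spent on coordinates inside $S$.
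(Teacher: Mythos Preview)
Your proposal is correct and follows essentially the same route as the paper: reduction to symmetric estimators via Lemma~4.1 of \cite{AS_Rui_2012}, the two-point comparison between an $(s-1)$-set $S$ and $S'=S\cup\{i\}$, the identity $D(\P_S\|\P_{S'})=\tfrac{\mu^2}{2}\E_S\big(\sum_{j:A_j=i}\Gamma_j\big)$ bounded via symmetry and the budget, and the Bretagnolle--Huber-type inequality combined with the symmetric per-coordinate error bounds $\P_S(i\in\hat S)\le\varepsilon/(n-s+1)$ and $\P_{S'}(i\notin\hat S)\le\varepsilon/s$. Your write-up is in fact more careful than the paper's sketch (which contains a couple of sign/typo slips in the change-of-measure inequality that you have implicitly corrected), and your remarks about the Wald-type justification for the KL chain rule under a stopping time and the vacuous case when the bracket is nonpositive are well placed.
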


In the large $n$ regime and when considering sparse signals we have the following result.
\begin{corollary}[$s$-sets] \label{cor:vanilla_lower}
Consider the setting of Proposition~\ref{prop:vanilla_lower}, and suppose $s=o(n)$ as $n \to \infty$. If there is an adaptive sensing and estimation strategy such that $\displaystyle{\lim_{n \to \infty} \max_{S\in\C'}} \ \E_S (|\hat{S} \triangle S|) = 0$ then necessarily
\[
\mu \geq \sqrt{\frac{2n}{m} (\log s+\omega_n)}\ ,
\]
where $\omega_n\to\infty$.
\end{corollary}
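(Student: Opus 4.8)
The plan is to feed the hypothesis straight into Proposition~\ref{prop:vanilla_lower}. Let $\varepsilon_n := \max_{S\in\C'}\E_S(|\hat S\triangle S|)$ be the worst-case Hamming error actually attained by the given procedure at dimension $n$; by assumption $\varepsilon_n\to 0$, and since finite-precision Gaussian observations never permit exact recovery, $\varepsilon_n>0$. Applying Proposition~\ref{prop:vanilla_lower} with $\varepsilon:=\varepsilon_n$ (the hypothesis $\max_{S\in\C'}\E_S(|\hat S\triangle S|)\le\varepsilon_n$ then holds trivially) gives
\[
\mu^2 \;\ge\; \frac{2(n-s)}{m}\left(\log s + \log\frac{n-s}{n+1} + \log\frac{1}{2\varepsilon_n}\right)\ .
\]
Everything else is a matter of extracting the leading term $\tfrac{2n}{m}\log s$ and checking that what remains diverges.

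Next I would rearrange. Writing $n-s=n\bigl(1-\tfrac sn\bigr)$ and expanding the product,
\[
\mu^2 \;\ge\; \frac{2n}{m}\left(\log s + \omega_n\right),\qquad
\omega_n := \frac{n-s}{n}\left(\log\frac{n-s}{n+1}+\log\frac{1}{2\varepsilon_n}\right) - \frac{s}{n}\log s\ .
\]
Since $s=o(n)$ we have $\tfrac{n-s}{n}\to 1$ and $\log\tfrac{n-s}{n+1}\to 0$; since $\varepsilon_n\to 0$ we have $\log\tfrac{1}{2\varepsilon_n}\to\infty$; and the correction $\tfrac sn\log s$ is $o(1)$ in the sparse regime of Table~\ref{table} (where $s=o(\sqrt n)$). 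Hence $\omega_n\to\infty$, and this $\omega_n$ — or, for a tidier closed form, $\omega_n:=\tfrac{m\mu^2}{2n}-\log s$, which is at least as large and therefore also diverges — yields exactly the claimed inequality $\mu\ge\sqrt{\tfrac{2n}{m}(\log s+\omega_n)}$.

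I do not foresee a genuine obstacle: the corollary is essentially the large-$n$ simplification of Proposition~\ref{prop:vanilla_lower}. The two points needing a little care are (i) the legitimacy of instantiating $\varepsilon$ at the procedure's own error level $\varepsilon_n$ — an alternative that sidesteps this is to fix an arbitrary constant $\varepsilon\in(0,1/2)$, observe that $\max_{S\in\C'}\E_S(|\hat S\triangle S|)\le\varepsilon$ holds for all large $n$, deduce $\liminf_n\bigl(\tfrac{m\mu^2}{2n}-\log s\bigr)\ge\log\tfrac{1}{2\varepsilon}-o(1)$, and then let $\varepsilon\downarrow 0$, which is valid because $\varepsilon$ enters only additively; and (ii) absorbing the $\tfrac sn\log s$ term, which is the one place the sparsity hypothesis is actually invoked. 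Both are routine.
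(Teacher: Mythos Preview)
Your proposal is correct and is precisely what the paper intends: the corollary is stated there without proof, as the direct asymptotic reading of Proposition~\ref{prop:vanilla_lower}, and you have simply spelled that out by plugging in $\varepsilon=\varepsilon_n$ and simplifying. Your flagging of the $\tfrac{s}{n}\log s$ correction---and the fact that absorbing it cleanly needs a touch more than $s=o(n)$ (e.g.\ $s\log s=o(n)$, certainly implied by the $s=o(\sqrt n)$ regime of Table~\ref{table})---is a fair observation that applies equally to the paper's own statement; it is not a defect of your argument.
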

This shows that, in the asymptotic regime, our adaptive-sensing procedure is near optimal in the unstructured case, when the signal is sparse.


\subsubsection{$s$-intervals}

In the case of $s$-intervals we can get a lower bound for the probability of error as the metric of interest. This lower bound is, however, a bit loose on the dependence on $\varepsilon$.
\begin{proposition} \label{prop:int_lower_v2}
Let $\C$ be the class of $s$-intervals. Assume $\displaystyle{\max_{S \in \C}} \ \P_S (\hat{S} \neq S) \leq \varepsilon$, where $\varepsilon>0$. Then necessarily
\[
\mu \geq (1- \varepsilon) \sqrt{\frac{n}{2sm}} \ .
\]
\end{proposition}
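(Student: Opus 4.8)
The plan is to reduce to a multiple–hypothesis testing problem over a subclass of pairwise disjoint intervals, and then to run a change–of–measure argument that is driven by the \emph{global} precision budget rather than by any two fixed hypotheses.

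First, since $n/s$ is an integer, consider the subclass $\C_0 = \{I_1,\dots,I_N\}$ with $N = n/s$ and $I_j = \{(j-1)s+1,\dots,js\}$; these are pairwise disjoint and partition $\{1,\dots,n\}$. Any estimator with $\max_{S\in\C}\P_S(\hat S\neq S)\le\varepsilon$ satisfies in particular $\P_{I_j}(\hat S = I_j)\ge 1-\varepsilon$ for every $j$. Let $\P_{\vec 0}$ denote the law of the data when the signal is identically zero; we may assume the budget \eqref{eqn:budget} holds under $\P_{\vec 0}$ as well (this is the natural reading of a budget--feasible strategy, and adjoining $\emptyset$ to $\C$ as discussed earlier is harmless). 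For each $j$ set $c_j = \E_{\vec 0}\big(\sum_{k:\,A_k\in I_j}\Gamma_k\big)$, the expected precision spent on the coordinates of $I_j$ under $\P_{\vec 0}$; since the $I_j$ partition $\{1,\dots,n\}$, $\sum_{j=1}^N c_j \le m$.

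The crucial point — and what I expect to be the main obstacle — is that one cannot extract a budget--dependent bound from comparing two in--class hypotheses: for $a\neq b$ one has $D(\P_{I_a}\|\P_{I_b})=\tfrac{\mu^2}{2}\,\E_{I_a}\big(\sum_{k:\,A_k\in I_a\cup I_b}\Gamma_k\big)$, and this always contains the term $\tfrac{\mu^2}{2}\E_{I_a}(\text{precision on }I_a)$, which an effective adaptive procedure can make as large as $m$. The budget only bites when \emph{no} interval carries the signal, which forces us to pass through $\P_{\vec 0}$. By pigeonholing on $\sum_j c_j\le m$, at least $N/2$ indices $j$ have $c_j\le 2m/N = 2ms/n$; for each such $j$, since $\P_{\vec 0}$ and $\P_{I_j}$ differ only on the coordinates of $I_j$, the sequential chain rule gives $D(\P_{\vec 0}\|\P_{I_j})=\tfrac{\mu^2}{2}c_j\le \tfrac{\mu^2 ms}{n}$.

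To conclude, fix such a $j$ and apply a change--of--measure bound (Pinsker, say) to the event $\{\hat S = I_j\}$: from $\P_{I_j}(\hat S=I_j)\ge 1-\varepsilon$,
\[
\P_{\vec 0}(\hat S = I_j)\ \ge\ (1-\varepsilon)-\|\P_{I_j}-\P_{\vec 0}\|_{\mathrm{TV}}\ \ge\ (1-\varepsilon)-\sqrt{\tfrac{\mu^2 ms}{2n}}\ .
\]
Summing over the (at least $N/2$) good indices and using that the events $\{\hat S=I_j\}$ are disjoint, hence their $\P_{\vec 0}$--probabilities sum to at most $1$, yields $\tfrac N2\big((1-\varepsilon)-\sqrt{\mu^2 ms/2n}\big)\le 1$, i.e. $\sqrt{\mu^2 ms/(2n)}\ge (1-\varepsilon)-2/N$. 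When $n/s$ is not too small the correction $2/N=2s/n$ is negligible and rearranging gives $\mu \ge (1-\varepsilon)\sqrt{n/(2sm)}$ (in fact with a somewhat larger constant); the general case goes the same way with a marginally worse constant. As the statement acknowledges, the dependence on $\varepsilon$ here is not sharp — a better constant, and a $\log(1/\varepsilon)$ improvement, would follow by using the Bretagnolle–Huber inequality in place of Pinsker's — but the displayed form is clean and already matches Corollary~\ref{cor:int_upper} up to constants.
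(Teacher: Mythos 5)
Your argument is correct in substance, but it follows a genuinely different route from the paper's proof. The paper reduces to a \emph{binary} test between two composite hypotheses: it splits the class $\D$ of disjoint intervals into two halves $\D_1,\D_2$, compares the two uniform mixtures via the triangle inequality through $\P_0$ and Pinsker, and bounds $D(\P_0\|\P_1)$ for the mixture using Jensen's inequality, so that the global budget enters through $\E_0$ of the precision summed over \emph{all} intervals, divided by $|\D_1|$. You instead run a multiple-hypothesis ("birthday''-style) argument: pigeonhole the null-measure precision $c_j$ over the $N=n/s$ disjoint intervals, apply Pinsker separately to each of the at least $N/2$ cheap intervals, and use disjointness of the events $\{\hat S=I_j\}$ to sum their $\P_{\vec 0}$-probabilities. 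Both proofs share the two essential ingredients — passing through the zero-signal measure (so the budget \eqref{eqn:budget} actually bites; note the paper's own proof also implicitly invokes the budget under $\E_0$, so your explicit assumption there is no extra liability) and the chain-rule identity $D(\P_{\vec 0}\|\P_{I_j})=\tfrac{\mu^2}{2}\E_{\vec 0}\bigl(\sum_{k:A_k\in I_j}\Gamma_k\bigr)$ — but the mechanisms for converting the global budget into a per-interval budget differ (mixture plus Jensen versus pigeonhole plus disjoint events). What each buys: in the sparse regime your route actually yields a constant better by a factor of $2$, namely $\mu\geq((1-\varepsilon)-2s/n)\sqrt{2n/(sm)}$; the paper's mixture bound is slightly weaker but uniform in $n/s$. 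One caveat on your closing sentence: when $n/s<4/(1-\varepsilon)$ your inequality does not recover the stated bound at all (the $2/N$ correction can swamp $1-\varepsilon$, making the bound vacuous rather than holding "with a marginally worse constant''), whereas the paper's two-mixture argument only needs $n/s\geq 2$. Since the proposition is of interest in the sparse regime $s=o(n)$, this is a boundary-case blemish rather than a substantive gap, but the claim about the general case should be qualified or handled by falling back on a two-point (or two-mixture) comparison when $N$ is small.
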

\begin{proof}
Assume without loss of generality that $n/s$ is an even integer. Define the class of disjoint intervals
\begin{equation}\label{eqn:disjoint_int}
\D=\left\{\{1,\ldots,s\},\{s+1,\ldots,2s\},\ldots,\{n-s+1,\ldots,n\}\right\}\ .
\end{equation}
Since $\D\subseteq\C$ it suffices to show the lower bound for this smaller class. Now partition the class in two disjoint sets of the same size, namely $\D_1$ and $\D_2$, such that $\D_1\cup\D_2=\D$ and $\D_1\cap\D_2=\emptyset$. To show the lower bound we consider a test of two simple hypothesis. Namely, under $H_1$ assume $S\sim\mathrm{Unif}(\D_1)$ and under $H_2$ assume $S\sim\mathrm{Unif}(\D_2)$. In words, data under each hypothesis is generated by first selecting the support set $S$ from either distribution, and then collecting data $D=\{Y_j,A_j,\Gamma_j\}_{1,2,\ldots}$ under the model indexed by $S$. Therefore this is a test between two simple hypothesis.

Given a support estimator $\hat S$ one can construct a test
$$\Phi(D)=\left\{\begin{array}{ll}
1 & \text{ if } \hat S\in\D_1\\
2 & \text{ otherwise}
\end{array}\right. \ .$$ 
Clearly, if $\P_S (\hat{S} \neq S) \leq \varepsilon \ \forall S \in \D$, then $\P_1 (\Phi(D)=2) + \P_2 (\Phi(D)=1)\leq\varepsilon$, where $\P_i$ denotes the distribution of $D=\{ Y_j ,A_j ,\Gamma_j \}_{1,2,\dots}$ under $H_i$. Let $\P_0$ denote the distribution of $D$ when $S= \emptyset$ and $TV(\cdot,\cdot)$ denote the total-variation distance. We have
\begin{align}
\P_1 (\Phi(D)=2) + \P_2 (\Phi(D)=1) & \geq 1- TV(\P_1 ,\P_2 )\nonumber\\
&\geq 1- \big( TV(\P_0 ,\P_1 ) + TV(\P_0, \P_2 ) \big) \nonumber\\
& = 1- 2 TV(\P_0 ,\P_1 ) \nonumber\\
&\geq  1- \sqrt{2D(\P_0 \| \P_1)} \label{eqn:double_kl}\ ,
\end{align}
where the second inequality follows from the triangle inequality, and the third inequality follows from the first Pinsker inequality \cite{Tsybakov_2009}.

The Kullback-Leibler between $\P_0$ and $\P_1$ can be bounded as follows.
\begin{align*}
D(\P_0 \| \P_1) & =  \sum_j \E_0 \left[ \log \frac{\d\P_0(Y_j|A_j,\Gamma_j)}{\d\P_1(Y_j|A_j,\Gamma_j)} \right]\\
& = -\sum_j \E_0 \left[ \log \frac{\d\P_1(Y_j|A_j,\Gamma_j)}{\d\P_0(Y_j|A_j,\Gamma_j)} \right]\\
& = -\sum_j \E_0 \left[ \log \frac{\frac{1}{|\D_1|}\sum_{S\in\D_1}\d\P_S(Y_j|A_j,\Gamma_j,S)}{\d\P_0(Y_j|A_j,\Gamma_j)} \right]\\
& = -\sum_j \E_0 \left[ \log \frac{1}{|\D_1|}\sum_{S\in\D_1}  \exp \left( -\frac{\Gamma_j}{2} \mu \textbf{1}\{ A_j = S \} (\mu - 2 Y_j) \right) \right]\\
& \leq -\sum_j \E_0 \left[ \frac{1}{|\D_1|}\sum_{S\in\D_1}  -\frac{\Gamma_j}{2} \mu \textbf{1}\{ A_j = S \} (\mu - 2 Y_j) \right]\ ,
\end{align*}
where the last step follows from Jensen's inequality. Therefore
\begin{align*}
D(\P_0 \| \P_1) & \leq  \frac{1}{|\D_1|}\sum_{S\in\D_1} \E_0 \left[ \sum_j \frac{\Gamma_j}{2} \mu \textbf{1}\{ A_j = S \} (\mu - 2 Y_j) \right]\\
& =  \frac{1}{|\D_1|}\sum_{S\in\D_1} \E_0 \left[ \sum_j \frac{\Gamma_j}{2} \mu \textbf{1}\{ A_j = S \} (\mu - 2 Y_j) \right]\\
& =  \frac{1}{|\D_1|}\sum_{S\in\D_1} \E_0 \left[ \sum_{j:A_j =S} \frac{\Gamma_j}{2} \mu^2 \right]\\
& \leq  \frac{\mu^2}{2} \frac{1}{|\D_1|}\sum_{S\in\D_1\cup \D_2} \E_0 \left[ \sum_{j:A_j =S} \Gamma_j \right] \leq  \frac{\mu^2}{2} \frac{2s}{n} m \ .
\end{align*}

From this and \eqref{eqn:double_kl} we immediately get the result of the proposition.
\end{proof}

A closer look at the above proof gives an interesting insight. It appears the problem of interval estimation is as hard as the problem of detection.
Note that in essence the previous proof claims that estimating an interval is as hard as the problem of detection, that is, deciding between the hypotheses $H_0: \ S = \emptyset$ and $H_1: \ S \in \C$. In fact, the method proposed in Section~\ref{sec:int} already deals with this case, and exhibits the same performance if one ``adds'' the empty set to the class of $s$-intervals. The following proposition gives lower bounds both when considering $\P_S (\hat{S} \neq S)$ and $\E_S (|\hat{S} \triangle S|)$ as the error metric, that also captures the dependence on $\varepsilon$.

\begin{proposition} \label{prop:int_lower}
Let $\C$ be the class of $s$-intervals. Let $\varepsilon \in (0,1)$.
\begin{enumerate}
\item[(i)] if $\displaystyle{\max_{S \in \C\cup\emptyset}} \ \P_S (\hat{S} \neq S) \leq \varepsilon$ necessarily
\[
\mu \geq \sqrt{\frac{2n}{sm} \log \frac{1}{2 \varepsilon}} \ .
\]
\item[(ii)] if $\displaystyle{\max_{S \in \C\cup\emptyset}} \ \E_S (|\hat{S} \triangle S|) \leq \varepsilon$ necessarily
\[
\mu \geq \sqrt{\frac{2(n-s)}{sm} \left( \log\frac{n-s}{n+s} + \log\frac{s}{8\varepsilon} \right)}\ .
\]
\end{enumerate}
\end{proposition}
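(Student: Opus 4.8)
The plan is to prove both parts by reductions to hypothesis testing: in each case the stated accuracy requirement is turned into a lower bound on a Kullback--Leibler divergence, which is then controlled from above using the precision budget \eqref{eqn:budget} together with the interval structure. The two parts use different reductions because of the different error metrics, and the part about $\P_S(\hat S\ne S)$ needs a sharper divergence-versus-error inequality than the Pinsker bound of Proposition~\ref{prop:int_lower_v2}.

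\textbf{Part (i): reduction to detection.} Write $N=n/s$ and let $I_1,\dots,I_N$ be the $s$-intervals partitioning $\{1,\dots,n\}$; let $\P_\emptyset$ be the law of the data when $S=\emptyset$ and $\P_1=\tfrac1N\sum_{k=1}^N\P_{I_k}$ the uniform mixture over these disjoint intervals. If $\max_{S\in\C\cup\emptyset}\P_S(\hat S\ne S)\le\varepsilon$, then the test declaring ``$S=\emptyset$'' when $\hat S=\emptyset$ has type~I error $\P_\emptyset(\hat S\ne\emptyset)\le\varepsilon$ and type~II error $\P_1(\hat S=\emptyset)\le\tfrac1N\sum_k\P_{I_k}(\hat S\ne I_k)\le\varepsilon$, so the sum of its two errors is at most $2\varepsilon$; in particular $1-TV(\P_\emptyset,\P_1)\le 2\varepsilon$. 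Using an \emph{exponential} relation between this quantity and the divergence --- e.g. $1-TV(\P_\emptyset,\P_1)\ge\tfrac12 e^{-D(\P_\emptyset\|\P_1)}$, which follows from $D_{1/2}\le D$, and crucially \emph{not} Pinsker's inequality, which would only reproduce the polynomial-in-$\varepsilon$ bound of Proposition~\ref{prop:int_lower_v2} --- one obtains $D(\P_\emptyset\|\P_1)\gtrsim\log\tfrac1\varepsilon$. For the reverse estimate, convexity of $D(\P_\emptyset\|\cdot)$ together with the fact that the $I_k$ partition $\{1,\dots,n\}$ gives
\[
D(\P_\emptyset\|\P_1)\ \le\ \frac1N\sum_{k=1}^N D(\P_\emptyset\|\P_{I_k})\ =\ \frac{\mu^2}{2N}\,\E_\emptyset\Big(\sum_j\Gamma_j\Big)\ \le\ \frac{\mu^2 s m}{2n}\ ,
\]
where I used $D(\P_\emptyset\|\P_{I_k})=\tfrac{\mu^2}{2}\E_\emptyset(\sum_{j:A_j\in I_k}\Gamma_j)$ (the sequential chain-rule identity already used in the proof of Proposition~\ref{prop:int_lower_v2}) and \eqref{eqn:budget}. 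Combining the two displays and solving for $\mu$ gives the bound.

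\textbf{Part (ii): a Hamming two-point argument.} Here I mimic the proof of Proposition~\ref{prop:vanilla_lower}, but play the empty set against a single interval. For an $s$-interval $I$ one has $D(\P_\emptyset\|\P_I)=\tfrac{\mu^2}{2}\sum_{i\in I}b_i$ with $b_i:=\E_\emptyset(\sum_{j:A_j=i}\Gamma_j)$ and $\sum_i b_i\le m$; since every coordinate lies in at most $s$ of the $n-s+1$ intervals, $\min_I\sum_{i\in I}b_i\le\tfrac{sm}{n-s+1}$, so there is an interval $I^\star$ with $D(\P_\emptyset\|\P_{I^\star})\le\tfrac{\mu^2 s m}{2(n-s+1)}$. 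On the other hand, from $\sum_{i\in I^\star}\P_\emptyset(i\in\hat S)\le\E_\emptyset(|\hat S|)\le\varepsilon$ and $\sum_{i\in I^\star}\P_{I^\star}(i\notin\hat S)\le\E_{I^\star}(|\hat S\triangle I^\star|)\le\varepsilon$ there is a coordinate $i^\star\in I^\star$ with $\P_\emptyset(i^\star\in\hat S)+\P_{I^\star}(i^\star\notin\hat S)\le 2\varepsilon/s$. Applying the same divergence-versus-error inequality used in Proposition~\ref{prop:vanilla_lower}, namely $D(\P_\emptyset\|\P_{I^\star})\ge-\log\!\big(2\P_\emptyset(i^\star\in\hat S)+2\P_{I^\star}(i^\star\notin\hat S)\big)$, yields $D(\P_\emptyset\|\P_{I^\star})\ge\log\tfrac{s}{4\varepsilon}$. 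Chaining this with the upper bound and solving for $\mu$ gives (a slightly stronger form of) the stated inequality, the remaining slack being absorbed into the factors $\log\frac{n-s}{n+s}$ and $\log\frac{s}{8\varepsilon}$.

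\textbf{Main obstacle.} In both parts the conceptual key is that \eqref{eqn:budget} constrains only the \emph{total} expected precision under the null law $\P_\emptyset$, so the $\Theta(n/s)$ disjoint intervals must be used to spread that budget --- the convexity step in (i) and the ``each coordinate lies in at most $s$ intervals'' count in (ii) --- and this is precisely where the $1/s$ improvement over the unstructured bound originates. The one genuinely new ingredient, relative to the Pinsker-based Proposition~\ref{prop:int_lower_v2}, is the exponential KL-to-error inequality in part (i); making it land on the precise constant $\log\frac{1}{2\varepsilon}$ rather than something of the form $\log\frac{1}{c\varepsilon}$ requires a careful choice of that inequality (and is cleanest for small $\varepsilon$, the regime relevant for the corollaries).
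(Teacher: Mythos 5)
Your part (ii) is correct and takes a genuinely different route from the paper. The paper proves (ii) by rounding an arbitrary estimator to a union of the disjoint intervals in $\D$ (losing a factor $4$ in the Hamming error), viewing the result as an unstructured problem on a vector of length $n/s$ with support size at most one, and then invoking the symmetrization-based Proposition~\ref{prop:vanilla_lower}; the constants $\log\frac{n-s}{n+s}$ and $\frac{s}{8\varepsilon}$ in the statement are exactly the artifacts of that reduction. Your argument replaces both the rounding and the symmetrization by two averaging steps: pigeonholing the null-budget $b_i=\E_\emptyset(\sum_{j:A_j=i}\Gamma_j)$ over the $n-s+1$ intervals to find $I^\star$ with $D(\P_\emptyset\|\P_{I^\star})\le\frac{\mu^2 sm}{2(n-s+1)}$, and averaging the per-coordinate errors over $i\in I^\star$ to find $i^\star$ with $\P_\emptyset(i^\star\in\hat S)+\P_{I^\star}(i^\star\notin\hat S)\le 2\varepsilon/s$, then applying the same $D\ge-\log(2\P_\emptyset(A)+2\P_{I^\star}(A^c))$ inequality the paper uses before Proposition~\ref{prop:vanilla_lower}. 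All steps are legitimate (the sequential chain rule for the KL and the use of the budget under $\P_\emptyset$ are valid because $\emptyset\in\C\cup\emptyset$), and your bound $\mu\ge\sqrt{\frac{2(n-s+1)}{sm}\log\frac{s}{4\varepsilon}}$ indeed implies the stated one. This is cleaner than the paper's proof and buys a slightly better constant.

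Part (i) is where there is a genuine gap, and it is precisely the point you flag but do not resolve. The paper does not prove (i) by a total-variation computation at all: it observes that an estimator with $\max_{S\in\C\cup\emptyset}\P_S(\hat S\neq S)\le\varepsilon$ yields a detection test for $H_0:S=\emptyset$ versus $H_1:S\in\C$ and then cites Theorem 3.1 of \cite{AS_Rui_2012}, which already carries the constant $\log\frac{1}{2\varepsilon}$. Your chain (mixture $\P_1$, convexity of $D(\P_\emptyset\|\cdot)$, budget, and the Bretagnolle--Huber-type bound $1-TV(\P_\emptyset,\P_1)\ge\frac12 e^{-D(\P_\emptyset\|\P_1)}$ applied to a test with error sum at most $2\varepsilon$) is sound, but it delivers $D(\P_\emptyset\|\P_1)\ge\log\frac{1}{4\varepsilon}$ and hence $\mu\ge\sqrt{\frac{2n}{sm}\log\frac{1}{4\varepsilon}}$, which is strictly weaker than the stated $\log\frac{1}{2\varepsilon}$ for every $\varepsilon$. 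The promised ``careful choice of the inequality'' is not supplied, and the obvious candidates do not close it: the data-processing bound $D\ge d\bigl(\P_\emptyset(A)\,\|\,\P_1(A)\bigr)$ with $\P_\emptyset(A)\ge1-\varepsilon$, $\P_1(A)\le\varepsilon$ also fails to dominate $\log\frac{1}{2\varepsilon}$ for moderate $\varepsilon$ (e.g.\ $\varepsilon=1/4$). So as written your argument proves the proposition only with $2\varepsilon$ replaced by $4\varepsilon$ inside the logarithm; to get the stated constant for all $\varepsilon\in(0,1)$ you would need the sharper detection lower bound of \cite{AS_Rui_2012} (or a reproduction of its proof), as the paper does. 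The discrepancy is immaterial for Corollary~\ref{cor:int_lower}, but it means the proposition as stated is not fully established by your proof.
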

\begin{proof}
The assertion considering $\P_S (\hat{S} \neq S)$ as the error metric immediately follows from Theorem 3.1 in \cite{AS_Rui_2012}. This theorem is directly applicable as having an estimator $\hat{S}$ satisfying $(i)$ implies having a test $\Phi$ for the hypothesis testing problem
\[
H_0: \ S = \emptyset \qquad \textrm{versus} \qquad H_1: \ S \in \C
\]
with sum of type I and type II error probabilities no greater than $\varepsilon$.

As for the case when $\hat{S}$ satisfies $(ii)$, consider the following reduction of the problem. Let $\tilde{\C}$ denote the class that contains the empty set and the class of disjoint consecutive intervals defined in \eqref{eqn:disjoint_int}. For sake of simplicity assume that $n/s$ is an integer. It suffices to consider estimators of the form
\begin{equation}\label{eqn:estimator_union}
\hat{S} = \bigcup_{d\in D} S_d \ ,
\end{equation}
where $D$ is a subset of $\D$. In other words, the estimator is a (possibly empty) union of some disjoint $s$-intervals in $\D$. It is not restrictive to consider such estimators since if one has an arbitrary estimator $\hat{S}$ with expected number of errors at most $\varepsilon$, then we can define the estimator $\tilde{S}$ of the form in \eqref{eqn:estimator_union} which has error at most $4 \varepsilon$. For instance let $\tilde{S}$ be such that $S_d \subseteq \tilde{S}$ if and only if $|\hat{S} \cap S_d | \geq s/2$ for all $d\in\D$. Then $\E_S (|\tilde{S} \triangle S|) \leq 4 \varepsilon$ for all $S \in \C\cup \emptyset$.

Considering such estimators we can write the expected number of errors as
\[
\E_S (|\hat{S} \triangle S|) = \displaystyle{\sum_{i=1}^{n/s}} s \ \P_S (\textbf{1} \{ S_i \subseteq \hat{S} \} \neq \textbf{1} \{ S_i \subseteq S \} ) \ .
\]
This means that the above problem is similar to that of Proposition~\ref{prop:vanilla_lower} with a vector of length $n/s$, and support set size at most $1$ (but error bounded by $4\varepsilon/s$), concluding the proof.
\end{proof}

In the asymptotic regime for sparse signals we have the following corollary, which shows that the procedure proposed in Section~\ref{sec:int} is nearly optimal when considering both error metrics.

\begin{corollary}[$s$-intervals] \label{cor:int_lower}
Consider the setting of Proposition~\ref{prop:int_lower}, and suppose $s=o(n)$ as $n \to \infty$.
\begin{enumerate}
\item[(i)] if $\displaystyle{\lim_{n \to \infty} \ \max_{S \in \C\cup\emptyset}} \ \P_S (\hat{S} \neq S) = 0$ necessarily
\[
\mu \geq \omega_n \sqrt{\frac{n}{sm}} \ ,
\]
\item[(ii)] if $\displaystyle{\lim_{n \to \infty} \ \max_{S \in \C\cup\emptyset}} \ \P_S (\hat{S} \neq S) = 0$ necessarily
\[
\mu \geq \sqrt{\frac{2n}{sm} (\log s + \omega_n)} \ ,
\]
\end{enumerate}
where $\omega_n$ is an arbitrary sequence such that $\omega_n \to \infty$.
\end{corollary}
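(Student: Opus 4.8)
The plan is to obtain Corollary~\ref{cor:int_lower} as the large-$n$ limit of the non-asymptotic bounds in Proposition~\ref{prop:int_lower}, by letting the error level vanish. Fix any adaptive sensing and estimation strategy and, at signal dimension $n$, let $\varepsilon_n$ be the worst-case error it incurs, i.e.\ $\varepsilon_n=\max_{S\in\C\cup\emptyset}\P_S(\hat S\neq S)$ for part $(i)$ and $\varepsilon_n=\max_{S\in\C\cup\emptyset}\E_S(|\hat S\triangle S|)$ for part $(ii)$ (the latter being the Hamming version of the hypothesis, which is the one compatible with the bound quoted in $(ii)$ and with Proposition~\ref{prop:int_lower}$(ii)$). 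The hypothesis of the corollary is precisely that $\varepsilon_n\to 0$; in particular $\varepsilon_n<1$ for all large $n$, so Proposition~\ref{prop:int_lower} applies with $\varepsilon=\varepsilon_n$, and everything reduces to inserting this vanishing sequence and simplifying using $s=o(n)$.

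For part $(i)$, Proposition~\ref{prop:int_lower}$(i)$ gives $\mu\sqrt{\tfrac{sm}{n}}\geq\sqrt{2\log\tfrac{1}{2\varepsilon_n}}$, whose right-hand side diverges because $\varepsilon_n\to 0$; hence $\mu\sqrt{\tfrac{sm}{n}}\to\infty$, and taking $\omega_n:=\sqrt{2\log\tfrac{1}{2\varepsilon_n}}$ (or any divergent sequence it eventually dominates) yields the stated condition $\mu\geq\omega_n\sqrt{\tfrac{n}{sm}}$. This step uses nothing about the sparsity level. For part $(ii)$, Proposition~\ref{prop:int_lower}$(ii)$ together with $\log\tfrac{s}{8\varepsilon_n}=\log s+\log\tfrac{1}{8\varepsilon_n}$ gives
\[
\mu^{2}\ \geq\ \frac{2n}{sm}\cdot\frac{n-s}{n}\left(\log s+\log\frac{1}{8\varepsilon_n}+\log\frac{n-s}{n+s}\right)\ .
\]
Since $s=o(n)$ we have $\tfrac{n-s}{n}\to 1$ and $\log\tfrac{n-s}{n+s}\to 0$, while $\log\tfrac{1}{8\varepsilon_n}\to\infty$; collecting the three correction terms and the $(1-o(1))$ prefactor into one divergent sequence $\omega_n$ then produces $\mu\geq\sqrt{\tfrac{2n}{sm}(\log s+\omega_n)}$.

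Everything here is routine bookkeeping, so the ``main obstacle'' is only a matter of care: in the last step of part $(ii)$, replacing $(n-s)/n$ by $1$ leaves a correction of order $\tfrac{s}{n}\log s$, and one must verify that the divergent quantity $\log\tfrac{1}{\varepsilon_n}$ dominates it so that the bracket really is $\log s$ plus something tending to infinity. This is where the sparsity assumption does its work; it is also the reason the clean form is stated in a sparse regime — indeed in the regime $s=o(\sqrt{n/\log n})$ of the matching upper bound (Corollary~\ref{cor:int_upper}) one has $\tfrac{s}{n}\log s\to 0$ and the simplification is immediate, whereas for $s$ comparable to $n$ one would simply keep the $(n-s)/n$ factor, i.e.\ state the bound in the form already given by Proposition~\ref{prop:int_lower}$(ii)$. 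Part $(i)$ needs no such hypothesis because its bound carries no $\log s$ term for the correction to compete with.
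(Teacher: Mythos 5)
Your part $(i)$ is exactly the intended argument and is correct: setting $\varepsilon_n$ equal to the strategy's worst-case probability of error and invoking Proposition~\ref{prop:int_lower}$(i)$ gives $\mu\sqrt{sm/n}\geq\sqrt{2\log\frac{1}{2\varepsilon_n}}\to\infty$, which is the content of the claim. You also correctly read part $(ii)$ of the corollary as the Hamming-distance statement (the paper's hypothesis in $(ii)$ is a typo), and your route --- plug $\varepsilon=\varepsilon_n\to0$ into Proposition~\ref{prop:int_lower}$(ii)$ and absorb the corrections into $\omega_n$ --- is the route the corollary is meant to follow.

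However, in part $(ii)$ the step you yourself flag as the ``main obstacle'' is not actually closed by your argument. Writing the bound as
\[
\mu^2\frac{sm}{2n}\ \geq\ \Bigl(1-\tfrac{s}{n}\Bigr)\Bigl(\log s+\log\tfrac{1}{8\varepsilon_n}+\log\tfrac{n-s}{n+s}\Bigr)
\ =\ \log s+\Bigl[\bigl(1-\tfrac{s}{n}\bigr)\bigl(\log\tfrac{1}{8\varepsilon_n}+\log\tfrac{n-s}{n+s}\bigr)-\tfrac{s}{n}\log s\Bigr]\ ,
\]
you need the bracketed term to diverge. You assert that ``this is where the sparsity assumption does its work,'' but $s=o(n)$ alone does not make $\tfrac{s}{n}\log s$ vanish or even stay bounded: for instance $s=n/\log\log n$ is $o(n)$ yet $\tfrac{s}{n}\log s\sim\log n/\log\log n\to\infty$. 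Since the hypothesis only guarantees $\varepsilon_n\to0$, possibly arbitrarily slowly, $\log\tfrac{1}{\varepsilon_n}$ need not dominate $\tfrac{s}{n}\log s$ (and combining with part $(i)$ via $\P_S(\hat S\neq S)\leq\E_S(|\hat S\triangle S|)$ does not rescue this, since that again only yields a term of order $\log\tfrac{1}{\varepsilon_n}$). So under the corollary's literal assumption $s=o(n)$ your derivation of $\mu\geq\sqrt{\tfrac{2n}{sm}(\log s+\omega_n)}$ is incomplete; it becomes valid exactly when $\tfrac{s}{n}\log s=O(1)$, e.g.\ under $s\log s=o(n)$, which covers $s=O(n^{\beta})$ with $\beta<1$ and in particular the regime $s=o(\sqrt{n/\log n})$ of the matching upper bound that you mention. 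You should either state and use this (mildly stronger) sparsity condition explicitly, or keep the $(n-s)/n$ prefactor in the conclusion; as written, the domination claim is the missing piece, and it is in fact false in general under $s=o(n)$.
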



\subsubsection{Unions of $s$-intervals}

Consider again a slight modification of the class of interest, namely let $\C$ denote the class of all disjoint unions of $k$ or $(k-1)$ $s$-intervals. Similarly to the previous case, we reduce the problem to look like the general $s$-sparse case, and apply Proposition~\ref{prop:vanilla_lower}.

\begin{proposition} \label{prop:u_int_lower}
Let $\varepsilon \in (0,1)$ be fixed and suppose that $\displaystyle{\max_{S \in \C}} \ \E_S (|\hat{S} \triangle S|) \leq \varepsilon$. Then necessarily
\[
\mu \geq \sqrt{\frac{2(n-sk)}{sm} \left( \log k + \log\frac{n-sk}{n+s} + \log\frac{s}{8\varepsilon} \right)}\ .
\]
\end{proposition}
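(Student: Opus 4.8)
The plan is to mirror the proof of Proposition~\ref{prop:int_lower}(ii): coarsen the index set so that the problem looks like unstructured $k$-sparse recovery, and then invoke Proposition~\ref{prop:vanilla_lower}. Assume $n/s$ is an integer and let $\D$ denote the partition of $\{1,\dots,n\}$ into the $N:=n/s$ consecutive $s$-blocks of \eqref{eqn:disjoint_int}. Let $\tilde\C\subseteq\C$ be the subclass of all disjoint unions of $k$ or $(k-1)$ blocks of $\D$; since $\tilde\C\subseteq\C$ it suffices to prove the bound for $\tilde\C$. Identifying each block of $\D$ with a single ``super-coordinate'', $\tilde\C$ is precisely the class of all subsets of cardinality $k$ or $k-1$ of a universe of size $N$, and it is closed under permutations of those $N$ super-coordinates.

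First I would reduce to estimators of the form \eqref{eqn:estimator_union}, i.e.\ unions of blocks of $\D$. Exactly as in the proof of Proposition~\ref{prop:int_lower}, given $\hat S$ with $\max_{S\in\tilde\C}\E_S(|\hat S\triangle S|)\le\varepsilon$, the block-rounded estimator $\tilde S$ obtained by including block $S_d$ if and only if $|\hat S\cap S_d|\ge s/2$ satisfies $\max_{S\in\tilde\C}\E_S(|\tilde S\triangle S|)\le4\varepsilon$. For such a block-estimator,
\[
\E_S(|\tilde S\triangle S|)=s\sum_{i=1}^{N}\P_S\big(\1\{S_i\subseteq\tilde S\}\neq\1\{S_i\subseteq S\}\big)\ ,
\]
so, read over the $N$ super-coordinates, $\tilde S$ estimates a $k$- or $(k-1)$-subset of $\{1,\dots,N\}$ with worst-case expected Hamming distance at most $4\varepsilon/s$. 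Moreover the precision budget transfers unchanged: assigning precision to super-coordinate $i$ means distributing precision among the $s$ coordinates of the block $S_i$, and since the signal equals $\mu$ on all of them, $D(\P_S\|\P_{S'})=\frac{\mu^2}{2}\,\E_S\big(\sum_{j:A_j\in S_i}\Gamma_j\big)$ whenever $S'=S\cup S_i$ --- that is, the per-block contributions combine exactly as the precision of one coordinate would, with total expected budget still at most $m$. These are the only two properties (the budget bound and the single-coordinate Kullback--Leibler identity) used in the proof of Proposition~\ref{prop:vanilla_lower}, and $\tilde\C$ is permutation-closed so the symmetrization of Lemma~4.1 in \cite{AS_Rui_2012} applies; hence Proposition~\ref{prop:vanilla_lower} carries over to the coarsened problem.

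Applying Proposition~\ref{prop:vanilla_lower} with $n\mapsto N=n/s$, $s\mapsto k$, the same budget $m$ and $\varepsilon\mapsto4\varepsilon/s$ gives
\[
\mu\ge\sqrt{\frac{2(N-k)}{m}\left(\log k+\log\frac{N-k}{N+1}+\log\frac{1}{2\cdot(4\varepsilon/s)}\right)}\ ,
\]
and the identities $\tfrac{2(N-k)}{m}=\tfrac{2(n-sk)}{sm}$, $\log\tfrac{N-k}{N+1}=\log\tfrac{n-sk}{n+s}$ and $\log\tfrac{1}{2\cdot(4\varepsilon/s)}=\log\tfrac{s}{8\varepsilon}$ yield exactly the claimed bound. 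I expect the main obstacle to be the reduction step --- checking that restricting to block-estimators costs at most the stated constant factor and that the symmetrization and precision bookkeeping genuinely transfer to the super-coordinate problem --- but all of this is handled just as in the proof of Proposition~\ref{prop:int_lower}, with $s$-intervals replaced by the present unions.
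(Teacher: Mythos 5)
Your proposal is correct and follows essentially the same route as the paper: restrict to the subclass of unions of $k$ or $k-1$ disjoint consecutive $s$-blocks, round the estimator to a union of blocks at a cost of a factor $4$, view the blocks as super-coordinates to obtain an unstructured problem of dimension $n/s$ with sparsity $k$ or $k-1$ and Hamming error $4\varepsilon/s$, and apply Proposition~\ref{prop:vanilla_lower}. In fact you spell out the budget/Kullback--Leibler bookkeeping and the permutation-closure needed for the symmetrization more explicitly than the paper does.
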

\begin{proof}
Assume again for sake of simplicity that $n/s$ is an integer and consider the class of consecutive $s$-intervals $\D$ defined in \eqref{eqn:disjoint_int}.
Let $\tilde{\D} \subset \C$ be the class which contains unions of $k$ or $(k-1)$ elements of $\D$. It suffices to consider only estimators that satisfy \eqref{eqn:estimator_union} since if there is a general estimator $\hat S$ for which $\max_{S\in\C} \E_S (|\hat S \triangle S|) \leq \varepsilon$ for all $S \in \C$ then there is an estimator $\tilde{S}$ of the form \eqref{eqn:estimator_union} satisfying $\max_{S\in\C} \E_S (|\tilde{S} \triangle S|) \leq 4\varepsilon$. Therefore the problem can once again be viewed as the unstructured case involving a vector of length $n/s$ and sparsity $k$ or $(k-1)$, and requiring that the estimator has expected Hamming-distance at most $4\varepsilon /s$. Using Proposition~\ref{prop:vanilla_lower} concludes the proof.
\end{proof}

\begin{corollary}[unions of $s$-intervals] \label{cor:u_int_lower}
Consider the setting of Proposition~\ref{prop:u_int_lower}, and suppose $sk = o(n)$ as $n\to \infty$. If there is an adaptive sensing and estimation strategy such that $\displaystyle{\lim_{n \to \infty} \ \max_{S \in \C}} \ \E_S (|\hat{S} \triangle S|) = 0$ then necessarily
\[
\mu \geq \sqrt{\frac{2n}{sm} (\log ks+\omega_n)} \ ,
\]
where $\omega_n$ is an arbitrary sequence for which $\omega_n \to \infty$.
\end{corollary}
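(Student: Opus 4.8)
The plan is to derive Corollary~\ref{cor:u_int_lower} as a straightforward asymptotic consequence of Proposition~\ref{prop:u_int_lower}: drive the error tolerance $\varepsilon$ to zero and simplify the resulting bound using the sparsity assumption $sk=o(n)$ (which also forces $s=o(n)$).

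First I would fix an arbitrary $\varepsilon\in(0,1)$; since $\max_{S\in\C}\E_S(|\hat S\triangle S|)\to 0$, for all $n$ large enough (depending on $\varepsilon$) the worst-case Hamming error is at most $\varepsilon$, so Proposition~\ref{prop:u_int_lower} applies and yields
\[
\mu^2\ \ge\ \frac{2(n-sk)}{sm}\left(\log(ks)+\log\frac{n-sk}{n+s}+\log\frac{1}{8\varepsilon}\right),
\]
where I have combined $\log k$ with the $\log s$ hidden in $\log\frac{s}{8\varepsilon}$ to expose the target factor $\log(ks)$, leaving everything else destined for the sequence $\omega_n$. Dividing by $\tfrac{2n}{sm}$ and writing $\tfrac{n-sk}{n}=1-\tfrac{sk}{n}$, this rearranges into
\[
\frac{\mu^2 sm}{2n}-\log(ks)\ \ge\ \Big(1-\tfrac{sk}{n}\Big)\Big(\log\tfrac{n-sk}{n+s}+\log\tfrac{1}{8\varepsilon}\Big)-\tfrac{sk}{n}\log(ks).
\]
Since $sk=o(n)$ and $s=o(n)$ we have $1-\tfrac{sk}{n}\to1$ and $\log\tfrac{n-sk}{n+s}\to0$, so the $\liminf_{n\to\infty}$ of the right-hand side equals $\log\tfrac{1}{8\varepsilon}$ (using the vanishing of the correction term discussed below). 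As $\varepsilon\in(0,1)$ was arbitrary, letting $\varepsilon\downarrow0$ forces $\tfrac{\mu^2 sm}{2n}-\log(ks)\to\infty$; taking $\omega_n:=\tfrac{\mu^2 sm}{2n}-\log(ks)$ then delivers exactly the claimed bound $\mu\ge\sqrt{\tfrac{2n}{sm}(\log ks+\omega_n)}$ with $\omega_n\to\infty$.

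The only step requiring genuine care is verifying that the correction term $\tfrac{sk}{n}\log(ks)$ vanishes: $sk=o(n)$ gives $\tfrac{sk}{n}\to0$, and since eventually $ks\le n$ one has $\log(ks)\le\log n$, so it suffices that $sk\log n=o(n)$, which holds under the paper's standing sparsity assumptions (e.g. $ks=o(\sqrt n)$). If one prefers to invoke no extra condition, one can sidestep this entirely by keeping the slightly sharper conclusion $\mu\ge\sqrt{\tfrac{2(n-sk)}{sm}(\log ks+\omega_n')}$ with $\omega_n'\to\infty$, which trivially implies the stated inequality. Everything else — the choice of $\varepsilon$, collecting the $o(1)$ terms, and the final limit — is routine bookkeeping.
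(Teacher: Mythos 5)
Your main argument is exactly the paper's intended derivation: the corollary is stated without an explicit proof, being the routine limit of Proposition~\ref{prop:u_int_lower} obtained by fixing $\varepsilon$, writing $\log k+\log s=\log ks$, noting $\log\frac{n-sk}{n+s}\to 0$ and $\frac{n-sk}{n}\to 1$, and then letting $\varepsilon\downarrow 0$; you carry this out correctly and, commendably, you isolate the one genuinely delicate point, namely the correction term $\frac{sk}{n}\log ks$ coming from the prefactor $\frac{n-sk}{n}$, which vanishes under the paper's standing sparsity regime (e.g.\ $ks=o(\sqrt{n})$, or any condition giving $sk\log n=o(n)$) but is not controlled by the bare hypothesis $sk=o(n)$.

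One caveat: your proposed ``sidestep'' does not actually sidestep anything. The bound $\mu\ge\sqrt{\frac{2(n-sk)}{sm}\left(\log ks+\omega_n'\right)}$ is \emph{weaker}, not sharper, than the target, since $n-sk\le n$; expanding $\left(1-\frac{sk}{n}\right)\left(\log ks+\omega_n'\right)$ reintroduces precisely the term $\frac{sk}{n}\log ks$, so it does not ``trivially imply'' the stated inequality unless that term is again controlled. The honest conclusion is the one your main argument delivers: under $sk=o(n)$ together with the paper's standing sparsity assumptions the corollary follows, while under $sk=o(n)$ alone the stated form is slightly loose --- a looseness inherited from the paper's own (unproved) statement rather than introduced by you.
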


The previous statements show the near-optimality of the procedure proposed in Section~\ref{sec:u_int}.


\subsubsection{$s$-stars and unions of $s$-stars}

The lower bounds for this class follow from similar arguments as the ones used for $s$-intervals by considering a maximal subclass of disjoint $s$-stars (meaning these do not share any edges). Let $N(p,s)$ be the size of such a subclass. We have the following lemma.
\begin{lemma}\label{lemma:disjoint_stars}
Let $N(p,s)$ denote the maximal number of disjoint stars of size $s$ in a complete graph with $p$ vertices. We have
\[
N(p,s) \geq \frac{p(p-1-s)}{2s} \ .
\]
\end{lemma}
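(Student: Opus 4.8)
The statement is a counting lemma about packing edge-disjoint stars of size $s$ into the complete graph $K_p$, so the natural approach is a greedy/direct construction combined with a double-counting argument. I would build the disjoint stars one vertex-center at a time: pick a vertex $v_1$, form a star of size $s$ centered at $v_1$ using $s$ of its incident edges; then pick another vertex $v_2$, form a star of size $s$ centered at $v_2$ using $s$ of its incident edges that have not yet been used, and so on. The key bookkeeping question is: after having formed $j$ such stars (using $js$ edges in total), how many unused edges remain incident to a freshly chosen center vertex? Since each vertex has degree $p-1$ in $K_p$, and the $js$ edges used so far can eliminate at most $js$ of the $p-1$ edges at the new center, there are at least $p-1-js$ unused edges available. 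Hence as long as $p-1-js \geq s$, i.e. $j \leq (p-1-s)/s = (p-1)/s - 1$, we can form another star. This already yields $N(p,s) \geq \lfloor (p-1)/s - 1\rfloor \geq (p-1-s)/s$, which is of the right order but a factor $2$ off from the claimed bound.

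To recover the extra factor of $2$ one must be smarter: the bound $p-1-js$ on available edges is wasteful because a used edge $\{u,w\}$ only blocks the new center $v$ if $v \in \{u,w\}$, and most used edges are not incident to $v$. So I would instead choose the center of the $(j+1)$-st star to be a vertex $v$ minimizing the number of already-used incident edges, or alternatively count globally.  The cleanest version is a global double-counting: suppose we have greedily packed $N$ disjoint $s$-stars and can pack no more.  The $Ns$ used edges contribute $2Ns$ endpoint-incidences spread over $p$ vertices, so the average number of used edges at a vertex is $2Ns/p$. Maximality means every vertex $v$ has fewer than $s$ unused incident edges — otherwise we could center another star at $v$ — i.e. every vertex has more than $p-1-s$ used incident edges. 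Summing over all $p$ vertices, the total endpoint-incidences of used edges is more than $p(p-1-s)$, but this total is exactly $2Ns$.  Therefore $2Ns > p(p-1-s)$, which is slightly off; the right inequality direction comes from running the argument on a \emph{maximal} packing and getting $2Ns \geq p(p-1-s)$, hence $N \geq p(p-1-s)/(2s)$, which is exactly the claimed bound.

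**Main obstacle.**  The delicate point is making the "every vertex has fewer than $s$ unused incident edges at a maximal packing" step rigorous and getting the inequality (strict vs. non-strict) to land correctly so that $N(p,s) \geq p(p-1-s)/(2s)$ holds \emph{with} the factor of $2$ rather than just $N(p,s) \gtrsim (p-1-s)/s$ from the crude greedy bound. One has to be careful that "maximal" means no further star of size \emph{exactly} $s$ can be added, which forces every vertex to have at most $s-1$ unused incident edges, hence at least $(p-1)-(s-1) = p-s$ used ones; summing gives $2Ns \geq p(p-s) \geq p(p-1-s)$, and dividing by $2s$ finishes it. A secondary technical nuisance is integrality — $p-1$, $s$, etc. need not divide evenly — but since the conclusion is an inequality with a non-integer right-hand side this causes no real trouble; one simply observes the count $2Ns$ is an integer bounded below by $p(p-1-s)$, so $N \geq \lceil p(p-1-s)/(2s)\rceil \geq p(p-1-s)/(2s)$.
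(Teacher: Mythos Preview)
Your proposal is correct and follows essentially the same approach as the paper: both argue that in a \emph{maximal} edge-disjoint packing of $s$-stars, every vertex must have small ``unused degree'' (otherwise another star could be centered there), then double-count endpoint incidences to lower-bound the number of packed edges and hence the number of stars. The paper phrases this as greedily building graphs $G_0,G_1,\ldots,G_K$ and observing $d_K(v)\geq p-1-s$ for all $v$, while you phrase it as ``at most $s-1$ unused incident edges'' yielding $d(v)\geq p-s$; your bound is marginally sharper but the argument is the same, and the initial crude greedy discussion in your plan is unnecessary once you arrive at the double-counting step.
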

With this in mind we can get a performance lower bound, proved in an analogous way to that of Proposition~\ref{prop:int_lower_v2}.
\begin{proposition} \label{prop_star_lower_v2}
Let $\C$ be the class of $s$-stars. Assume $\displaystyle{\max_{S \in \C}} \ \P_S (\hat{S} \neq S) \leq \varepsilon$. Then necessarily
\[
\mu \geq (1- \varepsilon) \sqrt{\frac{N(p,s)}{2m}} \ .
\]
\end{proposition}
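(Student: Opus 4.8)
The plan is to reproduce, \emph{mutatis mutandis}, the argument of Proposition~\ref{prop:int_lower_v2}, with the family of disjoint consecutive intervals replaced by a maximal edge-disjoint family of $s$-stars. First I would invoke Lemma~\ref{lemma:disjoint_stars} to fix a collection $\D$ of $N(p,s)$ pairwise edge-disjoint $s$-stars; since $\D\subseteq\C$, a lower bound obtained under the restriction $S\in\D$ is also valid for $\C$. Assuming without loss of generality that $N(p,s)$ is even (otherwise discard one star, which only weakens the constant by a lower-order amount), split $\D=\D_1\cup\D_2$ into two halves of size $N(p,s)/2$, and set up the test between the two simple hypotheses $H_1:\ S\sim\mathrm{Unif}(\D_1)$ and $H_2:\ S\sim\mathrm{Unif}(\D_2)$, where under $H_i$ the data $D=\{Y_j,A_j,\Gamma_j\}_{1,2,\dots}$ is then generated from the model indexed by the drawn $S$.

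Next I would argue exactly as before: an estimator $\hat S$ with $\P_S(\hat S\neq S)\leq\varepsilon$ for all $S\in\D$ induces, via $\Phi(D)=1$ iff $\hat S\in\D_1$ (and $2$ otherwise), a test with $\P_1(\Phi=2)+\P_2(\Phi=1)\leq\varepsilon$. Writing $\P_0$ for the law of $D$ when $S=\emptyset$ and invoking the triangle inequality for total variation followed by Pinsker's inequality, together with the symmetry of the construction (so that $D(\P_0\|\P_1)$ and $D(\P_0\|\P_2)$ obey the same bound), one gets
\[
\varepsilon\ \geq\ \P_1(\Phi=2)+\P_2(\Phi=1)\ \geq\ 1-\big(TV(\P_0,\P_1)+TV(\P_0,\P_2)\big)\ \geq\ 1-\sqrt{2\,D(\P_0\|\P_1)}\,.
\]
The only step that genuinely differs from the interval case is the bound on $D(\P_0\|\P_1)$: expanding the likelihood ratio of the mixture over $\D_1$ against $\P_0$ and applying Jensen's inequality verbatim as in Proposition~\ref{prop:int_lower_v2} gives
\[
D(\P_0\|\P_1)\ \leq\ \frac{\mu^2}{2}\,\frac{1}{|\D_1|}\sum_{S\in\D_1\cup\D_2}\E_0\!\left[\sum_{j:\,A_j\in S}\Gamma_j\right]\,.
\]
Here the key observation is that, because the stars in $\D$ are pairwise edge-disjoint, each coordinate of $\vec{x}$ lies in at most one $S\in\D$, hence $\sum_{S\in\D_1\cup\D_2}\sum_{j:A_j\in S}\Gamma_j\leq\sum_j\Gamma_j$ and the double sum is at most $\E_0(\sum_j\Gamma_j)\leq m$ by the budget constraint~\eqref{eqn:budget}. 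With $|\D_1|=N(p,s)/2$ this yields $D(\P_0\|\P_1)\leq \mu^2 m/N(p,s)$, so the display above becomes $\varepsilon\geq 1-\mu\sqrt{2m/N(p,s)}$, which rearranges to $\mu\geq(1-\varepsilon)\sqrt{N(p,s)/(2m)}$.

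I do not anticipate a serious obstacle: once Lemma~\ref{lemma:disjoint_stars} is available the whole argument is parallel to Proposition~\ref{prop:int_lower_v2}. The single point requiring care is noticing that \emph{edge-disjointness}, rather than an exact partition of $\{1,\dots,n\}$, already suffices to bound the aggregated precision over the family by $m$; the remaining items (parity of $N(p,s)$, the Pinsker/triangle-inequality chain, and reproducing the Jensen step for the Kullback--Leibler bound) are routine bookkeeping.
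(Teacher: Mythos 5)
Your proposal is correct and follows exactly the route the paper intends: the paper proves Proposition~\ref{prop_star_lower_v2} "in an analogous way" to Proposition~\ref{prop:int_lower_v2}, replacing the partition into consecutive intervals by the maximal edge-disjoint star family of Lemma~\ref{lemma:disjoint_stars}, splitting it into two halves, and running the same Pinsker/Jensen Kullback--Leibler bound against $\P_0$ with the budget constraint. Your added remark that edge-disjointness (rather than a full partition of $\{1,\dots,n\}$) is what makes the aggregated-precision bound $\leq m$ go through is precisely the one non-routine observation needed.
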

We can also get results analogous to Propositions~\ref{prop:int_lower} and \ref{prop:u_int_lower}, and Corollaries~\ref{cor:int_lower} and \ref{cor:u_int_lower}. We only state the results for the unions of $s$-stars, which shows the near-optimality of the proposed procedure.
\begin{proposition} \label{prop:u_star_lower}
Let $\C'$ denote the class of unions of $k$ or $k-1$ disjoint $s$-stars. Let $\varepsilon \in (0,1)$ be fixed and suppose that $\displaystyle{\max_{S \in \C}} \ \E_S (|\hat{S} \triangle S|) \leq \varepsilon$. Then necessarily
\[
\mu \geq \sqrt{\frac{2(N(p,s) -k)}{m} \left( \log k + \log \frac{N(p,s)-k}{N(p,s)+1} + \log \frac{s}{8\varepsilon}\right)} \ .
\]
\end{proposition}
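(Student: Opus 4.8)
The plan is to follow the same strategy as the proofs of Propositions~\ref{prop:int_lower} and~\ref{prop:u_int_lower}: reduce the problem to the unstructured support-recovery problem governed by Proposition~\ref{prop:vanilla_lower}. First I would fix a maximal family $\D_\star$ of pairwise edge-disjoint $s$-stars in the complete graph on $p$ vertices, so that $|\D_\star|=N(p,s)\geq p(p-1-s)/(2s)$ by Lemma~\ref{lemma:disjoint_stars}. Let $\tilde\D\subseteq\C'$ be the subclass consisting of unions of $k$ or $k-1$ distinct members of $\D_\star$; these are legitimate disjoint unions of $s$-stars because the members of $\D_\star$ share no edges. Since $\tilde\D\subseteq\C'$, it is enough to establish the lower bound under the weaker hypothesis $\max_{S\in\tilde\D}\E_S(|\hat S\triangle S|)\leq\varepsilon$.

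The second step is to reduce to estimators of the form $\hat S=\bigcup_{d\in D}S_d$ with $D\subseteq\D_\star$. Given any estimator $\hat S$, declare $S_d\subseteq\tilde S$ iff $|\hat S\cap S_d|\geq s/2$. Because the stars in $\D_\star$ are edge-disjoint and any element of $\tilde\D$ is a union of them, every ``rounding'' error committed by $\tilde S$ on a block $S_d$ can be charged to at least $s/2$ errors of $\hat S$ inside $S_d$, while errors of $\hat S$ on edges outside $\bigcup\D_\star$ only work in our favour; hence $\E_S(|\tilde S\triangle S|)\leq 4\varepsilon$ for every $S\in\tilde\D$, exactly as in the interval cases.

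For such block estimators one has $\E_S(|\hat S\triangle S|)=s\sum_{d\in\D_\star}\P_S\!\left(\1\{S_d\subseteq\hat S\}\neq\1\{S_d\subseteq S\}\right)$, so the problem becomes an unstructured recovery problem on a vector of length $N(p,s)$ with support size $k$ or $k-1$: each super-coordinate $d$ represents the star $S_d$, querying it with some precision amounts to measuring one or more of its $s$ edges with signal magnitude still $\mu$ (the relevant divergence being $(\mu^2/2)\,\E_S(\sum_{j:A_j\in S_d}\Gamma_j)$), the stars being edge-disjoint means the total precision over all super-coordinates is still at most $m$, and measurements of edges outside $\bigcup\D_\star$ carry no information distinguishing elements of $\tilde\D$ and may be discarded. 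The reduced problem thus demands worst-case expected Hamming error at most $4\varepsilon/s$. Invoking Proposition~\ref{prop:vanilla_lower} with $n\mapsto N(p,s)$, $s\mapsto k$, $m\mapsto m$ and $\varepsilon\mapsto 4\varepsilon/s$ yields
\[
\mu\geq\sqrt{\frac{2(N(p,s)-k)}{m}\left(\log k+\log\frac{N(p,s)-k}{N(p,s)+1}+\log\frac{s}{8\varepsilon}\right)}\ ,
\]
which is precisely the asserted bound.

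The algebraic bookkeeping in the final step is routine. The delicate points — and the main obstacle — are verifying that the block-rounding in the second step costs only a constant factor in $\varepsilon$, and checking in the third step that the precision budget transfers faithfully to the $N(p,s)$-dimensional surrogate despite the facts that distinct stars in $\D_\star$ may share a vertex (they share no edge, which is what matters) and that $\bigcup\D_\star$ need not cover all $n$ edges.
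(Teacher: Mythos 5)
Your proof is correct and follows exactly the route the paper intends: it only sketches this result as "analogous to Propositions~\ref{prop:int_lower} and \ref{prop:u_int_lower}", i.e.\ pass to a maximal edge-disjoint family of $N(p,s)$ stars, round to block estimators at a factor-$4$ cost in $\varepsilon$, and invoke Proposition~\ref{prop:vanilla_lower} with $n\mapsto N(p,s)$, $s\mapsto k$, $\varepsilon\mapsto 4\varepsilon/s$. Your bookkeeping (including the $\log\frac{s}{8\varepsilon}$ term and the precision-transfer point for edges outside $\bigcup\D_\star$) matches the paper's argument, and is in fact spelled out slightly more carefully than in the paper.
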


\begin{corollary}[unions of $s$-stars] \label{cor:u_star_lower}
Consider the setting of Proposition~\ref{prop:u_star_lower}, and suppose $ks = o(n)$ as $n\to\infty$. If there is an adaptive sensing and estimation strategy such that $\displaystyle{\lim_{n \to \infty} \ \max_{S \in \C}} \ \E_S (|\hat{S} \triangle S|) = 0$ then necessarily
\[
\mu \geq \sqrt{\frac{2n}{sm} (\log ks+\omega_n)} \ ,
\]
where $\omega_n$ is an arbitrary sequence for which $\omega_n \to \infty$.
\end{corollary}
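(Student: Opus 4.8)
The plan is to obtain this asymptotic statement as a direct specialization of the non-asymptotic bound in Proposition~\ref{prop:u_star_lower} to the sparse large-$n$ regime, in exactly the same spirit as Corollary~\ref{cor:vanilla_lower} follows from Proposition~\ref{prop:vanilla_lower} and Corollary~\ref{cor:int_lower} from Proposition~\ref{prop:int_lower}. First I would use the hypothesis $\lim_{n\to\infty}\max_{S\in\C}\E_S(|\hat S\triangle S|)=0$ to set $\varepsilon=\varepsilon_n:=\max_{S\in\C}\E_S(|\hat S\triangle S|)$, a sequence with $\varepsilon_n\to 0$ and $\varepsilon_n\in(0,1)$ for $n$ large, and feed it into Proposition~\ref{prop:u_star_lower}. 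This already gives, for all large $n$,
\[
\mu\geq\sqrt{\frac{2\bigl(N(p,s)-k\bigr)}{m}\left(\log k+\log\frac{N(p,s)-k}{N(p,s)+1}+\log\frac{s}{8\varepsilon_n}\right)}\ .
\]

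The second step is to translate the graph parameters into $n$. Since the $n$ coordinates of $\vec x$ are identified with the edges of a complete graph on $p$ vertices, $n=\binom p2$, hence $p=\tfrac12(1+\sqrt{1+8n})\sim\sqrt{2n}$. Lemma~\ref{lemma:disjoint_stars} then yields $N(p,s)\geq\tfrac{p(p-1-s)}{2s}=\tfrac ns-\tfrac p2=\tfrac ns\bigl(1-o(1)\bigr)$, where the last equality uses that $s$ is negligible next to $\sqrt n$. From $ks=o(n)$ one gets $k=o\bigl(N(p,s)\bigr)$, so $\log\tfrac{N(p,s)-k}{N(p,s)+1}=o(1)$ and $\tfrac{2(N(p,s)-k)}{m}=\tfrac{2n}{sm}\bigl(1-o(1)\bigr)$.

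Finally I would gather the logarithmic terms, $\log k+\log\tfrac{s}{8\varepsilon_n}=\log(ks)+\log\tfrac1{8\varepsilon_n}$ with $\log\tfrac1{8\varepsilon_n}\to\infty$, so the bracket equals $\log(ks)+\Omega_n$ for some $\Omega_n\to\infty$, and
\[
\mu^2\geq\frac{2n}{sm}\,(1-o(1))\bigl(\log(ks)+\Omega_n\bigr)\ .
\]
Choosing $\omega_n\to\infty$ slowly enough — slower than $\Omega_n$, and slow enough that the multiplicative $(1-o(1))$ factor and the leftover $o(1)$ additive corrections are absorbed — then gives $\mu\geq\sqrt{\tfrac{2n}{sm}(\log ks+\omega_n)}$, as claimed. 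I expect this last bit of bookkeeping to be the only real obstacle: one must verify that the $o(1)$ errors coming from $N(p,s)\approx n/s$ and from $\log\tfrac{N(p,s)-k}{N(p,s)+1}$ are genuinely dominated by the diverging term $\log\tfrac1{8\varepsilon_n}$, which is precisely where the sparsity hypothesis $ks=o(n)$ (together with the standing requirement that $s$ and $ks$ be small relative to $\sqrt n$) is used; everything else is routine substitution.
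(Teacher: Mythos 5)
Your derivation is correct and is exactly the route the paper intends: the corollary is left as a routine asymptotic specialization of Proposition~\ref{prop:u_star_lower}, obtained by taking $\varepsilon_n\to 0$, using Lemma~\ref{lemma:disjoint_stars} with $n=\binom{p}{2}$ (so $p\sim\sqrt{2n}$ and $N(p,s)\geq \tfrac{n}{s}-\tfrac{p}{2}$), and absorbing the vanishing corrections into the diverging term coming from $\log\tfrac{1}{8\varepsilon_n}$, under the paper's standing sparsity assumptions. Your bookkeeping caveat (that the $o(1)$ corrections multiplying $\log ks$ rely on $s$ and $ks$ being small relative to $\sqrt{n}$, not merely $ks=o(n)$) is accurate, and it is a looseness shared by the paper's own statement rather than a gap in your argument.
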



\subsubsection{$s$-submatrices}

Again, akin to the $s$-intervals and $s$-stars we can get lower bounds without including the empty set to the class when considering the probability of error as the metric of interest.
\begin{proposition} \label{prop:submat_lower_v2}
Let $\C$ be the class of submatrices, and suppose $n_1 ,n_2 ,s$ are such that we can cover the matrix $M$ entirely with disjoint submatrices of size $s$. Let $\varepsilon>0$ and suppose $\displaystyle{\max_{S \in \C}} \ \P_S (\hat{S} \neq S) \leq \varepsilon$. Necessarily
\[
\mu \geq (1- \varepsilon) \sqrt{\frac{n}{2sm}} \ .
\]
\end{proposition}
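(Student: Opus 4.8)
The plan is to reproduce, almost verbatim, the argument used to prove Proposition~\ref{prop:int_lower_v2}, with the disjoint intervals $\D$ of \eqref{eqn:disjoint_int} replaced by a tiling of the matrix $M$ by disjoint $s$-submatrices. By hypothesis on $n_1,n_2,s$ we may fix a collection $\D\subseteq\C$ of pairwise disjoint $s$-submatrices whose union is all of $\{1,\dots,n_1\}\times\{1,\dots,n_2\}$; necessarily $|\D|=n/s$. Since $\D\subseteq\C$ it suffices to prove the bound for the smaller class $\D$. Assuming without loss of generality that $n/s$ is even, split $\D$ into two equal-size halves $\D_1,\D_2$ and set up the two simple hypotheses $H_1:\ S\sim\mathrm{Unif}(\D_1)$ and $H_2:\ S\sim\mathrm{Unif}(\D_2)$, where under $H_i$ one first draws $S$ and then generates $D=\{Y_j,A_j,\Gamma_j\}_{j=1,2,\dots}$ according to $\P_S$.

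Next, from any estimator $\hat S$ with $\max_{S\in\D}\P_S(\hat S\neq S)\leq\varepsilon$ build the test $\Phi(D)=1$ if $\hat S\in\D_1$ and $\Phi(D)=2$ otherwise, so that $\P_1(\Phi=2)+\P_2(\Phi=1)\leq\varepsilon$, where $\P_i$ is the marginal law of $D$ under $H_i$. Letting $\P_0$ denote the law of $D$ when $S=\emptyset$, I would then run exactly the chain of inequalities leading to \eqref{eqn:double_kl}:
\begin{align*}
\varepsilon &\geq \P_1(\Phi=2)+\P_2(\Phi=1) \geq 1-TV(\P_1,\P_2)\\
&\geq 1-\big(TV(\P_0,\P_1)+TV(\P_0,\P_2)\big) = 1-2\,TV(\P_0,\P_1) \geq 1-\sqrt{2D(\P_0\|\P_1)}\ ,
\end{align*}
where the equality uses the symmetry of the construction and the last step is Pinsker's inequality.

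It remains to bound $D(\P_0\|\P_1)$, and here the computation is identical to the one in the proof of Proposition~\ref{prop:int_lower_v2}: writing $\P_1$ as the mixture $\tfrac{1}{|\D_1|}\sum_{S\in\D_1}\P_S$, pushing the logarithm inside by Jensen's inequality, and using that the per-observation log-likelihood ratio equals $-\tfrac{\Gamma_j}{2}\mu\,\mathbf{1}\{A_j\in S\}(\mu-2Y_j)$ so that $\E_0$ annihilates the $Y_j$ term, one obtains
\[
D(\P_0\|\P_1) \leq \frac{\mu^2}{2}\,\frac{1}{|\D_1|}\sum_{S\in\D_1}\E_0\!\Big[\sum_{j:A_j\in S}\Gamma_j\Big] \leq \frac{\mu^2}{2}\,\frac{1}{|\D_1|}\sum_{S\in\D}\E_0\!\Big[\sum_{j:A_j\in S}\Gamma_j\Big] \leq \frac{\mu^2}{2}\cdot\frac{2s}{n}\,m\ ,
\]
the last inequality because the submatrices in $\D$ are disjoint and cover every coordinate, hence $\sum_{S\in\D}\E_0[\sum_{j:A_j\in S}\Gamma_j]\leq\E_0[\sum_j\Gamma_j]\leq m$ by \eqref{eqn:budget}, and $|\D_1|=n/(2s)$. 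Combining this with the previous display gives $\varepsilon\geq 1-\sqrt{2\mu^2 sm/n}$, i.e.\ $\mu^2\geq (1-\varepsilon)^2 n/(2sm)$, which is the claim. The only points needing care are essentially bookkeeping: verifying that the stated divisibility/covering hypothesis genuinely yields a tiling $\D$ with $|\D|=n/s$, handling the parity of $n/s$ so that $\D_1,\D_2$ can be taken of equal size, and being precise with the sign of the likelihood-ratio increment so that the $Y_j$ cross term vanishes under $\E_0$; there is no substantive obstacle beyond that, since the conceptual core — that submatrix estimation is at least as hard as detecting $S=\emptyset$ versus $S\in\C$ — is inherited directly from the $s$-interval argument.
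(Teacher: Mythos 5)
Your proposal is correct and takes essentially the same route the paper intends: the paper presents Proposition~\ref{prop:submat_lower_v2} as proved ``akin to'' Proposition~\ref{prop:int_lower_v2}, i.e.\ by replacing the disjoint intervals $\D$ with a tiling of $M$ by disjoint $s$-submatrices and rerunning the two-hypothesis/Pinsker/Jensen argument with the budget constraint, which is precisely what you do, and your bookkeeping (the tiling has $n/s$ elements so $|\D_1|=n/(2s)$, and disjointness gives $\sum_{S\in\D}\E_0\bigl[\sum_{j:A_j\in S}\Gamma_j\bigr]\leq m$) is exactly what makes the final bound $\mu\geq(1-\varepsilon)\sqrt{n/(2sm)}$ come out.
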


We can once more derive results when including the empty set in the class.
\begin{proposition} \label{prop:submat_lower}
Suppose $n_1 ,n_2 ,s$ are such that we can cover the matrix $M$ entirely with disjoint submatrices of size $s$, and fix $\varepsilon \in (0,1)$.
\begin{enumerate}
\item[(i)] if $\displaystyle{\max_{S \in \C \cup \emptyset}} \ \P_S (\hat{S} \neq S) \leq \varepsilon$, then necessarily
\[
\mu \geq \sqrt{\frac{2n}{sm} \log \frac{1}{2 \varepsilon}} \ ,
\]
\item[(ii)] if $\displaystyle{\max_{S \in \C \cup \emptyset}} \ \E_S (|\hat{S} \triangle S|) \leq \varepsilon$, then necessarily
\[
\mu \geq \sqrt{\frac{2(n-s)}{sm} \left( \log\frac{n-s}{n+s} + \log\frac{s}{8\varepsilon} \right)} \ .
\]
\end{enumerate}
\end{proposition}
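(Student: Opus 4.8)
The plan is to reproduce the proof of Proposition~\ref{prop:int_lower} essentially verbatim, with the family of disjoint consecutive $s$-intervals in \eqref{eqn:disjoint_int} replaced by a tiling of $M$ by disjoint $s$-submatrices. Let $\D=\{S_1,\dots,S_{n/s}\}$ be a collection of pairwise disjoint $s$-submatrices that covers all $n$ entries of $M$; such a family exists by the hypothesis on $n_1,n_2,s$, and $|\D|=n/s$ since the tiles are disjoint and exhaust the matrix. Set $\tilde\C=\{\emptyset\}\cup\D\subseteq\C\cup\emptyset$. Since it suffices to prove the two lower bounds for the smaller class $\tilde\C$, the problem reduces to a ``one of $n/s$ disjoint size-$s$ clusters, or nothing'' problem, which is structurally identical to the disjoint-intervals situation.

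For part $(i)$ I would argue exactly as in Proposition~\ref{prop:int_lower}$(i)$. Any estimator with $\max_{S\in\tilde\C}\P_S(\hat S\neq S)\leq\varepsilon$ yields a test $\Phi=\1\{\hat S\neq\emptyset\}$ for $H_0:\,S=\emptyset$ versus $H_1:\,S\in\D$ whose type~I error is $\P_\emptyset(\hat S\neq\emptyset)\leq\varepsilon$ and whose type~II error under any $S\in\D$ is at most $\P_S(\hat S\neq S)\leq\varepsilon$. Because the $n/s$ tiles are disjoint and each has size $s$, this is precisely the adaptive detection problem governed by Theorem~3.1 in \cite{AS_Rui_2012} (applied as in the $s$-intervals case), which forces $\mu^2\geq\frac{2n}{sm}\log\frac{1}{2\varepsilon}$.

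For part $(ii)$ I would carry out the same two-step reduction used for $s$-intervals (Proposition~\ref{prop:int_lower}$(ii)$) and unions of $s$-intervals (Proposition~\ref{prop:u_int_lower}). First, it is enough to consider estimators of the form $\hat S=\bigcup_{d\in D}S_d$, $D\subseteq\D$: given an arbitrary $\hat S$ with $\max_{S\in\tilde\C}\E_S(|\hat S\triangle S|)\leq\varepsilon$, let $\tilde S$ include $S_d$ iff $|\hat S\cap S_d|\geq s/2$; because the tiles are disjoint, every tile on which $\tilde S$ disagrees with $S$ costs $\hat S$ at least $s/2$ Hamming errors, so $\E_S(|\tilde S\triangle S|)\leq 4\varepsilon$ for all $S\in\tilde\C$. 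Second, for such union estimators the problem collapses to the unstructured setting of Proposition~\ref{prop:vanilla_lower} on a vector of length $n/s$ with sparsity $0$ or $1$: each coordinate is a tile, the precision on a coordinate equals the precision spent on that tile (so the budget stays $m$), and since $|\tilde S\triangle S|=s\cdot(\text{number of tile-level errors})$ the Hamming requirement becomes $4\varepsilon/s$. Invoking Proposition~\ref{prop:vanilla_lower} with $n\leftarrow n/s$, $s\leftarrow 1$, $\varepsilon\leftarrow 4\varepsilon/s$ and simplifying $\tfrac{2(n/s-1)}{m}=\tfrac{2(n-s)}{sm}$, $\tfrac{n/s-1}{n/s+1}=\tfrac{n-s}{n+s}$ and $\log\tfrac{1}{2\cdot(4\varepsilon/s)}=\log\tfrac{s}{8\varepsilon}$ yields exactly the stated bound.

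Since the argument is structurally the same as in the interval case, I do not expect a genuine obstacle. The points that need care are: (a) checking that the hypothesis on $n_1,n_2,s$ really delivers a disjoint tiling of $M$ by exactly $n/s$ size-$s$ submatrices; and (b) verifying in the rounding step that a tile-level disagreement of $\tilde S$ always forces $\Omega(s)$ Hamming errors of $\hat S$, so that the constant-factor loss ($\varepsilon\to 4\varepsilon$) is legitimate and the constants in the final bound line up as above.
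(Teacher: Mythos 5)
Your proposal is correct and follows essentially the same route as the paper, which proves this proposition exactly by the analogy you describe: restrict to a disjoint tiling of $M$ by $n/s$ submatrices of size $s$, get part $(i)$ from the detection lower bound (Theorem~3.1 of \cite{AS_Rui_2012}) as in Proposition~\ref{prop:int_lower}$(i)$, and get part $(ii)$ by rounding to union-of-tiles estimators and invoking Proposition~\ref{prop:vanilla_lower} with $n\leftarrow n/s$, $s\leftarrow 1$, $\varepsilon\leftarrow 4\varepsilon/s$. Your parameter substitutions reproduce the stated constants exactly, so no gap remains.
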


The condition about the relation between $n_1 ,n_2$ and $s$ in the previous propositions is merely to simplify presentation. This can be easily relaxed by bounding the number of disjoint submatrices of size $s$ in a matrix of size $n_1 \times n_2$. For instance, one such bound is $\max \{ n_1 \cdot \lfloor n_2 /s \rfloor , n_2 \cdot \lfloor n_1 /s \rfloor \}$. This condition does not play a role when considering the behavior of the bound in the asymptotic regime for sparse signals in the following corollary.
\vspace{1pt}
\begin{corollary}[$s$-submatrices] \label{cor:submat_lower}
Consider the setting of Proposition~\ref{prop:submat_lower}, and suppose $s=o(n)$ as $n\to\infty$.
\begin{enumerate}
\item[(i)] if $\displaystyle{\lim_{n \to \infty} \ \max_{S \in \C \cup \emptyset}} \ \P_S (\hat{S} \neq S) = 0$, then
\[
\mu \geq \omega_n \sqrt{\frac{2n}{sm}} \ ,
\]
\item[(ii)] if $\displaystyle{\lim_{n \to \infty} \ \max_{S \in \C \cup \emptyset}} \ \E_S (|\hat{S} \triangle S|) = 0$, then
\[
\mu \geq \sqrt{\frac{2n}{sm} \left( \log s + \omega_n \right)} \ ,
\]
\end{enumerate}
where $\omega_n$ is an arbitrary sequence such that $\omega_n \to \infty$.
\end{corollary}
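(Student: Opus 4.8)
The plan is to obtain both parts as direct consequences of Proposition~\ref{prop:submat_lower}, which already furnishes non-asymptotic lower bounds on $\mu$ in terms of a prescribed worst-case error level $\varepsilon$. The single idea needed is that these bounds are monotone decreasing in $\varepsilon$, so one may instantiate them at the \emph{realized} worst-case error of the assumed estimator rather than at a fixed $\varepsilon$; once this is done, everything reduces to tracking how the bounds behave as that realized error tends to $0$ and $s=o(n)$.

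For part $(i)$, set $\varepsilon_n := \max_{S\in\C\cup\emptyset}\P_S(\hat S\neq S)$, which tends to $0$ by hypothesis. Since $\max_{S\in\C\cup\emptyset}\P_S(\hat S\neq S)\leq\varepsilon_n$ holds by definition, Proposition~\ref{prop:submat_lower}$(i)$ applies with $\varepsilon=\varepsilon_n$ and gives $\mu\geq\sqrt{\tfrac{2n}{sm}\log\tfrac{1}{2\varepsilon_n}}$. Putting $\omega_n:=\sqrt{\log\tfrac{1}{2\varepsilon_n}}$, which diverges because $\varepsilon_n\to 0$, yields $\mu\geq\omega_n\sqrt{2n/(sm)}$, as claimed. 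Note that this part does not even use the sparsity assumption.

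For part $(ii)$, set $\varepsilon_n := \max_{S\in\C\cup\emptyset}\E_S(|\hat S\triangle S|)\to 0$ and apply Proposition~\ref{prop:submat_lower}$(ii)$ with $\varepsilon=\varepsilon_n$ to get
\[
\mu^2 \;\geq\; \frac{2(n-s)}{sm}\left(\log\frac{n-s}{n+s} + \log\frac{s}{8\varepsilon_n}\right)
\;=\; \frac{2n}{sm}\left(1-\tfrac{s}{n}\right)\left(\log s + \log\tfrac{1}{8\varepsilon_n} + \log\tfrac{n-s}{n+s}\right)\ .
\]
Here $\log\tfrac{1}{8\varepsilon_n}\to\infty$, while $1-s/n\to 1$ and $\log\tfrac{n-s}{n+s}\to 0$ under $s=o(n)$. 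Collecting the correction terms one rewrites the right-hand side as $\tfrac{2n}{sm}(\log s+\omega_n)$ with $\omega_n:=(1-s/n)\log\tfrac{1}{8\varepsilon_n}+(1-s/n)\log\tfrac{n-s}{n+s}-\tfrac{s}{n}\log s$, and it remains only to verify $\omega_n\to\infty$, from which $\mu\geq\sqrt{\tfrac{2n}{sm}(\log s+\omega_n)}$ follows by monotonicity of the square root.

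That last verification is the only point requiring care, and it is where the sparsity hypothesis enters: the diverging term $(1-s/n)\log\tfrac{1}{8\varepsilon_n}$ must dominate the vanishing term $(1-s/n)\log\tfrac{n-s}{n+s}$ and the correction $\tfrac{s}{n}\log s$, which is the content of ``$s$ small relative to $n$'' in the intended sparse regime. Everything else is routine substitution into Proposition~\ref{prop:submat_lower}, so no tools beyond that proposition are needed; the analogous statements for the other classes (Corollaries~\ref{cor:int_lower}, \ref{cor:u_int_lower}, \ref{cor:u_star_lower}) are proved in exactly the same way from their respective non-asymptotic propositions.
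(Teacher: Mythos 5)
Your route is exactly the intended one: the paper states Corollary~\ref{cor:submat_lower} without any explicit proof, as an immediate consequence of Proposition~\ref{prop:submat_lower} obtained by inserting the realized worst-case error $\varepsilon_n\to 0$, which is precisely what you do. Part $(i)$ is complete and correct (and indeed does not need sparsity).

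In part $(ii)$, however, the one step you defer --- verifying that
$\omega_n=(1-\tfrac{s}{n})\log\tfrac{1}{8\varepsilon_n}+(1-\tfrac{s}{n})\log\tfrac{n-s}{n+s}-\tfrac{s}{n}\log s\to\infty$ --- is not implied by the hypotheses as stated, so describing it as ``the content of $s=o(n)$'' is not accurate. Under $s=o(n)$ alone the correction $\tfrac{s}{n}\log s$ need not vanish, nor even stay bounded: take $s=n/\log\log n$, for which $\tfrac{s}{n}\log s\sim\log n/\log\log n\to\infty$, while nothing prevents $\varepsilon_n\to 0$ arbitrarily slowly (say $\varepsilon_n=1/\log\log\log n$), so that $\log\tfrac{1}{8\varepsilon_n}$ diverges far more slowly than $\tfrac{s}{n}\log s$; in that regime your $\omega_n\to-\infty$ and the displayed conclusion does not follow from Proposition~\ref{prop:submat_lower}. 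What makes the verification immediate is the stronger sparsity in force throughout the paper (Table~\ref{table} assumes $s=o(\sqrt{n})$; anything with $s\log s=o(n)$ suffices), since then $\tfrac{s}{n}\log s\to 0$, $\log\tfrac{n-s}{n+s}\to 0$, and $\log\tfrac{1}{8\varepsilon_n}\to\infty$ gives $\omega_n\to\infty$. So either invoke that regime explicitly, or record that the corollary's ``$s=o(n)$'' must be read as $s\log s=o(n)$ --- a looseness your argument inherits from the paper's own statement rather than introduces, but which your write-up presents as already settled.
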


The previous results show the near optimality of the procedure proposed in Section~\ref{sec:submat}, in the case when $n_1 ,n_2$ are the same order of magnitude. When either of them is close to $n$ the problem becomes similar to the unstructured case (however, this is not captured by the corollary above).



\section{Final Remarks} \label{sec:conclusion}

In this work we investigated the problem of support estimation of structured sparse signals under the adaptive sensing paradigm. These results broaden our understanding of the fundamental limits of adaptive sensing and also provide a practical method for estimating signal supports. The procedure suggested in this work is rather general and simple, and also turns out to be near-optimal in a variety of interesting cases. It is important to point out that the proposed procedure requires knowledge of some parameters of the problem that might not be available in a real-life setting. Also, neither the fundamental performance limits nor the performance of the proposed procedure are yet fully understood for arbitrary classes of signal support sets. These might prove to be interesting areas for future research.

\appendix

\section*{Appendix}
\begin{proof}[Proof of Proposition~\ref{prop:SLRT}]

To ease notation we write $N\equiv N_{\Gamma}$. The proof of all the statements in the proposition hinges on the derivation of upper bounds for the expected value of the stopping time $N$. Recall the definition of the log-likelihood ratio
$$\bar{z}_k = \displaystyle{\sum_{i=1}^{k}} \log \frac{f_1 (y_i)}{f_0 (y_i)} = \displaystyle{\sum_{i=1}^{k}} z_i \ ,$$
where $z_i=\frac{\Gamma}{2}\mu(2y_i-\mu).$ From Wald's identity \cite{SLRT_Wald_1945} we know that
$$\E (\bar z_N) = \E (N) \E (z_1) \ .$$
Since it is easy to compute $\E (z_1)$ directly, in order to control $\E(N)$ we need to control $\E (\bar z_N)$. Note that ${\E_0 (z_1) < 0 < \E_1 (z_1)}$, thus to get an upper bound on $\E_0(N)$ we need to lower bound $\E_0(\bar{z}_N)$, and to get an upper bound on $\E_1(N)$ we need to upper bound $\E_1(\bar{z}_N)$. In what follows assume $H_0$ is true, as the case for $H_1$ is entirely analogous. Our proof hinges on the following technical lemma.
\begin{lemma}\label{lemma:tech}
\begin{equation}\label{eqn:tech1}
l+\E_0 (z_1 | z_1 \leq 0) \ \leq\  \E_0 (\bar{z}_N | \bar{z}_N \leq l) \ \leq\  l\ ;
\end{equation}
\begin{equation}\label{eqn:tech2}
e^l \E_0\left(\left.e^{z_1}\right|z_1\leq 0\right) \ \leq\  \E_0 \left(\left.e^{\bar{z}_N} \right| \bar{z}_N \leq l\right) \ \leq\  e^l\ ;
\end{equation}
\begin{equation}\label{eqn:tech3}
e^u \ \leq\  \E_0 \left(\left.e^{\bar{z}_N} \right| \bar{z}_N \geq u\right) \ \leq\  e^u \E_0\left(\left.e^{z_1}\right|z_1\geq 0\right)\ .
\end{equation}
\end{lemma}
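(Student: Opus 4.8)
I would prove all three double inequalities of Lemma~\ref{lemma:tech} by the same two-part scheme: in each case one of the two bounds (the one that forces $\bar z_N$ past the crossed boundary) is immediate, while the other follows by conditioning on the last increment and reducing to a one-step comparison of truncated Gaussians. The only distributional input is that under $H_0$ the increment $z_1=\tfrac{\Gamma}{2}\mu(2y_1-\mu)$ is Gaussian --- an affine image of $y_1\sim\normal(0,\Gamma^{-1})$ --- say $z_1\sim\normal(m,\sigma^2)$ with $m=\E_0(z_1)<0$. The easy halves are trivial: on $\{\bar z_N\le l\}$ we have $\bar z_N\le l$, hence $\E_0(\bar z_N\mid\bar z_N\le l)\le l$ and, exponentiating, $\E_0(e^{\bar z_N}\mid\bar z_N\le l)\le e^l$; symmetrically $e^{\bar z_N}\ge e^u$ on $\{\bar z_N\ge u\}$. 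This settles the right inequalities of \eqref{eqn:tech1} and \eqref{eqn:tech2} and the left inequality of \eqref{eqn:tech3}.

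For the remaining halves I would write $\bar z_N=\bar z_{N-1}+z_N$ (with $\bar z_0=0$) and use the exit-time structure. The event $\{N\ge k\}$ is $\mathcal F_{k-1}$-measurable; on it, $\bar z_{k-1}\in(l,u)$ and $z_k$ is an independent copy of $z_1$, and within $\{N\ge k\}$ a downcrossing at step $k$ is precisely $\{z_k\le l-\bar z_{k-1}\}$. Writing $\{\bar z_N\le l\}$ as the disjoint union over $k$ of $\{N\ge k\}\cap\{z_k\le l-\bar z_{k-1}\}$ and conditioning on $\mathcal F_{k-1}$, the step-$k$ contribution is governed by $\E_0\big((w+z_1)\,\1\{w+z_1\le l\}\big)$ (and, for \eqref{eqn:tech2}, by its exponential analogue), with $w:=\bar z_{k-1}\in(l,u)$. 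Setting $w+z_1=l+U$, where $U:=(w-l)+z_1\sim\normal(m+w-l,\sigma^2)$ and $w-l>0$, this contribution is $l+\E(U\mid U\le 0)$, respectively $e^l\,\E(e^{U}\mid U\le 0)$. The crucial point is that $\bar z_{N-1}$ and $z_N$ must be kept coupled: the crude bound ``$\bar z_{N-1}>l$, so $\bar z_N>l+z_N$'' is too lossy, because $z_N$ is pinned to be very negative exactly on the part of $\{\bar z_N\le l\}$ where $\bar z_{N-1}$ is far above $l$.

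The lemma then reduces to a single one-step fact: for $W_a\sim\normal(a,\sigma^2)$ (fixed $\sigma^2$), the law of $W_a$ conditioned on $\{W_a\le 0\}$ is stochastically non-decreasing in $a$, and likewise conditioned on $\{W_a\ge 0\}$. This is the usual log-concavity statement: for $t\le 0$, $\P(W_a\le t\mid W_a\le 0)=\Phi((t-a)/\sigma)/\Phi(-a/\sigma)$ is non-increasing in $a$ since the inverse Mills ratio $\phi/\Phi$ is non-increasing, and the $\{W_a\ge 0\}$ case is symmetric via the monotone Gaussian hazard rate $\phi/(1-\Phi)$. Applying this to the non-decreasing maps $\rho=\mathrm{id}$ and $\rho=\exp$, and noting that $U$ has mean $m+w-l>m$, yields $\E(U\mid U\le 0)\ge\E_0(z_1\mid z_1\le 0)$ and $\E(e^{U}\mid U\le 0)\ge\E_0(e^{z_1}\mid z_1\le 0)$ --- exactly the pointwise bounds behind the left halves of \eqref{eqn:tech1} and \eqref{eqn:tech2} --- while the reflected version ($w<u$, mean $m+w-u<m$) gives the right half of \eqref{eqn:tech3}. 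To conclude, I would multiply each pointwise $\mathcal F_{k-1}$-conditional bound by $\1\{N\ge k\}$, take $\E_0$, and sum over $k$; since the events $\{N=k\}$ partition $\{\bar z_N\le l\}$ (resp.\ $\{\bar z_N\ge u\}$), the conditioning probabilities sum to $\P_0(\bar z_N\le l)$ (resp.\ $\P_0(\bar z_N\ge u)$), and dividing by this factor produces the asserted conditional-expectation inequalities.

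The step I expect to be the main obstacle is the conditioning on the last increment: setting up this strong-Markov/Wald-type decomposition of the stopped walk with the correct measurability, and --- most importantly --- realizing that $\bar z_{N-1}$ and $z_N$ may not be separated, so that the argument channels into the shift-monotonicity of a truncated Gaussian rather than into a (false) comparison of $z_N$ given a downcrossing with $z_1$ given $\{z_1\le 0\}$. Once that reduction is in hand, the remaining one-step inequality is a routine log-concavity fact.
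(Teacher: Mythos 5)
Your proposal is correct and follows essentially the same route as the paper: both arguments reduce the stopped-walk bounds to a one-step shift-monotonicity property of truncated Gaussians by conditioning on the history just before the boundary crossing (your $\mathcal{F}_{k-1}$-decomposition over $k$ is a more explicit version of the paper's tower conditioning on $(N,\bar z_{N-1})$), and then handle the exponential statements by applying the same comparison to the monotone map $\exp$. The only cosmetic difference is that you establish the one-step fact via monotonicity of the Mills/hazard ratios, whereas the paper proves the equivalent shift inequality $\E(\xi-c\mid\xi\le c)\ge\E(\xi\mid\xi\le 0)$ for $c\le 0$ by a single-crossing comparison of the two conditional densities.
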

\begin{proof}
We prove only the first statement, as the proof of the other two statements follow with essentially the same reasoning. First note that for any normal random variable $\xi \sim \mathcal{N} (\nu , \sigma^{2})$ and $c \leq 0$ we have
\begin{equation}\label{eqn:shift}
\E (\xi -c | \xi \leq c) \geq \E (\xi | \xi \leq 0)\ .
\end{equation}
This can be justified by writing the conditional densities of $\xi - c | \xi \leq c$ and $\xi | \xi \leq 0$, respectively
\begin{align*}
f_{\xi -c | \xi \leq c} (x) &= K_1 e^{- \frac{( (x- \nu) +c )^2}{2 \sigma^{2}}} \1 \{ x \leq 0 \} \\
f_{\xi | \xi \leq 0} (x) &= K_2 e^{- \frac{(x- \nu)^2}{2 \sigma^{2}}} \1 \{ x \leq 0 \}\ ,
\end{align*}
where $K_2>K_1>0$ are the appropriate normalization constants. It is easy to show that these densities satisfy
\begin{align*}
f_{\xi -c | \xi \leq c}(x) \leq f_{\xi | \xi \leq 0} (x) &\text{ if } x\leq x_0\\
f_{\xi -c | \xi \leq c}(x) \geq f_{\xi | \xi \leq 0} (x) &\text{ if } x\geq x_0\ ,
\end{align*}
where $x_0$ is simply given by
$$x_0 = \frac{2\sigma^2 \log \frac{K_1}{K_2} -c^2}{2c}+\nu\ .$$
This, in turn implies \eqref{eqn:shift}, as
\begin{align*}
\lefteqn{\E (\xi -c | \xi \leq c) - \E (\xi | \xi \leq 0) = \int x f_{\xi -c | \xi \leq c}(x) \d x - \int x f_{\xi | \xi \leq 0}(x) \d x}\\
& = \int x \left(f_{\xi -c | \xi \leq c}(x)-f_{\xi | \xi \leq 0}(x)\right) \d x - x_0\int f_{\xi -c | \xi \leq c}(x)-f_{\xi | \xi \leq 0}(x) \d x\\
&= \int (x-x_0) \left(f_{\xi -c | \xi \leq c}(x)-f_{\xi | \xi \leq 0}(x)\right) \d x\\
&\geq 0 \ .
\end{align*}

We are now ready to prove the lemma. First note that
$$z_1 = \log \frac{f_1 (y_1)}{f_0 (y_1)} = \Gamma \mu y_1 - \frac{\Gamma \mu^{2}}{2} \stackrel{\smaller H_0}{\sim} \normal\left( -\frac{\Gamma}{2} \mu^{2}, \Gamma \mu^{2} \right)\ .$$
Therefore
\begin{align*}
\E_0\Big(\bar z_N\Big|\bar z_N \leq l\Big) &= \E_0\Big(\E_0\left(\bar z_N\left| N, \bar z_{N-1} , \bar z_N \leq l\right.\right)\Big|\bar z_N \leq l\Big)\\
&= l+\E_0\Big(\E_0\left(z_N -(l-\bar z_{N-1})\left| N, \bar z_{N-1} , z_N\leq l-\bar z_{N-1}\right.\right)\Big|\bar z_N \leq l\Big)\\
&\geq l+\E_0\Big(\E_0\left(z_N \left| N, \bar z_{N-1} , z_N\leq 0\right.\right)\Big|\bar z_N \leq l\Big)\\
&=  l+\E_0\Big(\E_0\left(z_1\left| N, \bar z_{N-1} , z_1\leq 0\right.\right)\Big|\bar z_N \leq l\Big)\\
&=  l+\E_0\Big(z_1\left|z_1\leq 0\right.\Big)\ ,
\end{align*}
where the inequality follows from \eqref{eqn:shift}, concluding the proof of statement \eqref{eqn:tech1}. The other two statements are shown in a similar fashion, by noting also that the exponential function is monotone increasing.
\end{proof}

With the lemma result at hand, note that
\begin{align*}
\E_0 (\bar{z}_N) &= \alpha_{\Gamma} \E_0 (\bar{z}_N | \bar{z}_N \geq u) + (1- \alpha_{\Gamma}) \E_0 (\bar{z}_N | \bar{z}_N \leq l)\\
&\geq  \alpha_{\Gamma} u + (1- \alpha_{\Gamma}) l + (1- \alpha_{\Gamma}) \E_0 (z_1 | z_1 \leq 0) \ ,
\end{align*}
where the last step follows simply from the lemma. Using this together with Wald's inequality yields
\begin{align*}
\Gamma \E_0 (N) & \leq  - \frac{2}{\mu^{2}} \Big( \alpha_{\Gamma} u + (1- \alpha_{\Gamma}) l + (1- \alpha_{\Gamma}) \E_0 (z_1 | z_1 \leq 0) \Big) \\
&= \frac{2}{\mu^{2}} \Big( \alpha_{\Gamma} \log \frac{\alpha}{1-\beta} + (1- \alpha_{\Gamma}) \log \frac{1-\alpha}{\beta} - (1- \alpha_{\Gamma}) \E_0 (z_1 | z_1 \leq 0) \Big) \ .
\end{align*}
So, provided we can show that $\alpha_{\Gamma} \to \alpha$ and $\E_0 (z_1 | z_1 \leq 0) \to 0$ as $\Gamma \to 0$ the statement of the proposition follows, as we obtain the same limit as in \eqref{eqn:prec_lower_0}.

Note first that $\P_0(z_1\leq 0)=\Phi(\mu\sqrt{\Gamma}/2)\to 1/2$ as $\Gamma\to 0$, where $\Phi$ denotes the standard normal cumulative distribution function. Now, since $-|z_1|\leq z_1\1\{z_1\leq 0\}\leq 0$ we conclude that $\E_0(z_1\1\{z_1\leq 0\})\to 0$ when $\Gamma\to 0$, since $\E_0(|z_1|)\leq\sqrt{\E_0(z_1^2)}\to 0$. Therefore $\E_0(z_1|z_1\leq 0)=\E_0(z_1\1\{z_1\leq 0\})/\P_0(z_1\leq 0)\to 0$.

To conclude the proof we need to show that $\alpha_{\Gamma} \to \alpha$ as $\Gamma \to 0$. We can check this using the moment generating function of $\bar{z}_N$. Begin by noting that
\begin{align*}
1&= \E_0 \left(\prod_{i=1}^{N} \frac{f_1 (y_i)}{f_0 (y_i)} \right) = \E_0 (e^{\bar{z}_N}) = (1- \alpha_{\Gamma} ) \E_0 \left(\left.e^{\bar{z}_N} \right| \bar{z}_N \leq l\right) + \alpha_{\Gamma} \E_0 \left(\left.e^{\bar{z}_N} \right| \bar{z}_N \geq u\right)  \ .
\end{align*}
Hence
\begin{equation}\label{eqn:alpha_converges}
\alpha_{\Gamma} = \frac{1 - \E_0 \left(\left.e^{\bar{z}_N} \right| \bar{z}_N \leq l\right)}{\E_0 \left(\left.e^{\bar{z}_N} \right| \bar{z}_N \geq u\right) - \E_0 \left(\left.e^{\bar{z}_N} \right| \bar{z}_N \leq l\right)} \ .
\end{equation}
We can now use the statements \eqref{eqn:tech2} and \eqref{eqn:tech3} of Lemma~\ref{lemma:tech}.

It can be easily shown that $\E_0(e^{z_1}|z_1\leq 0) \to 1$ and $\E_0(e^{z_1}|z_1\geq 0) \to 1$ as $\Gamma \to 0$. Therefore from Lemma~\ref{lemma:tech} we get that
$$\E_0 \left(\left.e^{\bar{z}_N} \right| \bar{z}_N \leq l\right) \to e^l\ ,$$
and
$$\E_0 \left(\left.e^{\bar{z}_N} \right| \bar{z}_N \geq u\right) \to e^u$$
as $\Gamma\to 0$. This, together with \eqref{eqn:alpha_converges} concludes the proof of the first statement of the proposition. The proof of the second statement in entirely analogous.
\end{proof}

\begin{proof}[Proof of Proposition~\ref{prop:prec_symm_classes}]
The maximum of the quantity above is attained when $||b||_1 = m$, so we will assume this in what follows. For a fixed $i \in \{ 1, \dots ,n \}$ let $\C_i = \{ S \in \C : \ i \in S \}$. In case of symmetric classes we have that $|\C_i|=c$ does not depend on $i$. Also note that $c/ |\C| = s/n$. To see the latter consider a random coordinate $J$ which is uniform on $\{ 1, \dots ,n \}$, and a random coordinate $K$, which is selected sequentially: first select $S \in \C$ uniformly at random, then select $i \in S$ uniformly at random. When $\C$ is symmetric the distribution of $J$ and $K$ are the same, and
\[
\frac{1}{n} = \P (J=i) = \P (K=i) = \frac{c}{|\C|} \frac{1}{s}
\]
With this we can write
\begin{align*}
\displaystyle{\min_{S \in \C} \ \sum_{S' \in \C \setminus \{ S \}} \ \sum_{i \in S \triangle S'}} \ b_i & = \displaystyle{\min_{S \in \C} \ \sum_{S' \in \C \setminus \{ S \}}} \left( \displaystyle{\sum_{i \in S \setminus S'}} \ b_i + \displaystyle{\sum_{i \in S' \setminus S}} \ b_i \right) \\
& = \displaystyle{\min_{S \in \C}} \left( \displaystyle{\sum_{i \in S} \ \sum_{S' \in \C \setminus \{ S \}}} \1 \{ i \notin S' \} \ b_i + \displaystyle{\sum_{i \notin S} \ \sum_{S' \in \C \setminus \{ S \}}} \1 \{ i \in S' \} \ b_i \right) \\
& = \displaystyle{\min_{S \in \C}} \left( \displaystyle{\sum_{i \in S}} (|\C| -c) \ b_i + \displaystyle{\sum_{i \notin S}} c \ b_i \right) \\
& = \displaystyle{\min_{S \in \C}} \left( (|\C| -c) \ b_S + c \ (m- b_S ) \right) \\
& = cm + (|\C| - 2c) \ \displaystyle{\min_{S \in \C}} \ b_S \ ,
\end{align*}
where $b_S = \sum_{i \in S} b_i$. However,
\begin{align*}
\displaystyle{\min_{S \in \C}} \ b_S &\leq \frac{1}{|\C|} \displaystyle{\sum_{S \in \C}} \ b_S = \frac{1}{|\C|} \displaystyle{\sum_{S \in \C} \ \sum_{i \in S}} \ b_i \\
& = \frac{1}{|\C|} \displaystyle{\sum_{i=1}^{n}} \1 \{ i \in S \} b_i = \frac{1}{|\C|} c \displaystyle{\sum_{i=1}^{n}} b_i \\
& = \frac{cm}{|\C|} \ .
\end{align*}
Now note that when $b_i = m/n$ for all $i=1,\dots ,n$, we have $b_S = s m /n = cm / |\C|$ for all $S \in \C$.
\end{proof}

\begin{proof}[Proof of Lemma~\ref{lemma:disjoint_stars}]
Consider the following sequence of graphs denoted by $G_0 ,G_1 ,\ldots, G_K$, where $K\in\N$. Let $G_0$ denote the graph with $p$ vertices and no edges. The graphs $G_1,\ldots,G_K$ are obtained recursively by adding a disjoint star of  $s$ to the graph until this is no longer possible. In other words, for $k\in\{1,\ldots,K\}$ the graph $G_k$ is constructed by adding a disjoint $s$-star of $G_{k-1}$. Let $d_k (v)$ denote the degree of $v \in G_k$. Notice that for any $k \in \{0,\ldots,K\}$ if there exists $v \in G_k$ such that $d_k (v) < p-1-s$ we can add a star to $G_k$ centered in $v$. This means that for the index $K$ we have that $d_K (v) \geq p-1-s$ for all $v \in G_K$. Thus the graph $G_K$ has at least $p(p-1-s)/2$ edges and is built entirely of disjoint stars of size $s$. The statement now follows.
\end{proof}

\bibliographystyle{plain}
\bibliography{AS_references}

\end{document}